\setlist[itemize]{noitemsep} 
\numberwithin{equation}{section}
\theoremstyle{plain}
\newtheorem{theorem}{Theorem}[section]
\newtheorem{lemma}[theorem]{Lemma}
\newtheorem{proposition}{Proposition}[section]
\theoremstyle{remark}
\newtheorem{remark}{Remark}
\theoremstyle{remark}\newtheorem{assumption}{Assumption}
\DeclareMathOperator*{\argmin}{arg\,min}
\renewcommand{\d}{\mathrm{d}}
\newcommand{\RR}{\mathbb{R}}
\newcommand{\N}{\mathrm{N}}
\renewcommand{\P}{\mathrm{P}}
\newcommand{\E}{\mathrm{E}}
\newcommand{\NN}{\mathbb{N}}
\newcommand{\ZZ}{\mathbb{Z}}
\newcommand{\LL}{\mathbb{L}}
\newcommand{\HH}{\mathbb{H}}
\newcommand{\bj}{\bm{j}}
\newcommand{\bJ}{\bm{J}}
\newcommand{\by}{\bm{y}}
\newcommand{\bX}{\bm{X}}
\newcommand{\bx}{\bm{x}}
\newcommand{\bu}{\bm{u}}
\newcommand{\bv}{\bm{v}}
\newcommand{\cF}{\mathcal{F}}
\newcommand{\cJ}{\mathcal{J}}
\newcommand{\cL}{\mathcal{L}}
\newcommand{\Ind}{\mathbbm{1}}
\newcommand{\ceil}[1]{\left\lceil #1 \right\rceil}
\newcommand{\abs}[1]{| #1 |}
\newcommand{\pnorm}[2]{\lVert#1\rVert_{#2}}
\newcommand{\trans}{^{\tiny{\mathrm{T}}}}
\begin{document} 

\title{Coverage of Credible Intervals in Bayesian Multivariate Isotonic Regression
\thanks{
The research was supported in part by NSF Grant number DMS-1916419.
}}

\author{Kang Wang \thanks{Department of Statistics, North Carolina State Universtiy. Email: kwang22@ncsu.edu}
\and
Subhashis Ghosal\thanks{Department of Statistics, North Carolina State Universtiy. Email: sghosal@stat.ncsu.edu}}

\date{}

\maketitle
\begin{abstract}
\noindent
    We consider the nonparametric multivariate isotonic regression problem, where the regression function is assumed to be nondecreasing with respect to each predictor. Our goal is to construct a Bayesian credible interval for the function value at a given interior point with assured limiting frequentist coverage. A natural prior on the regression function is given by a random step function with a suitable prior on increasing step-heights, but the resulting posterior distribution is hard to analyze theoretically due to the complicated order restriction on the coefficients. We instead put a prior on unrestricted step-functions, but make inference using the induced posterior measure by an ``immersion map'' from the space of unrestricted functions to that of multivariate monotone functions. This allows maintaining the natural conjugacy for posterior sampling. A natural immersion map to use is a projection with respect to a distance function, but in the present context, a block isotonization map is found to be more useful. The approach of using the induced ``immersion posterior'' measure instead of the original posterior to make inference provides a useful extension of  the Bayesian paradigm, particularly helpful when the model space is restricted by some complex relations. We establish a key weak convergence result for the posterior distribution of the function at a point in terms of some functional of a multi-indexed Gaussian process that leads to an expression for the limiting coverage of the Bayesian credible interval. Analogous to a recent result for univariate monotone functions, we find that the limiting coverage is slightly higher than the credibility, the opposite of a phenomenon observed in smoothing problems. Interestingly, the relation between credibility and limiting coverage does not involve any unknown parameter. Hence by a recalibration procedure, we can get a predetermined asymptotic coverage by choosing a suitable credibility level smaller than the targeted coverage, and thus also shorten the credible intervals.

\vskip 0.3cm

\noindent {\bf Keywords:} Isotonic regression;
Credible intervals;
Limiting coverage;
Gaussian process;
Block isotonization;
Immersion posterior.

\end{abstract}

\section{Introduction}
\label{intro}

Nonparametric inference often involves a regression function or a density function in modeling. 
Commonly, a smoothness assumption on a function of interest is imposed, but in some applications, qualitative information, such as monotonicity, unimodality, and convexity, on the shape of the function may be available. This leads to a control on the complexity of the space of functions analogous to what a smoothness assumption does, allowing convergence without requiring the latter.
Monotonicity is the simplest and the most extensively studied shape restriction, especially in the univariate case.
In regression analysis, this problem is commonly referred to as isotonic regression when the mean function of the response variable is assumed to be nondecreasing.
Starting from the early works on monotone shape restricted problems, such as \cite{ayer1955, brunk1955}, research on non-Bayesian approaches, mainly on the least squares estimator (LSE) and the nonparametric maximum likelihood estimator (MLE), has been fruitful, see \cite{Grenander1956, barlow1972statistical, Groeneboom1985, robertson1988order}. 
Assuming a non-zero derivative, the pointwise asymptotic distribution of the MLE or the LSE 
turned out to be the rescaled Chernoff distribution, that is, the  minimizer of a quadratically drifted standard two-sided Brownian motion \cite{PrakasaRao1969, Brunk1970, wright1981asymptotic, Groeneboom1985}.
The same limiting distribution can also be found in other problems where monotonicity is implied, such as the monotone hazard rate estimation with randomly right-censored observations in survival analysis \cite{huang1994estimating, huang1995hazard}, and many inverse problems, including the current status model and deconvolution problems \cite{groeneboom2014nonparametric}. Global properties of  shape-restricted estimators were also studied extensively \cite{Groeneboom1985, Kulikov2005}. The convergence rates and limiting distributional behaviors of $\LL_p$- and $\LL_{\infty}$-distance between monotone shape-restricted estimators and the true function
was investigated by \cite{durot2007, Durot2012}. Nonasymptotic risk bounds for the LSE under a monotone shape restriction were derived by \cite{zhang2002risk, chatterjee2015risk, bellec2018sharp}. Testing for monotonicity was addressed in \cite{hall2000testing, gijbels2000tests, ghosal2000testing, dumbgen2001multiscale}. 

The Bayesian approach to shape restricted problems was also explored, albeit to a lesser extent.
Neelon and Dunson \cite{neelon2004bayesian} used piecewise linear structures for the regression function, and put monotone restrictions on the priors for the slope values. Cai and Dunson \cite{cai2007bayesian} proposed a linear spline model and added an initial Markov random field prior to the coefficients, and then the monotone constraint was incorporated by considering the relation between the slopes and coefficients. Wang \cite{wang2008bayesian} adopted the free-knot cubic regression spline model, converted the shape restriction to the coefficients, and then projected the unconstrained coefficients with conventional priors to the target set, inducing the constrained priors. Shively {\em et al.} \cite{shively2009bayesian} also used Bayesian splines with constrained normal priors on the coefficients to comply with the monotone shape restriction. 
Lin and Dunson \cite{lin2014bayesian} addressed this problem by using a Gaussian process prior and projected unconstrained posterior samples to the monotone function class by a min-max formula. Such an approach can also be applied in the multivariate case. Chakraborty and Ghosal \cite{chakraborty2020coverage, chakraborty2021convergence,ChakrabortyDensity} also used the idea of projection-posterior, making the investigation of frequentist limiting coverage of credible sets possible. Salomond \cite{Salomond2014monotonicity} used a scale mixture of uniform representation of a nonincreasing density on $[0,\infty)$ and obtained the nearly minimax posterior contraction rate  for both a Dirichlet Process and a finite mixture prior on the mixing distribution. Bayesian tests for monotonicity were developed by \cite{salomond2014adaptive, chakraborty2021convergence,ChakrabortyDensity}. A Bayesian credible interval with assured frequentist coverage for a monotone regression quantile, and an accelerated rate of contraction for it using a two-stage sampling, were obtained by \cite{ChakrabortyRquantile}. 

Multivariate monotone function estimation was also studied in the literature. Non-Bayesian works focused on the construction of the LSE with respect to various partial orderings on the domain; see \cite{barlow1972statistical, robertson1988order}. Only the consistency of the isotonic estimator was known until a recent rise in interest in multivariate shape-restricted problems. In a multivariate isotonic regression model, 
the $\LL_2$-risk of the LSE, respectively for $d=2$ and for a general dimension $d$ was studied by \cite{chatterjee2018matrix,han2019isotonic}. They found that the LSE achieved the optimal minimax rate up to logarithmic factors, and adapted to the parametric rate for a piecewise constant true regression function only when $d\le 2$. 
Han \cite{han2021set} confirmed that the global empirical risk minimizer is indeed rate-optimal in some set structured models even with rapidly diverging entropy integral and thus gave a simpler proof for the optimal convergence rate of the LSE in the multivariate isotonic regression.
Deng and Zhang \cite{deng2020isotonic} investigated a block-estimator proposed by \cite{Fokianos2020} and obtained an $\LL_q$-risk bound. This is minimax rate optimal, and adapts to the parametric rate up to a logarithmic factor when the true regression function is piecewise constant. 
Pointwise  distributional limits for the block-estimator were obtained by \cite{han2020}, which lays the foundation for subsequent inference.

The Bayesian approach to multivariate isotonic regression is much less developed. Saarele and Arjas \cite{saarela2011method} proposed a Bayesian approach to this problem based on marked point processes and resulted in piecewise constant realizations of the regression function. Lin and Dunson \cite{lin2014bayesian} mentioned that the method of projecting Gaussian process posterior samples can also be applied in regression surface case. However, these authors did not study any convergence properties of the Bayesian procedures in higher dimensions. 

The problem of construction of confidence regions in function estimation problems was studied by many authors, mostly in smoothness regimes. For shape restricted problems, confidence regions using limit theory were constructed by 
 \cite{dumbgen2003optimal, dumbgen2004confidence, cai2013adaptive, schmidt2013multiscale}. The natural bootstrap method does not lead to a valid confidence interval for the function value at a point, but a modified bootstrap method works \cite{kosorok2008bootstrapping,sen2010}. Confidence intervals by inverting the acceptance region of a likelihood ratio test for the value of a monotone function at a point were obtained by \cite{Banerjee2001, banerjee2007likelihood, groeneboom2015nonparametric}. This approach has the advantage that no additional nuisance parameters need to be estimated and plugged in the limit distribution. Deng {\em et al.} \cite{deng2020confidence} constructed a confidence interval for multivariate monotone regression from a pivotal limit result for the block-estimator of \cite{deng2020isotonic} relying on the limiting distributional theory by \cite{han2020}.


In this paper, we consider a Bayesian approach to the multivariate monotone regression problem. Our objective is to construct a Bayesian credible interval for the function value at an interior point and obtain its frequentist coverage. As in the univariate problem studied by \cite{chakraborty2020coverage}, we ignore the shape restriction at the prior stage, put a prior on random step-functions without an order restriction, retain posterior conjugacy, and make correction on posterior samples through a map to induce a posterior distribution concentrated on multivariate monotone functions to obtain credible intervals. 
However, unlike in the univariate case, the projection-posterior does not have a limiting distribution for the projection map obtained by minimizing the empirical $\LL_2$-metric since the partial sum process involved the characterization of the empirical  $\LL_2$-projection is not tight in the limit, see \cite{han2020}. 
We also note that the non-Bayesian confidence interval constructed in \cite{deng2020confidence} was also not based on isotonization by distance minimization, but a block max-min procedure. We instead use a map related to the block max-min operation to enforce multivariate monotonicity on posterior samples. As the map immerses a general function into the space of monotone functions, such a map will be referred to as an immersion map, and the induced posterior will be termed an immersion posterior.


The rest of the paper is organized as follows. In the next section, we introduce the notion of an immersion posterior distribution, which will be used to address the problem under study, and is very helpful for similar Bayesian problems with complicated restrictions on the parameter space. In Section \ref{sec:priors}, we introduce the model and assumptions, construct a prior distribution, and describe the posterior distribution used to make inference. Our main results are presented in Section \ref{sec:coverage}. We obtain the immersion posterior procedure, and derive the weak limit of the scaled and centered pointwise immersion posterior distribution. Based on the limit theory, we compute the asymptotic coverage of credible intervals. Numerical results are present in Section \ref{sec:simulation}. We include all the proofs of the main theorems in Section \ref{sec:proof}. Proofs of all auxiliary lemmas and propositions are provided in the supplementary material.

\section{Immersion Posterior}
\label{immersion}

Consider a general statistical model with observation $X\sim P_\theta$, where $\theta\in \Theta_0$. Suppose that the parameter space $\Theta_0$ is a complicated subset of a larger, but simpler to represent, set $\Theta$. This is often the case for shape restricted inference, where structural constraints like monotonicity, convexity, log-concavity, etc. are imposed on a regression function or a density function. In differential equation models, the parameter space is implicitly described as the set of solutions of a system of ordinary or partial differential equations, involving some unknown parameters. In a vector autoregressive process, the set of autoregression coefficients leading to stationary processes may be the parameter space of interest, but it is described by many complicated constraints. Because of the complicated restrictions on $\Theta_0$, a prior for $\theta$ with support on $\Theta_0$ may be hard to construct, and the corresponding posterior may be difficult to compute. More importantly, the corresponding posterior may be hard to analyze from a frequentist orientation. This may be particularly important for studying delicate properties such as the limiting coverage of a Bayesian credible region.

Often, the distribution $P_\theta$ makes sense for any $\theta\in \Theta$, so that $\Theta_0$ can be embedded in $\Theta$ keeping the statistical problem meaningful. For shape restricted models, this becomes the standard nonparametric regression or the density estimation problem. A differential equation model also embeds in a nonparametric regression model. A prior distribution $\Pi$ may be specified on the entire $\Theta$, initially disregarding the restriction of $\theta$ to $\Theta_0$. This is typically a standard problem, and often a conjugate prior distribution can be identified. The resulting posterior distribution $\Pi(\cdot|X)$ thus resides in the whole of $\Theta$, and hence is not appropriate to make inference about $\theta$, which is known to live in $\Theta_0$. The requirement can be met by considering the random measure induced by a mapping $\iota$ from $\Theta$ to $\Theta_0$, in that, we consider the random measure $\Pi^*(B|X)=\Pi (\iota(\theta)\in B|X)$ to make inference on $\theta$. The map $\iota$ immerses $\theta$ into the desirable space $\Theta_0$, and hence will be referred to as the {\it immersion map}. The induced posterior $\Pi^*$ will be referred to as the {\it immersion posterior}. This provides an extension of the Bayesian paradigm since the identity map as the immersion map for the situation $\Theta_0=\Theta$ reduces the immersion posterior to the classical Bayesian posterior.

The approach has been successfully used by several works including \cite{lin2014bayesian, chakraborty2020coverage,chakraborty2021convergence,ChakrabortyDensity,ChakrabortyRquantile} for shape-restricted problems, and by \cite{bhaumik2015bayesian,bhaumik2017efficient,bhaumik2017bayesian, BhaumikPDE} for differential equation models. These authors used a projection map $\mathfrak{p}$ obtained by minimizing a certain distance from the posterior sample to the restricted space, and the resulting induced random measure is called the projection posterior  distribution. The projection map $\mathfrak{p}$ satisfies the appealing property $\mathfrak{p}(\theta)=\theta$ for all $\theta\in\Theta_0$. 

While a projection map with respect to an appropriate distance is a natural choice for an immersion map, the restriction to a projection map is unnecessary for the concept to be used. Depending on the aspect to be studied, there may not be a natural distance associated with it.  This happens, for instance, if we are interested in studying the posterior distribution of the function value at a given point. It is also not necessary for the immersion map $\iota$ to satisfy  $\iota(\theta)=\theta$ for all $\theta\in\Theta_0$. Neither, $\Theta_0$ needs to be a subset of $\Theta$, nor the immersion map needs to be defined all over $\Theta$. All that is needed is that an alternative parameter space $\Theta$ exists where the model distribution $P_\theta$ makes sense, a prior $\Pi$ can be put on $\Theta$ such that the posterior distribution can be computed relatively easily, and the random distribution induced by a map $\iota$ from the support of the posterior distribution $\Pi(\cdot|X)$ to $\Theta_0$ can be analyzed theoretically to establish some desirable properties. In most situations, the family of measures $\{P_\theta: \theta\in \Theta\}$ is dominated, so the support of the posterior distribution $\Pi(\cdot|X)$ is contained in the support of the prior distribution $\Pi$. The immersion map may be allowed to depend on the sample size like a prior distribution may be allowed to depend on the sample size. Even dependence of $\iota$ on the data $X$ may be allowed. Although there is no uniqueness in the choice of the immersion map, the main purpose is to increase flexibility in the posterior measure to achieve a targeted asymptotic frequentist property, such as coverage of a credible region. A choice of an immersion map is therefore guided by a desirable frequentist property. Even if $\Theta_0$ and $\Theta$ coincide, the flexibility of the immersion posterior may be helpful to satisfy a desirable convergence property of the immersion posterior that the classical Bayesian posterior may lack. 

In many applications, the prior distribution may be actually a sequence of prior distributions specified through a sieve indexed by a discrete variable $J=J_n$ depending on the sample size $n$. Let $\Theta_J$ stand for the sieve (typically a finite-dimensional subset of $\Theta$) and $\Pi_J$ stand for the prior at that stage concentrated on $\Theta_J$. Then the computation of the posterior in the unrestricted space reduces to a finite-dimensional computation, often also aided by posterior conjugacy. It is then typical that the immersion map $\iota$ on $\Theta_J$ has the range in $\Theta_0\cap \Theta_J$, so that the computation of the immersion posterior involves finite-dimensional computations only. Most examples from the existing literature, as well as the method used in this paper, fall in this setting.

\section{Notations, Model, Prior, and Posterior Distribution}
\label{sec:priors}

\subsection{Notations} 
We summarize the notations we shall use in this paper. The notations $\RR$, $\NN$, and $\ZZ$ will stand for the real line, the set of natural numbers, and the set of all integers respectively. The positive half-line with and without $0$, and the set of nonnegative integers are respectively denoted by $\RR_{\geq 0}$,$\RR_{>0}$ and, $\ZZ_{\geq 0}$.
Bold Latin or Greek letters will be used to indicate column vectors, and the non-bold font of a letter with a subscript will denote a  coordinate of the corresponding vector. 
For example, $a_i$ is the $i$th coordinate of $\bm{a}\in \RR^d$.
Let $\mathbf{1}$ denote the $d$-dimensional all-one column vector and $\bm{0}$ the all-zero column vector. Let $\bm{A}\trans$ denote the transpose of a matrix or a vector $\bm{A}$. For an arbitrary set $A$, the indicator function will be denoted by $\Ind_A(\cdot)$ and $\#{A}$ will denote the cardinality of a finite set $A$. Let $\lceil a\rceil$ stand for the smallest integer greater than or equal to a real number $a$. The symbol $\lesssim$ will stand for an inequality up to an unimportant constant multiple. 
For two positive real sequences $a_n$ and $b_n$, we also use $a_n \ll b_n$ if $a_n = o(b_n)$.
For $a,b\in\RR$, let $a\wedge b=\min\{a,b\}$ and $a\vee b=\max\{a,b\}$. For $\bm{a}$, $\bm{b}\in\RR^d$, let $\bm{a}\wedge\bm{b}=(a_1\wedge b_1,\ldots, a_d\wedge b_d)\trans$,  $\bm{a}\vee\bm{b}=(a_1\vee b_1,\ldots, a_d\vee b_d)\trans$, and the point-wise product 
$\bm{a}\circ\bm{b}=(a_1b_1,\ldots,a_d b_d)\trans$. For a vector $\bm{a}\in \RR^d$, the Euclidean and the maximum norms are respectively denoted by $\|\bm{a}\|$ and $\pnorm{\bm{a}}{\infty} = \max\{|a_k|: 1\leq k \leq d\}$. 
Let $[\bj_1:\bj_2]=\{ \bj\in\ZZ^d: j_{1,k}\le j_k \le j_{2,k}, \text{ for all }1\le k\le d\}$ stand for the lattice with boundaries $\bj_1,\bj_2\in \ZZ^d$. 

For a multivariate function $f:\RR^d \to \RR$, let $\partial^{l}_k f(\bx) = \partial^l f(\bx)/\partial x_k^l$ for $k\in\{1,\ldots,d\}$ and $l\in\ZZ_{\geq 0}$ at a suitable point $\bx\in\RR^d$. For a multiple index $\bm{l}=(l_1,\dots,l_d)\trans\in\ZZ_{\geq 0}^d$, $\partial^{\bm{l}}=\partial^{l_1}_1\cdots\partial^{l_d}_d$, $
\bm{l}!= l_1!\cdots l_d!$ and $\bx^{\bm{l}}= x_1^{l_1}\cdots x^{l_d}_d$.
We adopt the  coordinate-wise partial ordering on $\RR^d$, that is, $\bx\preceq\by$ if $x_k \leq y_k$ for all $1\leq k \leq d$. We say that a function $f$ on $\RR^d$ is multivariate monotone if $ f(\bx)\leq f(\by)$ for all $\bx \preceq \by$. The class of all multivariate monotone functions on $[0,1]^d$ will be denoted by $\mathcal{M}$. Let $\LL_p[\bm{a},\bm{b}]$, $1\le p\le \infty$, stand for the  
Lebesgue $\LL_p$-space on a multivariate interval $[\bm{a},\bm{b}]$. Convergence in probability under a measure $\P$ is denoted by $\to_{\P}$. Distributional equality will be denoted by $=_d$ and weak convergence by $\rightsquigarrow$.

\subsection{Model}
We observe from  the nonparametric multiple regression model $n$ independent and identically distributed random samples  $\mathbb{D}_n=((\bX_1,Y_1),\ldots,(\bX_n,Y_n))$ 
\begin{align}
\label{model}
Y= f(\bX) + \varepsilon, 
\end{align}
where $Y$ is the response variable, $\bX$ is a $d$-dimensional predictor, and $\varepsilon$ is a random error with mean $0$ and finite variance $\sigma^2$, independent of $\bX$. Instead of assuming any global smoothness condition on $f$, we assume that $f$ is a multivariate monotone function. 
To construct the likelihood function, we assume that $\varepsilon$ is normally distributed, but the actual data generating process need not be so. 

The first assumption is about the local regularity of the true regression function $f_0$ near a point of interest $\bx_0$. This assumption, as in \cite{han2020}, is an essential ingredient to establish the limiting distribution.
\begin{assumption}
\label{assumption_approximation}
Let $f_0\in \mathcal{M}$. For $\bx_0\in (0,1)^d$ and $1\leq k\leq d$, let $\beta_k$ be the order of the first non-zero derivative of $f$ at $\bx_0$ along the $k$-th coordinate, that is, $\beta_k = \min_{l\geq 1}\{l : \partial_k^l f(\bx_0)\neq 0\}$ and  $\beta_k=\infty$ if $\partial^l_k f_0(\bx_0)=0$ for all $l\geq 1$. Without loss of generality, we may assume that $f_0$ depends on its first $s$ arguments locally at $\bx_0$, that is, $1\leq \beta_1,\ldots,\beta_s <\infty$, and that $\beta_{s+1}=\ldots=\beta_d=\infty$ for some $0\leq s\leq d$.
Define an index set $L = \{\bm{l}: 0<\sum_{k=1}^s l_k/\beta_k \leq 1 \text{ and } l_k=0,\text{ for } k=s+1,\ldots, d\}$. 
For a positive sequence $\omega_n\downarrow 0$, set $\bm{r}_n=(\omega_n^{1/\beta_1}, \ldots, \omega_n^{1/\beta_s}, 1,\ldots,1)\trans$.
For any $t> 0$,
\begin{align}
\label{eqn:approx_rate}
    \lim_{\omega_n\downarrow 0} \omega_n^{-1} \sup_{\substack{ \bx\in [0,1]^d, \\ \abs{x_k-x_{0,k}}\leq t r_{n,k}, \\ 1\leq k \leq d }} \big|{f_0(\bx)-f_0(\bx_0)-\sum_{\bm{l}\in L} \frac{\partial^{\bm{l}}f_0(\bx_0)}{\bm{l}!} (\bx-\bx_0)^{\bm{l}} }\big|=0.
\end{align}
\end{assumption}
Assumption \ref{assumption_approximation} considers different convergence rates along different coordinates according to the smoothness levels. All terms in the expansion contribute to approximation rates larger than or equal to $\omega_n$. 
Let 
\begin{align}
\label{L0}
L_0 &=\{ \bm{l}: 0<\sum_{k=1}^s l_k/\beta_k < 1 \text{ and } l_k=0\text{ for } k=s+1,\ldots, d\},\\
\label{L*}
 L^{\ast} &=\{ \bm{l}: \sum_{k=1}^s l_k/\beta_k = 1 \text{ and } l_k=0\text{ for } k=s+1,\ldots, d\}. 
\end{align}
Under Assumption \ref{assumption_approximation}, a unique feature for functions in  $\mathcal{M}$ is that the derivatives of order $\bm{l} \in L_0$ are zero (see Lemma 1 of \cite{han2020}). Only those terms corresponding to the index set $L^{\ast}$ can be nonzero. Thus, the nonzero terms in the expansion of \eqref{eqn:approx_rate} contribute the same approximation rate $\omega_n$. 
However, Assumption \ref{assumption_approximation} cannot eliminate the nonzero mixed derivatives. Additional assumptions will be needed when we want to exclude the mixed derivative terms.

Next, we make the following assumption on the distribution of the covariate $\bX$ and that of the error $\varepsilon$ from the data generating process \eqref{model}. 

\begin{assumption}
\label{assumption_data}
The covariate $\bX$ has a density $g$ such that $a_1\leq g(\bx)\leq a_2$ for all $\bx \in [0,1]^d$ and some $0<a_1\le a_2<\infty$. 
Suppose $g$ is continuous in a neighborhood of the set $\{(x_{0,1},\ldots, x_{0,s}, x_{s+1}, \ldots, x_d): x_k\in [0,1] \text{ for }s+1\le k \le d \}$.
The random error $\varepsilon$, with mean $0$ and variance $\sigma_0^2$, has a finite $2(\sum_{k=1}^s\beta_k^{-1}+1)$-th moment.
\end{assumption}

\subsection{Prior}

We put a prior distribution on $f$ through a sieve of piecewise constant functions with gradually refining intervals of constancy, forming a partition of $[0,1]^d$. For $\bJ \in \ZZ_{>0}^d$, let $I_{\bj}=\prod_{k=1}^d ((j_k-1)/J_k, j_k/J_k]$ be a hyperrectangle in $[0,1]^d$, indexed by a $d$-dimensional vector $\bj$, for $\bj\in[\bm{1}:\bJ] \backslash \{\bm{1}\}$ and $I_{\bm{1}}=\prod_{k=1}^d [0, 1/J_k]$. Then $\{ I_{\bj} \}_{\bj\in [\bm{1}:\bJ]}$ forms a partition of $[0,1]^d$. 
We define a class of piecewise constant functions $\mathcal{K}_{\bJ} := \{f=\sum_{\bj\in [\mathbf{1}:\bJ]} \theta_{\bj}\Ind_{I_{\bj}}: \theta_{\bj}\in \RR\}$. As we follow the immersion posterior approach, we do not initially impose the order restriction.  A prior is imposed on $f=\sum_{\bm{j}\in [\bm{1}:\bm{J}]} \theta_{\bm{j}} \Ind_{I_{\bm{j}}}$ in $\mathcal{K}_{\bJ}$ by giving independent Gaussian priors to $\theta_{\bj}$, namely, 
\begin{equation}
\label{eq:prior}
\theta_{\bj}\sim \N(\zeta_{\bj}, \sigma^2 \lambda_{\bj}^2), \quad \mbox{ independently for all }\bj\in [\bm{1}:\bJ], 
\end{equation}
where $\max_{\bj} |{\zeta_{\bj}}| < \infty$ and $ \min_{\bj}\lambda^2_{\bj} \ge b > 0$.

The values of the prior parameters, $\zeta_{\bj}$ and $\lambda_{\bj}$, will not affect our asymptotic results. However, in practice, when very little prior information is available, it is sensible to choose $\zeta_{\bj}=0$ and $\lambda_{\bj}$ large for all $\bj$. 

\subsection{Posterior distribution}

We use the Gaussian distribution 
\begin{equation}
	\label{working model}
Y_i\sim \mathrm{N}\big(\sum_{\bj\in [\bm{1}:\bm{J}]} \theta_{\bj} \Ind\{\bX_i\in I_{\bm{j}}\}, \sigma^2\big),
	\end{equation}
which leads to, in the unrestricted parameter space, a Gaussian joint likelihood for $(\theta_{\bj}: \bj\in [\mathbf{1}:\bJ])$ without any cross-product terms in the exponent. This gives independent Gaussian posterior distribution for each $\theta_{\bj}$, given $\sigma$, such that by conjugacy, 
 \begin{equation}
 \label{eq:unrestricted posterior}
\theta_{\bj}| \mathbb{D}_n, \sigma \sim \N((N_{\bj}\bar{Y}|_{I_{\bj}}  + \zeta_{\bj}\lambda_{\bj}^{-2})/(N_{\bj}+ \lambda^{-2}_{\bj}), \sigma^2 /(N_{\bj}+\lambda_{\bj}^{-2})), 
 \end{equation}
where $N_{\bj}=\#{\{i:\bX_i\in I_{\bj}\}}$ and $\bar{Y}|_{I_{\bj}} = \sum_{i=1}^n Y_i \Ind\{\bX_{i}\in I_{\bj}\}/N_{\bj}$. 

The parameter $\sigma^2$ can be estimated by maximizing the marginal likelihood function given by
\begin{multline*}
    (2\pi\sigma^2)^{-n/2} \prod_{\bj\in[\bm{1}:\bJ]}(1+\lambda^2_{\bj}N_{\bj})^{-1/2}\\ 
    \times \exp \big[-\frac{1}{2\sigma^2}\big\{ \sum_{i=1}^n \big(Y_i - \sum_{\bj:\bX_i\in I_{\bj}}\zeta_{\bj}\big)^2 - \sum_{\bj\in[\bm{1}:\bJ]}\frac{N_{\bj}^2 (\bar{Y}|_{I_{\bj}} - \zeta_{\bj})^2}{N_{\bj}+\lambda_{\bj}^{-2}} \big\}\big],
\end{multline*}
and the resulting estimator 
\begin{align}\label{sighat mmle}
	\hat{\sigma}_n^2=
	\frac{1}{n}\big[\sum_{i=1}^n \big(Y_i - \sum_{\bj:\bX_i\in I_{\bj}}\zeta_{\bj}\big)^2 - \sum_{\bj\in[\bm{1}:\bJ]}\frac{N_{\bj}^2 (\bar{Y}|_{I_{\bj}} - \zeta_{\bj})^2}{N_{\bj}+\lambda_{\bj}^{-2}} \big],
\end{align}
may be 
plugged in the expression \eqref{eq:unrestricted posterior}. 
Alternatively, in a fully Bayesian framework, we can give $\sigma^2$ an Inverse-Gamma prior $\mathrm{IG}(b_1,b_2)$ with parameters $b_1>0$, $b_2>0$, and obtain that the posterior distribution of $\sigma^2$ is given by $\mathrm{IG}(b_1+n/2,b_2+n\hat{\sigma}^2_n/2)$. It will be shown in Lemma B5 of \cite{kang_supp} that the marginal maximum likelihood estimator of $\sigma^2$ as well as the posterior for $\sigma^2$ concentrate in the neighborhood of its true value $\sigma_0^2$. Then it easily follows that the asymptotic behavior of the posterior distribution of $f$ is identical with that when $\sigma$ is known to be $\sigma_0$. Hence it suffices to study the asymptotic behavior of the posterior distribution given $\sigma$. 

The unrestricted posterior distribution of $f$ given $\sigma$ is induced from \eqref{eq:unrestricted posterior} by the representation $f=\sum_{\bj\in [\mathbf{1}:\bJ]} \theta_{\bj}\Ind_{I_{\bj}}$. To obtain the immersion posterior distribution to make inference, we consider three possible immersion maps.

Define 
\begin{align}
\label{sieve}
\mathcal{M}_{\bJ} = \big\{ f=\sum_{\bj\in [\mathbf{1}:\bJ]} \theta_{\bj}\Ind_{I_{\bj}}: \theta_{\bj}\in \RR \text{ and } \theta_{\bj_1}\leq \theta_{\bj_2}  \text{ if } \bj_1\preceq \bj_2 \big\},
\end{align}
consisting of the coordinatewise nondecreasing functions taking constant values on every $I_{\bj}$. 

Based on the isotonization procedure introduced in \cite{Fokianos2020}, consider transformations $\underline{\iota}$ and  $\overline{\iota}$ acting on $f=\sum_{\bj\in [\mathbf{1}:\bJ]} \theta_{\bj}\Ind_{I_{\bj}} \in \mathcal{K}_{\bJ}$ mapping to an element of $\mathcal{M}_{\bJ}$ defined by 
\begin{align}
\label{maximin}
\underline{\iota}(f)(\bx) = \max_{\bj_1\preceq \bj_0(\bx)} \min_{ \substack{\bj_0(\bx) \preceq \bj_{2} \\ N_{[\bj_1:\bj_2]}>0}} \frac{\sum_{\bj\in [\bj_{1}:\bj_{2}]} N_{\bj} \theta_{\bj}}{N_{[\bj_{1}:\bj_{2}]}},\\
\label{minimax}
\overline{\iota}(f)(\bx) = \min_{\bj_0(\bx)\preceq \bj_2} \max_{ \substack{\bj_1 \preceq \bj_0(\bx) \\ N_{[\bj_1:\bj_2]}>0}} \frac{\sum_{\bj\in [\bj_{1}:\bj_{2}]} N_{\bj} \theta_{\bj}}{N_{[\bj_{1}:\bj_{2}]}},
\end{align}
where $\bj_0(\bx)=\ceil{\bx\circ \bJ}$, $N_{[\bj_1:\bj_2]}=\sum_{\bj\in [\bj_1 : \bj_2]} N_{\bj}$,  and $\bx\in [0,1]^d$, for $\bj_1, \bj_2$ in $\ZZ^d$. 
The immersion posterior for inference may be induced by $\iota$ taken to be equal to $\underline{\iota}$ or $\overline{\iota}$ by looking at the induced distribution of 
\begin{align}
    f_* & =  \underline{\iota}(f), \label{immepost1}\\
    f^* & = \overline{\iota}(f). \label{immepost2}
\end{align}
It is obvious that $\iota(f)\in \mathcal{M}_{\bJ}$ and $\iota(f)=f$ if $f\in\mathcal{M}_{\bJ}$ and $N_{\bj}>0$ for all $\bj\in[\mathbf{1}:\bJ]$. Generally, $\underline{\iota}(f)(\bx)\leq \overline{\iota}(f)(\bx)$ for any $\bx \in [0,1]^d$, but this may fail to hold if $N_{\bj}=0$ for some $\bj$, see \cite{deng2020confidence}. To neutralize the effect caused by the order of minimization and maximization, we consider using the average of $\underline{\iota}$ and $\overline{\iota}$. This leads to  another immersion map $\iota=(\underline{\iota}+\overline{\iota})/2$, where $f$ is mapped to 
\begin{align}
    \tilde{f}= (f_* + f^*)/2. \label{immepost3}
\end{align}

The projection map for the univariate case is typically computed by the pool adjacent violator algorithm (cf. Section 2.3 of \citep{barlow1972statistical}), which requires $O(J)$ computations for a function with $J$ steps. The computation of $f_*$ or $f^*$ requires no more than $(\prod_{k=1}^d J_k)^3$ operations by the brute-force search.

\subsection{Effect of the immersion map} 
To see the effect of the immersion map on the posterior distribution of the function value at a point $\bx_0=(0.5, 0.5)\in [0,1]^2$, we conduct a small simulation study and compare the unrestricted and immersion posterior density for a randomly generated sample of three different sizes $n= 100, 200, 500$, and three different regression functions: (i)  $f_0(x_1, x_2) = x_1 + x_2$; (ii) $f_0(x_1, x_2) = \sqrt{x_1 + x_2}$; (iii) $f_0(x_1, x_2) = \Ind\{x_1 < 1/3\} + 2\Ind\{1/3 \leq x_1 < 2/3\} + 3\Ind\{x_1 \geq 2/3\}$. 
We let $X_1$ and $X_2$ be distributed independently and uniformly on $[0,1]$ and error $\varepsilon\sim \N(0, \sigma^2)$ with true value of $\sigma$ to be $0.1$. We choose the number of grid points $J_1=J_2=J=\lceil n^{1/4}\log_{10}n\rceil$. The random heights, $\{\theta_{(j_1, j_2)}: j_1, j_2 \le J\}$, are endowed with the independent Gaussian prior $\N(0, 1000\sigma^2)$. $\sigma^2$ is estimated using the maximum marginal likelihood method. We plot the unrestricted posterior density and the estimated immersion posterior density based on 2,000 posterior samples transformed by the immersion map $(\bar{\iota}+\underline{\iota})/2$ on the same graph.

\begin{figure}
	\centering
	\begin{subfigure}{.32\textwidth}
		\centering
		\includegraphics[width=\textwidth]{./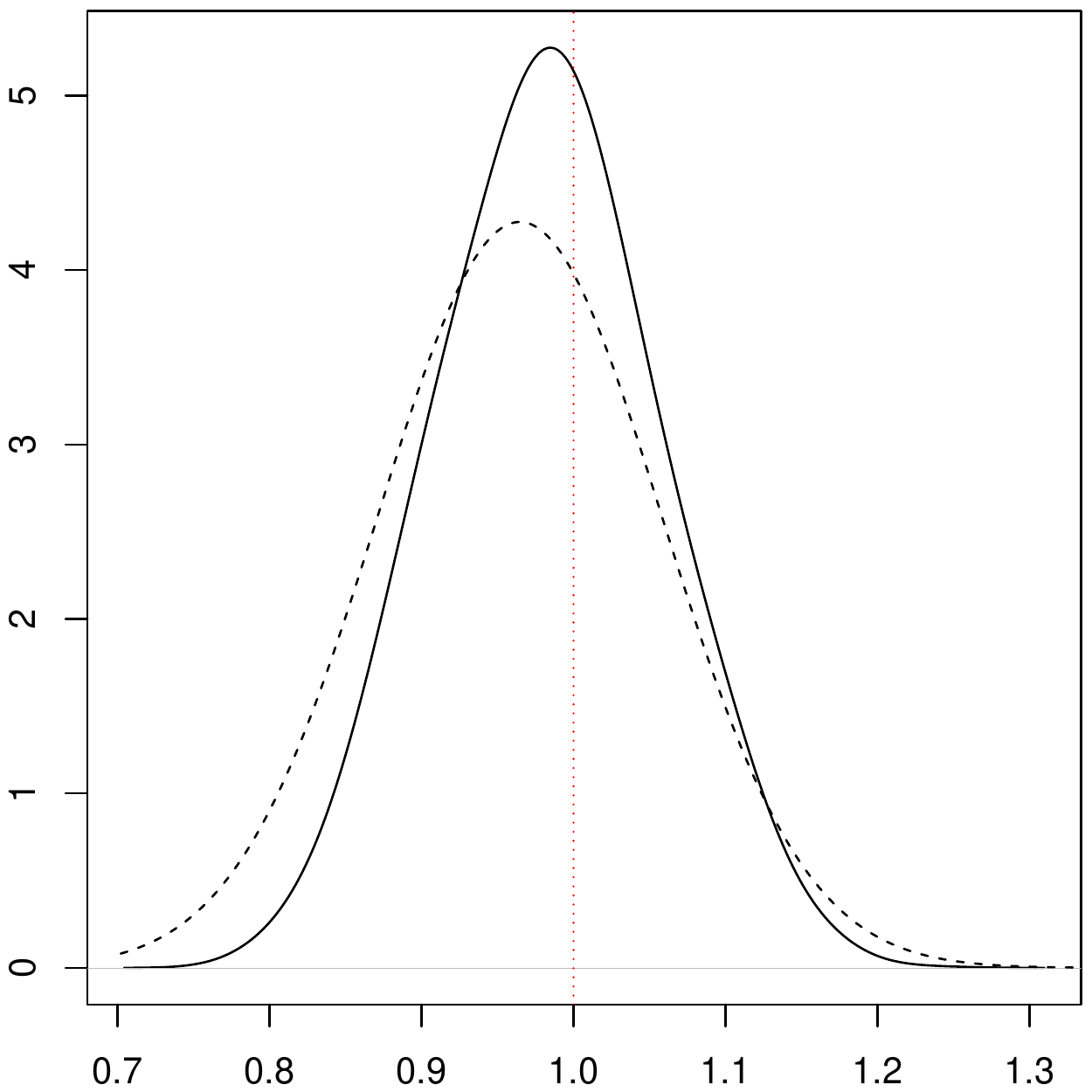}
	\end{subfigure}
	\hfill
	\begin{subfigure}{.32\textwidth}
		\centering
		\includegraphics[width=\textwidth]{./pics/n200fty1.pdf}
	\end{subfigure}
	\hfill
	\begin{subfigure}{.32\textwidth}
		\centering
		\includegraphics[width=\textwidth]{./pics/n200fty1.pdf}
	\end{subfigure}
	
	\begin{subfigure}{.32\textwidth}
		\centering
		\includegraphics[width=\textwidth]{./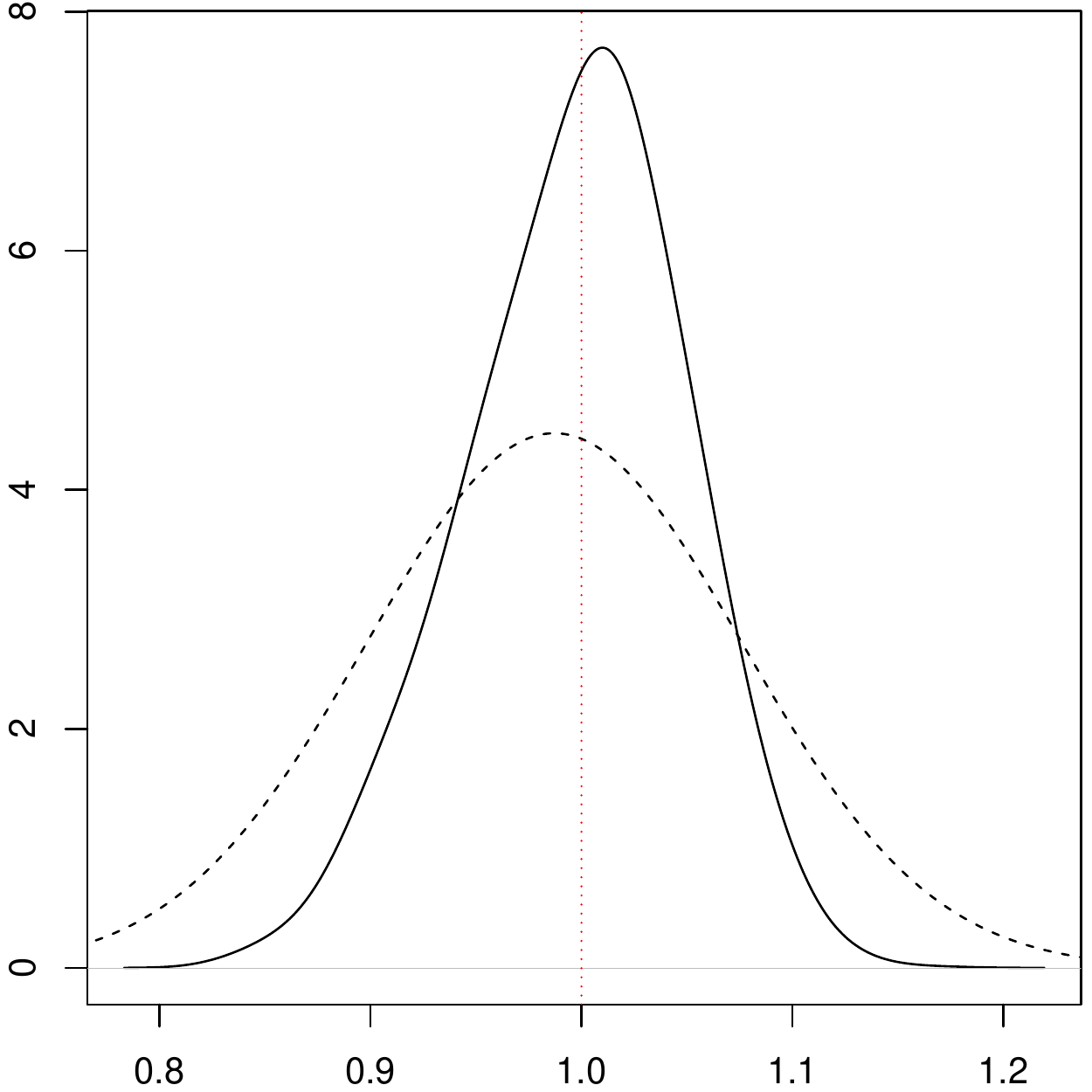}
	\end{subfigure}
	\hfill
	\begin{subfigure}{.32\textwidth}
		\centering
		\includegraphics[width=\textwidth]{./pics/n200fty7.pdf}
	\end{subfigure}
	\hfill
	\begin{subfigure}{.32\textwidth}
		\centering
		\includegraphics[width=\textwidth]{./pics/n200fty7.pdf}
	\end{subfigure}


	\begin{subfigure}{.32\textwidth}
		\centering
		\includegraphics[width=\textwidth]{./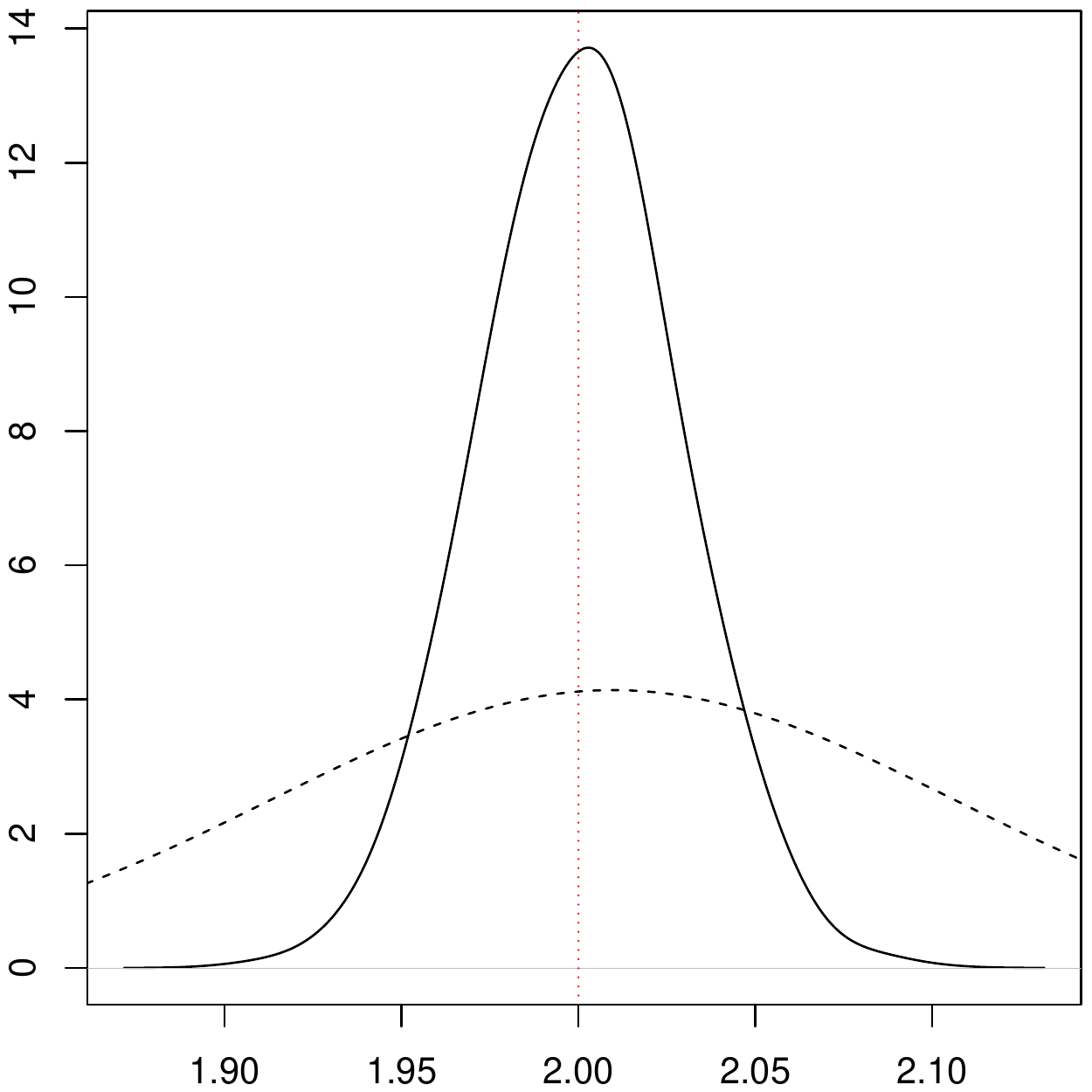}
		\caption*{$n=100$}
	\end{subfigure}
	\hfill
	\begin{subfigure}{.32\textwidth}
		\centering
		\includegraphics[width=\textwidth]{./pics/n200fty9.pdf}
		\caption*{$n=200$}
	\end{subfigure}
	\hfill
	\begin{subfigure}{.32\textwidth}
		\centering
		\includegraphics[width=\textwidth]{./pics/n200fty9.pdf}
		\caption*{$n=500$}
	\end{subfigure}
	\caption{Unrestricted and immersion posterior density functions of $f(\bm{x}_0)$.} The solid black line stands for the immersion posterior density. The black dash line stands for the unrestricted posterior density. We mark the true function value by the red dotted vertical line. The rows correspond to functions (i), (ii), and (iii) respectively, and the columns correspond to sample sizes $n=100, 200$, and $500$.
	\label{fig:postdens}
\end{figure}

We can see from Figure \ref{fig:postdens} that the immersion posterior density functions possess smaller variance across all the instances, although to different extents for different true regression functions and sample sizes, and  the immersion posterior modes are closer to the true value. The effects of the immersion maps $\bar{\iota}$ and $\underline{\iota}$ on the posterior were also found to be similar, and are not reported here.

\section{Coverage of Credible Intervals}
\label{sec:coverage}

Let $\bx_0\in (0,1)^d$ be fixed, and suppose that we want to make inference on $f(\bx_0)$. For a given $0<\gamma<1$, consider a $(1-\gamma)$-credible interval with endpoints the $\gamma/2$ and $(1-\gamma/2)$ quantiles of $f_*(\bx_0)$, $f^*(\bx_0)$, or $\tilde{f}(\bx_0)$ defined in \eqref{immepost1}--\eqref{immepost3}. 
To obtain the limiting frequentist coverage of these credible intervals, we obtain the weak limit of the immersion posterior distributions of $f$ for all three immersion maps $\underline\iota$, $\overline\iota$ and $(\underline\iota+\overline\iota)/2$ at $\bx=\bx_0$. 

Let $H_1$ and $H_2$ be two independent centered Gaussian processes indexed by $(\bm{u},\bm{v})\in\RR_{\geq 0}^d\times \RR_{\geq 0}^d$ with the covariance kernel 
\begin{align}
\label{H covariance}
\prod_{k=1}^s  (u_k\wedge u'_k + v_k\wedge v'_k)D_s(\bm{u}\wedge \bm{u}', \bm{v}\wedge \bm{v}'),
\end{align}
where $D_d(\bm{u},\bm{v})=g(\bx_0)$, where $g$ is the probability density function of $\bX$, and for $s=0,\ldots,d-1$, and 
$D_s(\bm{u},\bm{v})$ is given by 
\begin{align}
\int_{\substack{ x_k\in[(\bx_0-\bm{u})_k,(\bx_0+\bm{v})_k]\cap[0,1] \\ s+1\leq  k \leq d}}
g(x_{0,1},\ldots,x_{0,s}, x_{s+1},\ldots,x_d)\mathrm{d} x_{s+1}\cdots\mathrm{d}x_{d}.
\end{align}
Further, we define a Gaussian process 
\begin{align}
\label{U process}
U (\bm{u},\bm{v}) & =\frac{\sigma_0 H_1(\bm{u},\bm{v})}{\prod_{k=1}^s(u_k+v_k)D_s(\bm{u},\bm{v})}+\frac{ \sigma_0 H_2(\bm{u},\bm{v})}{\prod_{k=1}^s(u_k+v_k)D_s(\bm{u},\bm{v})}\\
& \qquad 
+\sum_{\bm{l}\in L^*} \frac{\partial^{\bm{l}}f_0(x_0)}{(\bm{l}+\bm{1})!} 
\prod_{k=1}^s \frac{v_k^{l_k+1}-(-u_k)^{l_k+1}}{u_k+v_k}
\nonumber
\end{align}
indexed by $(\bm{u},\bm{v})\in\RR_{\geq 0}^d\times \RR_{\geq 0}^d$,
and its functionals  
\begin{align}
Z_* =\sup_{\substack{\bm{u}\succeq \bm{0} \\ u_k \leq x_{0,k} \\ s+1\leq k \leq d}}\inf_{\substack{\bm{v} \succeq\bm{0}\\  v_k \leq 1- x_{0,k} \\ s+1 \leq k \leq d}} U(\bm{u},\bm{v}),\qquad 
Z^* =\inf_{\substack{\bm{v} \succeq\bm{0}\\  v_k \leq 1- x_{0,k} \\ s+1 \leq k \leq d}}\sup_{\substack{\bm{u}\succeq \bm{0} \\ u_k \leq x_{0,k} \\ s+1\leq k \leq d}} U(\bm{u},\bm{v}).
\end{align}

The following result describes the asymptotic behavior of the normalized immersion posterior distributions of $f(\bx_0)$. Recall that $\mathbb{D}_n$ represents the data and $r_{n,k}$ in Assumption \ref{assumption_approximation} is the convergence rate along the $k$-th direction through adjusting the overall rate $\omega_n$ according to the local smoothness levels. 
The weak limit of the normalized immersion posterior distribution function plays a central role in the study of the limiting coverage of the credible intervals based on the immersion posterior quantiles. 

\begin{theorem}
\label{thm: posterior_distribution} 
Let $\omega_n=n^{-{1}/{(2+\sum_{k=1}^{s}\beta_k^{-1})}}$ and let  $\bm{r}_n=(\omega_n^{1/\beta_1},\ldots,\omega_s^{1/\beta_s},1,\ldots,1)\trans$. Suppose that $\bJ$ satisfies $J_k\gg r_{n,k}^{-1}$, for each $k=1,\ldots,d$, and  $\prod_{k=1}^d J_k \ll n\omega_n$. Under Assumptions \ref{assumption_approximation} and \ref{assumption_data}, for any $z\in\RR$, we have
\begin{align}
        \Pi(\omega_n^{-1}(f_*(\bx_0)-f_0(\bx_0))\leq z|\mathbb{D}_n)
        &\rightsquigarrow \mathrm{P}(Z_*
         \leq z |H_1);\\
        \Pi(\omega_n^{-1}(f^*(\bx_0)-f_0(\bx_0))\leq z|\mathbb{D}_n)
        &\rightsquigarrow \mathrm{P}(Z^* 
        \leq z |H_1);\\
        \Pi(\omega_n^{-1}(\tilde{f}(\bx_0)-f_0(\bx_0))\leq z|\mathbb{D}_n) & \rightsquigarrow 
        \mathrm{P}((Z_*+Z^*)/2 \leq z |H_1).
\end{align}
Furthermore, for any $(z_1, z_2)\in \RR^2$, 
\begin{align}
& \Pi\big(\omega_n^{-1}(f_*(\bx_0)-f_0(\bx_0))\leq z_1,\omega_n^{-1}(f^*(\bx_0)-f_0(\bx_0))\leq z_2|\mathbb{D}_n \big)\nonumber\\
& \qquad \qquad \qquad 
 \rightsquigarrow  \mathrm{P}(Z_*\leq z_1, Z^*\leq z_2|H_1).
\end{align}
\end{theorem}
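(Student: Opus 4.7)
The plan is to rewrite each normalized immersion posterior quantity $\omega_n^{-1}(f_*(\bx_0)-f_0(\bx_0))$, $\omega_n^{-1}(f^*(\bx_0)-f_0(\bx_0))$, and $\omega_n^{-1}(\tilde f(\bx_0)-f_0(\bx_0))$ as the sup-inf, inf-sup, or average of a lattice-indexed random process $U_n(\bm u,\bm v)$; to establish joint weak convergence of $U_n$ (together with its data-dependent part) to the limit process $U$ in \eqref{U process} on compact $(\bm u,\bm v)$-sets; and to conclude via a sup-inf continuous mapping theorem. The joint convergence of $(f_*(\bx_0),f^*(\bx_0))$ drops out of the same process-level result because $Z_*$ and $Z^*$ are both continuous functionals of the single process $U$.

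\noindent\textbf{Decomposition of block averages.} Using \eqref{eq:unrestricted posterior}, I would write $\theta_{\bj}=\bar Y|_{I_{\bj}}+\sigma W_{\bj}/\sqrt{N_{\bj}+\lambda_{\bj}^{-2}}+o_{\mathrm P}(\omega_n)$ uniformly in $\bj$, where the prior-mean residual is absorbed into the $o_{\mathrm P}$ term since $\max_{\bj}|\zeta_{\bj}|<\infty$, and $\{W_{\bj}\}$ are i.i.d.\ standard normal variables, independent of $\mathbb{D}_n$ by construction of the prior. The block-weighted average in \eqref{maximin}--\eqref{minimax}, centered at $f_0(\bx_0)$, then decomposes as
\begin{align*}
\frac{\sum_{\bj\in[\bj_1:\bj_2]}N_{\bj}\theta_{\bj}}{N_{[\bj_1:\bj_2]}}-f_0(\bx_0)=A_n+B_n+C_n+o_{\mathrm P}(\omega_n),
\end{align*}
with $A_n$ the bias $N_{[\bj_1:\bj_2]}^{-1}\sum_{\bj\in[\bj_1:\bj_2]}\sum_{i:\bX_i\in I_{\bj}}(f_0(\bX_i)-f_0(\bx_0))$, $B_n$ the data noise $N_{[\bj_1:\bj_2]}^{-1}\sum_{\bj\in[\bj_1:\bj_2]}\sum_{i:\bX_i\in I_{\bj}}\varepsilon_i$, and $C_n$ the posterior noise $N_{[\bj_1:\bj_2]}^{-1}\sigma\sum_{\bj\in[\bj_1:\bj_2]}N_{\bj}W_{\bj}/\sqrt{N_{\bj}+\lambda_{\bj}^{-2}}$. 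Indexing $(\bj_1,\bj_2)$ by $(\bm u,\bm v)\in\RR_{\geq 0}^d\times\RR_{\geq 0}^d$ through the rescaling $\bj_0-\bj_1\asymp\bm u\circ\bm r_n\circ\bJ$ and $\bj_2-\bj_0\asymp\bm v\circ\bm r_n\circ\bJ$ (coordinates $k>s$ taking their full range), and scaling by $\omega_n^{-1}$, Assumption \ref{assumption_approximation} identifies the limit of $\omega_n^{-1}A_n$ as the deterministic drift in \eqref{U process}, while a lattice-indexed central limit theorem under Assumption \ref{assumption_data} (together with the convergence of $N_{[\bj_1:\bj_2]}/n$ to the volume governed by $D_s$) yields the limits of $\omega_n^{-1}B_n$ and $\omega_n^{-1}C_n$ as the $H_1$- and $H_2$-summands of $U$, respectively. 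Their independence in the limit reflects the pre-asymptotic independence of $\{W_{\bj}\}$ from $\mathbb{D}_n$.

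\noindent\textbf{Sup-inf continuous mapping.} With the joint functional convergence of $\omega_n^{-1}(A_n+B_n+C_n)$ to $U$ on compact sets established, I would invoke a sup-inf continuous mapping theorem in the spirit of Lemma 3 of \cite{han2020} and the arguments of \cite{deng2020confidence} to pass to the limits for $Z_{*,n}$, $Z^*_n$, their average, and the joint pair. Tightness of the argmax-argmin is ensured because the drift term in $U$ grows without bound as $\|(\bm u,\bm v)\|\to\infty$ while the Gaussian contributions remain stochastically bounded, so the outer extrema concentrate on a compact region with probability tending to one, uniformly in $n$. The regime $J_k\gg r_{n,k}^{-1}$ provides enough grid points to resolve the limit on the rescaled scale, and $\prod_k J_k\ll n\omega_n$ controls the accumulation of prior-induced variance across bins; by Lemma B5 of \cite{kang_supp} one may take $\sigma=\sigma_0$ throughout.

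\noindent\textbf{Main obstacle.} The hard part is rigorously justifying the conditional convergence $\Pi(\cdot\,|\,\mathbb{D}_n)\rightsquigarrow\mathrm P(\cdot\,|\,H_1)$, which is a weak convergence of random probabilities, the former a function of the data and the latter a function of $H_1$. Since $\{W_{\bj}\}$ is independent of $\mathbb{D}_n$, the posterior probability $\Pi(Z_{*,n}\leq z\,|\,\mathbb{D}_n)$ equals the expectation over $C_n^*=\omega_n^{-1}C_n$ of an indicator depending on the pair $(\omega_n^{-1}(A_n+B_n),C_n^*)$. I would then combine: (i) joint distributional convergence of this pair to $(\text{drift}+\sigma_0 H_1/[\prod_k(u_k+v_k)D_s],\sigma_0 H_2/[\prod_k(u_k+v_k)D_s])$, with $H_2$ independent of $H_1$; (ii) continuity of the conditional law of $Z_*$ in the first argument, which holds because the non-degenerate Gaussian field $H_2$ gives $Z_*$ a continuous distribution at each fixed $H_1$; and (iii) a Skorokhod representation to convert the distributional convergence of the data part into almost-sure convergence along a subsequence, so that the bounded-function expectations (the posterior probabilities) converge almost surely to $\mathrm P(Z_*\leq z\,|\,H_1)$, yielding the claimed weak convergence. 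The same argument, applied to the bivariate indicator $\mathbf 1\{Z_{*,n}\leq z_1,Z^*_n\leq z_2\}$ as a functional of the common limit process, delivers the joint statement.
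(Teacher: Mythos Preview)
Your decomposition and the overall architecture---rewrite the block average as drift $+$ data noise $+$ posterior noise, establish process-level convergence on compacts, then apply a sup--inf continuous mapping---match the paper's route. The paper's $A_n, A'_n, B_n$ in (6.3)--(6.5) are your $C_n$, the prior-mean residual, and your $A_n+B_n$, respectively, and the paper also proves tightness of the posterior-noise partial sums via a multi-index martingale maximal inequality (Lemma~6.2) rather than a generic lattice CLT.

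There is, however, a genuine gap in your localization step. You argue that the optimizing $(\bm u^*,\bm v^*)$ stays in a compact set because ``the drift term in $U$ grows without bound as $\|(\bm u,\bm v)\|\to\infty$ while the Gaussian contributions remain stochastically bounded.'' The drift does control the \emph{upper} truncation, but it does nothing for the \emph{lower} one: the Gaussian pieces $H_i(\bm u,\bm v)/\prod_{k=1}^s(u_k+v_k)D_s(\bm u,\bm v)$ have variance proportional to $1/\prod_k(u_k+v_k)$, which blows up as any coordinate of $(\bm u,\bm v)$ tends to $0$. Hence the inner minimum over $\bm v$ (or the inner maximum over $\bm u$ for $f^*$) could a priori be realized arbitrarily close to the boundary, where the process is unbounded and where your compact-set functional convergence gives no control. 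The paper handles this by truncating to $[c^{-\gamma}\bm 1,\,c\bm 1]$ on both sides and devoting Lemma~6.7 to the small-deviation bound $\min_k v_k^*\ge c^{-\gamma}$; the argument is not a consequence of the drift but uses separate Gaussian anti-concentration estimates (Lemmas~C.6 and~C.8 of \cite{han2020}) and the contraction result (Lemma~6.6). Without this piece, the continuous-mapping step does not go through.

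A second, smaller difference: for the passage from weak convergence of the process to weak convergence of the \emph{random} posterior probabilities $\Pi(\cdot\mid\mathbb D_n)\rightsquigarrow\mathrm P(\cdot\mid H_1)$, the paper does not use Skorokhod coupling but a dedicated ``domain truncation'' lemma (Proposition~B.1 / Lemma~B.1 in the supplement), which reduces the problem to (i) conditional weak convergence of the \emph{truncated} functionals $W_{n,c}^*\rightsquigarrow W_c$ for each fixed $c$, (ii) $\Pi(W_{n,c}^*\neq W_n^*\mid\mathbb D_n)\to 0$ as $c\to\infty$ uniformly in large $n$, and (iii) $\mathrm P(W_c\neq W)\to 0$. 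Your Skorokhod route is plausible in principle, but making it rigorous for random measures requires exactly the same two-sided localization you are missing, so the issue recurs.
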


\begin{remark} We make some remarks on Theorem \ref{thm: posterior_distribution}:
\begin{enumerate}
    \item The weak limit is understood in the usual sense for random variables since we consider the limiting behavior of the random probability measure over a fixed set $(-\infty, z]$. We refer to the proof technique of \cite{han2020}, which provides the distributional theory for the block estimator in general multivariate isotonic regression, especially the small and large deviation arguments therein. 
    \item For the choice of $J_k$, the fineness of the partition $\{I_{\bj}\}$, the lower bound $r_{n,k}^{-1}$ for $J_k$ is an essential requirement for Theorem \ref{thm: posterior_distribution}. 
    That eliminates the effect of the roughness of piecewise constant functions in view of the target local contraction rate.
    But the upper bound, $n\omega_n$ in Theorem \ref{thm: posterior_distribution}, is not that necessary for the validity of the weak limit 
    if we set the hyperparameters $\lambda_{\bj}$ large enough. Specifically, when
    $\min \lambda_{\bj}^2 \gg \omega_n^{-1} \sqrt{n}$ and only assume that the second moment of $\varepsilon$ is finite, the conclusion of Lemma \ref{lemma:Anprimeconv} still holds without any upper bound of $\prod_{k=1}^d J_k$. The rest proof of Theorem \ref{thm: posterior_distribution} is affected much without the upper bound for $J_k$ except for the treatment of $\sigma^2$.
    From \eqref{sighat mmle}, we see that $\sigma^2$ would be likely underestimated empirically if $J_k$ is too large, say $\prod_{k=1}^d J_k \geq n$.
    To overcome this, we may obtain the marginal MLE of $\sigma^2$ by using a smaller $J_k$.
    For any $\beta_k \ge 1$ and any $0\le s \le d$,
    we observe that $r_k \le n^{-1/3}$ for all $1\le k \le d$. 
    Without the local smoothness information, we can choose the $J_k \gg n^{1/3}$.
    On the other side, if we admit that $\beta_k = 1$, $1\le k \le d$, is the leading case for the multivariate regression function, we can then choose $J_k \gg n^{-1/(2 + d)}$. 
    In addition, it should be pointed out that $\bJ$ here is not a tuning parameter, and immersion posterior is regulated by the shape restrictions instead of the tuning procedure, like the bandwidth in kernel smoothing. A notable difference from some usual tuning parameters is that the choice of $\bJ$ does not affect the local contraction rate and the distributional theory. 
    
    \item As we contend in the last item, the moment condition for the random error $\varepsilon$ can be relaxed to the second by choosing a large enough $\lambda_{\bj}^2$, which satisfies the condition in the frequentist method \cite{han2020}. It is worth noting that we use a working normal model for the likelihood in the construction of the posterior. The validity of this approach is still ensured even when the model is misspecified.
    
\end{enumerate}
\end{remark}

The covariance kernels of the processes $H_1$ and $H_2$ depend on $g$, and the distributional limits depend on $g$ and the values of $f_0$ and some of its derivatives at $\bx_0$. A considerable simplification happens in some special cases where the parameters appear through a scale parameter in the kernel. It will be seen shortly that this fact has a far-reaching implication in that the limiting coverage of a credible interval constructed from the immersion posterior is free of the unknown parameters of the model. If $L^*$ defined by \eqref{L*} only contains $\beta_k \bm{e}_k$ for $k=1,\ldots,s$, where $\bm{e}_k$ denotes the standard unit vector in $\RR^d$ with one in the $k$th component and zero elsewhere, then the limiting processes in Theorem \ref{thm: posterior_distribution} can be further simplified by the  self-similarity property of the underlying Gaussian processes. A factor depending on $f_0$ and some of its derivatives at $\bx_0$ comes out as a multiplicative constant and the remaining factor is only a known functional of $H_1$ and $H_2$. 
The case $s=d$ stands for the regular case that all directional derivatives of $f_0$ at $\bx_0$ are positive. Then the covariance kernel further simplifies as a completely known function and a factor involving derivatives of the regression function and predictor density $g$. The result is precisely formulated in the result below.

\begin{proposition}
	\label{lemma:separation}
    If $L^*=\{\beta_k\bm{e}_k:{1\leq k \leq s}\}$, then  
\begin{align*}
    &\sup_{\bm{u}\succeq \bm{0}}\inf_{\bm{v}\succeq\bm{0}}\Big\{ 
    \frac{\sigma_0 H_1(\bm{u},\bm{v})}{\prod_{k=1}^s(u_k+v_k)D_s(\bm{u},\bm{v})}+\frac{ \sigma_0 H_2(\bm{u},\bm{v})}{\prod_{k=1}^s(u_k+v_k)D_s(\bm{u}, \bm{v})} \\
    & \qquad +\sum_{k=1}^s \big[\frac{\partial^{\beta_k}_k f_0(\bx_0)}{(\beta_k+1)!} 
     \cdot\frac{v_k^{\beta_k+1}-(-u_k)^{\beta_k+1}}{u_k+v_k}\big] \Big\}\\
    & =_d A_{\bm{\beta}} \cdot \sup_{\bm{u}\succeq \bm{0}}\inf_{\bm{v}\succeq\bm{0}}\Big\{ 
    \frac{H_1(\bm{u},\bm{v})}{\prod_{k=1}^s(u_k+v_k)D_s(\bm{u},\bm{v})} +\frac{  H_2(\bm{u},\bm{v})}{\prod_{k=1}^s(u_k+v_k)D_s(\bm{u},\bm{v})}\\ 
  &\qquad  +\sum_{k=1}^s 
    \frac{v_k^{\beta_k+1}-(-u_k)^{\beta_k+1}}{u_k+v_k} \Big\},
\end{align*}
where 
  $  A_{\bm{\beta}}=\big(\sigma_0^2\prod_{k=1}^s \big(\frac{\partial^{\beta_k}_k f_0(\bx_0)}{(\beta_k+1)!}\big)^{1/\beta_k}\big)^{1/(2+\sum_{k=1}^s \beta_k^{-1})}$.  

Furthermore, if $s=d$, then the above expression further simplifies to 
\begin{align*}
    \tilde{A}_{\bm{\beta}} \sup_{\bm{u}\succeq \bm{0}}\inf_{\bm{v}\succeq\bm{0}}\Big\{ 
    \frac{\tilde{H}_1(\bm{u},\bm{v})}{\prod_{k=1}^d(u_k+v_k)} +\frac{ \tilde{H}_2(\bm{u},\bm{v})}{\prod_{k=1}^d(u_k+v_k)}  
    +\sum_{k=1}^d 
    \frac{v_k^{\beta_k+1}-(-u_k)^{\beta_k+1}}{u_k+v_k} \Big\},
\end{align*}
where 
$    \tilde{A}_{\bm{\beta}}=\big(\frac{\sigma^2_0}{g(\bx_0)}\prod_{k=1}^d \big(\frac{\partial^{\beta_k}_k f_0(\bx_0)}{(\beta_k+1)!}\big)^{1/\beta_k}\big)^{1/({2+\sum_{k=1}^d \beta_k^{-1}})}$, 
and 
$\tilde{H}_1$ and $\tilde{H}_2$ are two independent centered Gaussian processes with covariance kernel given by $\prod_{k=1}^d (u_k\wedge u'_k + v_k\wedge v'_k)$, 
 $(\bm{u}, \bm{v}),(\bm{u}', \bm{v}')\in \RR^d_{\geq 0}\times\RR^d_{\geq 0}$. 

The same conclusion also applies to the $\inf\,\sup$-functional obtained by switching the positions of the supremum and the infimum. 
\end{proposition}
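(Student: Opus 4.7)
The plan is to reduce the claim to a self-similarity property of the Gaussian processes $H_1, H_2$ combined with a coordinate-wise scaling in the first $s$ arguments. When $L^* = \{\beta_k\bm{e}_k: 1\leq k\leq s\}$, all cross-derivative terms in the drift of $U$ vanish, so the drift in the statement reduces to the one-coordinate sum $\sum_{k=1}^s \frac{\partial^{\beta_k}_k f_0(\bx_0)}{(\beta_k+1)!}\cdot\frac{v_k^{\beta_k+1}-(-u_k)^{\beta_k+1}}{u_k+v_k}$. Each such term scales as $c_k^{\beta_k}$ under $u_k\mapsto c_k u_k, v_k\mapsto c_k v_k$, while the Gaussian part will scale as a common factor by self-similarity; the constant $A_{\bm{\beta}}$ arises by choosing $c_k$ to equate these two scalings.

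First I would change variables in the supremum and infimum by substituting $u_k\mapsto c_k u_k$ and $v_k\mapsto c_k v_k$ for $k=1,\ldots,s$, leaving coordinates $s+1,\ldots,d$ untouched. Since the map $(\bm{u},\bm{v})\mapsto (\bm{c}\circ\bm{u},\bm{c}\circ\bm{v})$ with $\bm{c}\succ\bm{0}$ is a bijection of the positive orthant onto itself, the $\sup\inf$ domain is preserved. Next I would exploit the self-similarity of $H_j$: because the covariance kernel \eqref{H covariance} factors as $\prod_{k=1}^s(u_k\wedge u_k'+v_k\wedge v_k')\, D_s(\bm{u}\wedge\bm{u}',\bm{v}\wedge\bm{v}')$ and $D_s$ depends only on coordinates $s+1,\ldots,d$ (which are not scaled), the covariance of the scaled process $H_j(\bm{c}\circ\bm{u},\bm{c}\circ\bm{v})$ is $\prod_{k=1}^s c_k$ times that of $H_j(\bm{u},\bm{v})$. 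Gaussianity then yields the joint distributional identity $\{(H_1,H_2)(\bm{c}\circ\bm{u},\bm{c}\circ\bm{v})\}=_d\{(\prod_{k=1}^s c_k)^{1/2}(H_1,H_2)(\bm{u},\bm{v})\}$, where the joint law in $(j=1,2)$ is preserved because $H_1,H_2$ remain independent under the scaling.

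Substituting, the Gaussian part picks up the overall multiplier $(\prod_{k=1}^s c_k)^{-1/2}$, since the $H_j$ numerator gains $(\prod c_k)^{1/2}$ while the denominator $\prod_{k=1}^s(u_k+v_k)\,D_s$ gains $\prod c_k$; the $k$-th drift term gains $c_k^{\beta_k}$ in the numerator. I would then choose positive scales $c_k$ so that all these factors coincide:
\begin{equation*}
c_k^{\beta_k}\cdot\frac{\partial^{\beta_k}_k f_0(\bx_0)}{(\beta_k+1)!}=\sigma_0\Big(\prod_{k=1}^s c_k\Big)^{-1/2}=A,\qquad 1\le k\le s,
\end{equation*}
for a common $A>0$. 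Solving the first $s$ equations for $c_k$ and substituting into the last identifies $A$ uniquely, and a direct algebraic computation gives $A=A_{\bm{\beta}}$. The integrand on the left-hand side of the proposition is then equal in law to $A_{\bm{\beta}}$ times the simpler integrand on the right-hand side, jointly in $(\bm{u},\bm{v})$, so the deterministic positive constant $A_{\bm{\beta}}$ pulls out of both the $\inf$ and the $\sup$.

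For the special case $s=d$, I would additionally note that $D_d(\bm{u},\bm{v})\equiv g(\bx_0)$ is a positive constant, so I can write $H_j=\sqrt{g(\bx_0)}\,\tilde H_j$ with $\tilde H_1,\tilde H_2$ as defined in the statement. The Gaussian part then rearranges into $(\sigma_0/\sqrt{g(\bx_0)})\tilde H_j(\bm{u},\bm{v})/\prod_{k=1}^d(u_k+v_k)$; repeating the scaling argument with $\sigma_0$ replaced by $\sigma_0/\sqrt{g(\bx_0)}$ yields exactly $\tilde A_{\bm{\beta}}$. The $\inf\sup$-functional is handled verbatim because the change of variables, the self-similarity identity, and the extraction of the deterministic constant are all order-agnostic in the $\sup$ and $\inf$. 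I expect the one step that requires real care to be the joint-in-$j$ distributional identity for the scaled Gaussian processes stated as an equality of laws of processes (not merely of finite-dimensional marginals), which is needed to justify that the scalar $A_{\bm{\beta}}$ commutes past the $\sup\inf$; this is however routine given the covariance computation and the independence of $H_1,H_2$.
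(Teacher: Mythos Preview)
Your proposal is correct and is essentially the same argument as the paper's: both proofs rescale the first $s$ coordinates by a positive vector, invoke the self-similarity $H_i(\bm{t}\circ\bm{u},\bm{t}\circ\bm{v})=_d(\prod_{k=1}^s t_k)^{1/2}H_i(\bm{u},\bm{v})$ together with the invariance of $D_s$ under such scaling, and then solve the system $c_k^{\beta_k}\partial_k^{\beta_k}f_0(\bx_0)/(\beta_k+1)!=\sigma_0(\prod_j c_j)^{-1/2}$ to extract $A_{\bm{\beta}}$; the $s=d$ case is handled identically via $H_i=_d\sqrt{g(\bx_0)}\,\tilde H_i$. Your remark that the distributional identity must hold at the process level (to pass the constant through $\sup\inf$) is a point the paper leaves implicit but which is indeed immediate from the covariance computation and Gaussianity.
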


\begin{remark}[Univariate case]\rm 
We specialize to the univariate  case $s=d=1$, with a general  $\beta$, expanding from the case $\beta=1$ studied by \cite{chakraborty2020coverage}. Then
\begin{align}
\tilde{H}_i(u,v)=_d W_i(v) + W_i(-u)=_d W_i(v) - W_i(-u), \quad (u,v)\in \RR_{\geq 0}^2, 
\end{align}
where $W_1,W_2$ are two independent standard two-sided Brownian motions starting from $0$. Observe that the sup-inf functional 
\begin{align*}
&\sup_{u>0}\inf_{v>0}\Big\{ 
    \frac{\tilde{H}_1(u,v)}{u+v}+\frac{ \tilde{H}_2(u,v)}{u+v} +  
    \frac{v^{\beta_1+1}-(-u)^{\beta_1+1}}{u+v} \Big\}\\
&\quad =_d  \sup_{u>0}\inf_{v>0}\Big\{ \frac{(W_1(v)+W_2(v)+v^{\beta_1+1})-(W_1(-u) + W_2(-u) +u^{\beta_1+1})}{v-(-u)} \Big\},
\end{align*}
coincides with the slope of the greatest convex minorant of the process  $W_1(t)+W_2(t)+t^{\beta+1}$. 
By the switching relation [cf. \cite{groeneboom2014nonparametric}, page 56], for any $z\in\RR$,
\begin{align*}
& \mathrm{P}\Big(\tilde{A}_{\beta}\sup_{u>0}\inf_{v>0}\big\{ 
    \frac{\tilde{H}_1(u,v)}{u+v}+\frac{ \tilde{H}_2(u,v)}{u+v} + 
    \frac{v^{\beta+1}-(-u)^{\beta+1}}{u+v} \big\}\leq z\Big)\\
&\quad =\mathrm{P}\Big(
\argmin \{W_1(t)+W_2(t)+t^{\beta+1}-\tilde{A}_{\beta}^{-1}zt: {t\in\RR}\}\geq 0\Big).
\end{align*}
If $\beta=1$, the last display can be further simplified by applying the change of variable, $t = s + z/(2\tilde{A}_1)$, and noting that $W_i(s - a) =_d W_i(s) - W_i(a)$ for some constant $a\in \RR$ and $i=1,2$,
and is equal to 
\[
    \mathrm{P}(2\tilde{A}_{1}
\argmin\{W_1(s)+W_2(s)+s^2: {s\in\RR}\}\leq z),
\]
with $\tilde A_1=(\sigma_0^2 f'(x_0)/(2 g(x_0)))^{1/3}$. This reproduces the result of \cite{chakraborty2020coverage} in view of the fact that the sup-inf (or inf-sup) functional acting on $f$ gives the slope of the greatest convex minorant of $f$ (i.e., the isotonization of $f$) in the univariate case.
\end{remark}

Now we are ready for the evaluation of the limiting coverage of an immersion posterior credible interval for $f(\bx_0)$. Let 
\begin{align}
Q^{(1)}_{n,\gamma}=\inf \{z: \Pi(f_*(\bx_0)\leq z|\mathbb{D}_n)\geq 1-\gamma\}
\end{align} 
stand for the $(1-\gamma)$-quantile of $f_*(\bx_0)$. Similarly, let $Q^{(2)}_{n,\gamma}$ and $Q^{(3)}_{n,\gamma}$ stand for that of $f^*(\bx_0)$ and $\tilde{f}(\bx_0)$ respectively. Let $\tilde{U}(\bm{u},\bm{v})$ stand for the Gaussian process  
\begin{align}
\frac{\tilde{H}_1(\bm{u},\bm{v})}{\prod_{k=1}^d(u_k+v_k)} +\frac{ \tilde{H}_2(\bm{u},\bm{v})}{\prod_{k=1}^d(u_k+v_k)} 
+\sum_{k=1}^d 
\frac{v_k^{\beta_k+1}-(-u_k)^{\beta_k+1}}{u_k+v_k}
\end{align}
indexed by $(\bm{u}, \bm{v})\in \RR^d_{\geq 0}\times\RR^d_{\geq 0}$.

The following result gives the ultimate conclusion of the paper about asymptotic coverage of credible intervals for the function value at a point. 

\begin{theorem}
	\label{coverage}
Under the assumed setup, Assumptions \ref{assumption_approximation} and \ref{assumption_data}, and the condition that $L^*=\{\beta_k\bm{e}_k:{1\leq k \leq s}\}$, the asymptotic coverage of the quantile-based one-sided credible interval $(-\infty, Q_{n,\gamma}^{(1)}]$ is given by 
\begin{align*}
& \P\bigg( \P\bigg(\sup_{\bm{u}\succeq \bm{0}}\inf_{\bm{v}\succeq\bm{0}}\Big\{ 
\frac{H_1(\bm{u},\bm{v})}{\prod_{k=1}^s(u_k+v_k)D_s(\bm{u},\bm{v})}+\frac{ H_2(\bm{u},\bm{v})}{\prod_{k=1}^s(u_k+v_k)D_s(\bm{u},\bm{v})}\\
&
 \qquad\qquad+\sum_{k=1}^s 
\frac{v_k^{\beta_k+1}-(-u_k)^{\beta_k+1}}{u_k+v_k} \Big\} \leq 0 \big| H_1 \Big)\leq 1-\gamma \Big).
\end{align*}
If $Q^{(1)}_{n,\gamma}$ is replaced by $Q^{(2)}_{n,\gamma}$, the above limit is changed by swapping the order of the supremum and infimum operations. If 
$Q^{(1)}_{n,\gamma}$ is replaced by $Q^{(3)}_{n,\gamma}$, the above limit is changed by replacing the expression on the right by the average of the $\sup\,\inf$ and $\inf\,\sup$ operations. 

Moreover, if $s=d$, 
\begin{enumerate}
	\item [{\rm (i)}] $\P_0(f_0(\bx_0)\leq Q^{(1)}_{n,\gamma})  \to \P(Z_B^{(1)} \leq 1-\gamma )$;
	\item [{\rm (ii)}] $\P_0(f_0(\bx_0)\leq Q^{(2)}_{n,\gamma})  \to \P(Z_B^{(2)} \leq 1-\gamma )$;
	\item [{\rm (iii)}] $\P_0(f_0(\bx_0)\leq Q^{(3)}_{n,\gamma})  \to\P(Z_B^{(3)} \leq 1-\gamma )$,  
\end{enumerate}
where $Z_B^{(1)} = \P(\displaystyle\sup_{\bm{u}\succeq \bm{0}}\inf_{\bm{v}\succeq\bm{0}}\tilde{U}(\bm{u},\bm{v})  \leq 0 | \tilde{H}_1)$, 
$Z_B^{(2)} = \P(\displaystyle\inf_{\bm{v}\succeq\bm{0}}\sup_{\bm{u}\succeq \bm{0}}\tilde{U}(\bm{u},\bm{v})  \leq 0 | \tilde{H}_1)$, and 
$Z_B^{(3)} = \P(\frac12 \displaystyle\{ \sup_{\bm{u}\succeq \bm{0}}\inf_{\bm{v}\succeq\bm{0}}\tilde{U}(\bm{u},\bm{v})+\inf_{\bm{v}\succeq\bm{0}}\sup_{\bm{u}\succeq \bm{0}}\tilde{U}(\bm{u},\bm{v}) \} \leq 0 | \tilde{H}_1)$.
\end{theorem}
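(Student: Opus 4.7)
My plan is to convert the frequentist coverage event into a statement about the posterior cumulative distribution function at the true value $f_0(\bx_0)$, and then feed it into the weak convergence result of Theorem \ref{thm: posterior_distribution} together with the simplification from Proposition \ref{lemma:separation}. First, by monotonicity of the posterior CDF $F_n(z) = \Pi(f_*(\bx_0)\le z\mid \mathbb{D}_n)$, the event $\{f_0(\bx_0)\le Q^{(1)}_{n,\gamma}\}$ differs from $\{F_n(f_0(\bx_0))\le 1-\gamma\}$ only through the jump behavior of $F_n$ at the level $1-\gamma$; after centering and rescaling this rewrites as
\[
\Pi\bigl(\omega_n^{-1}(f_*(\bx_0)-f_0(\bx_0))\le 0 \,\big|\, \mathbb{D}_n\bigr) \le 1-\gamma.
\]
I will argue that the boundary discrepancy contributes $o(1)$ to the $\P_0$-probability because the limiting CDF of $\P(Z_*\le 0\mid H_1)$ is continuous at $1-\gamma$.

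Next, I apply Theorem \ref{thm: posterior_distribution} at $z=0$, which says that the posterior probability displayed above converges weakly, under $\P_0$, to the $[0,1]$-valued random variable $\P(Z_*\le 0\mid H_1)$. Since both the pre-limit and the limit are confined to $[0,1]$, the Portmanteau theorem delivers
\[
\P_0\bigl(\Pi(\omega_n^{-1}(f_*(\bx_0)-f_0(\bx_0))\le 0\mid\mathbb{D}_n) \le 1-\gamma\bigr) \to \P\bigl(\P(Z_*\le 0\mid H_1) \le 1-\gamma\bigr)
\]
at every continuity point $1-\gamma$ of the CDF of $\P(Z_*\le 0\mid H_1)$. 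Combined with the first step, this is the general formula for the $Q^{(1)}_{n,\gamma}$ coverage stated in the theorem. The analogous statements for $Q^{(2)}_{n,\gamma}$ and $Q^{(3)}_{n,\gamma}$ are produced the same way, with the $\sup\inf$ functional replaced by $\inf\sup$ and by $\tfrac12(\sup\inf+\inf\sup)$; the third case uses the joint weak-convergence conclusion at the end of Theorem \ref{thm: posterior_distribution} applied to $(\omega_n^{-1}(f_*(\bx_0)-f_0(\bx_0)),\omega_n^{-1}(f^*(\bx_0)-f_0(\bx_0)))$ at the point $(0,0)$.

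Under the hypothesis $L^*=\{\beta_k\bm{e}_k: 1\le k\le s\}$, Proposition \ref{lemma:separation} rewrites $Z_* =_d A_{\bm\beta}\,\xi$, where $\xi$ is the $\sup\inf$ functional assembled from $H_1,H_2$ and the trend $\sum_{k=1}^s (v_k^{\beta_k+1}-(-u_k)^{\beta_k+1})/(u_k+v_k)$, free of the unknown parameters in $A_{\bm\beta}$. Since $A_{\bm\beta}>0$, the event $\{Z_*\le 0\}$ coincides with $\{\xi\le 0\}$, producing the displayed nuisance-free coverage formula. When $s=d$, the second clause of Proposition \ref{lemma:separation} further replaces the internal process by the parameter-free $\tilde U$, and the limits collapse to the probabilities $Z_B^{(1)}, Z_B^{(2)}, Z_B^{(3)}$ in assertions (i)--(iii).

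The main obstacle I expect is verifying that the distribution function of $\P(Z_*\le 0\mid H_1)$ is continuous at $1-\gamma$, which underlies both the Portmanteau step and the boundary cleanup in the first paragraph. This amounts to an anti-concentration argument: conditionally on $H_1$, the independent Gaussian process $H_2$ supplies enough randomness through the $\sup\inf$ functional $Z_*$ to ensure that $\P(Z_*\le 0\mid H_1)$ has no atoms almost surely. A secondary routine verification is the measurability of $H_1\mapsto \P(Z_*\le 0\mid H_1)$, which follows from the continuity of the sample paths of $H_1$ and $H_2$ on the index set already used throughout the paper. Once these are in place, all other steps reduce to direct applications of Theorem \ref{thm: posterior_distribution} and Proposition \ref{lemma:separation}.
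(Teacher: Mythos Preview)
Your proposal is correct and follows essentially the same route as the paper: reduce the coverage event to $\Pi(\omega_n^{-1}(f_*(\bx_0)-f_0(\bx_0))\le 0\mid\mathbb{D}_n)\le 1-\gamma$, invoke Theorem~\ref{thm: posterior_distribution} at $z=0$, and then drop the multiplicative constant $A_{\bm\beta}$ (resp.\ $\tilde A_{\bm\beta}$) via Proposition~\ref{lemma:separation} because the set $(-\infty,0]$ is scale-invariant. The paper's proof is in fact terser than yours---it asserts the equivalence $\{f_0(\bx_0)\le Q^{(1)}_{n,\gamma}\}=\{\Pi(f_*(\bx_0)\le f_0(\bx_0)\mid\mathbb{D}_n)\le 1-\gamma\}$ as an exact if-and-only-if and does not explicitly discuss the continuity of the distribution function of $\P(Z_*\le 0\mid H_1)$ at $1-\gamma$; your identification of this point as the residual technical obstacle is more careful than the paper itself.
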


\begin{proof}
We observe that $f_0(\bx_0)\leq Q^{(1)}_{n,\gamma}$ if and only if 
\begin{align*}
\Pi(f_{\ast}(\bx_0)\leq f_0(\bx_0)|\mathbb{D}_n)=\Pi(\omega_n^{-1}(f_{\ast}(\bx_0)-f_0(\bx_0))\leq 0 |\mathbb{D}_n)\leq 1-\gamma.
\end{align*}
Hence by Theorem~\ref{thm: posterior_distribution} and Proposition~\ref{lemma:separation}, as the multiplicative constant in the limiting process can be dropped because the interval $(-\infty,0]$ remains invariant under a scale-change, the first conclusion follows immediately. The special cases follow from the second part of Proposition~\ref {lemma:separation}.
\end{proof}

\begin{remark}
For $d=1$, $Z_B^{(1)}$, $Z_B^{(2)}$ and $Z_B^{(3)}$ all coincide, and may be simply denoted by $Z_B$ as in \cite{chakraborty2020coverage}. 
\end{remark}

The distributions of $Z_B^{(1)}$ and $Z_B^{(2)}$ are related, as shown next. 

\begin{proposition}
\label{dual tail}
For any $z\in [0,1]$, we have $\P(Z_B^{(1)}\leq z)=\P(Z_B^{(2)}\geq 1-z)$, and $Z_B^{(3)}$ is symmetrically distributed about $1/2$.
\end{proposition}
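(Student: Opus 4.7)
The plan is to exhibit a path-space involution that swaps the sup-inf and $-(\text{inf-sup})$ functionals of the limiting process $\tilde{U}$. Define $T$ on processes indexed by $\RR_{\geq 0}^d\times\RR_{\geq 0}^d$ by $(TH)(\bm{u},\bm{v})=-H(\bm{v},\bm{u})$. Because the covariance kernel $\prod_{k=1}^d(u_k\wedge u'_k+v_k\wedge v'_k)$ of $\tilde{H}_1,\tilde{H}_2$ is invariant under negation and under the swap $(\bm{u},\bm{v})\leftrightarrow(\bm{v},\bm{u})$, the joint law is preserved, so $(T\tilde{H}_1,T\tilde{H}_2)=_d(\tilde{H}_1,\tilde{H}_2)$, and $T^2=\mathrm{id}$.

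Next I would record that each $\beta_k$ is necessarily odd: if the first nonzero partial $\partial_k^{\beta_k}f_0(\bx_0)$ had even order, then $f_0$ restricted to the ray $\bx_0+t\bm{e}_k$ would have a strict local extremum at $t=0$, contradicting coordinatewise monotonicity. Consequently $(-u_k)^{\beta_k+1}=u_k^{\beta_k+1}$ and the drift components $D_k(u_k,v_k):=(v_k^{\beta_k+1}-(-u_k)^{\beta_k+1})/(u_k+v_k)=(v_k^{\beta_k+1}-u_k^{\beta_k+1})/(u_k+v_k)$ satisfy $D_k(v_k,u_k)=-D_k(u_k,v_k)$. Letting $\tilde{U}^T$ denote the process obtained by substituting $T\tilde{H}_i$ for $\tilde{H}_i$ in the definition of $\tilde{U}$, a direct change of variables $(\bm{u},\bm{v})\leftrightarrow(\bm{v},\bm{u})$ inside the sup-inf, together with the antisymmetry of $D_k$, yields
\[
\sup_{\bm{u}}\inf_{\bm{v}}\tilde{U}^T=-\inf_{\bm{v}}\sup_{\bm{u}}\tilde{U},\qquad \inf_{\bm{v}}\sup_{\bm{u}}\tilde{U}^T=-\sup_{\bm{u}}\inf_{\bm{v}}\tilde{U}.
\]

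I would then exploit that $\sigma(T\tilde{H}_1)=\sigma(\tilde{H}_1)$, since $T$ is a measurable bijection, so conditional expectations given either agree as random variables. Combining this with $(T\tilde{H}_1,T\tilde{H}_2)=_d(\tilde{H}_1,\tilde{H}_2)$ gives
\[
Z_B^{(1)}=\P\bigl(\sup_{\bm{u}}\inf_{\bm{v}}\tilde{U}\le 0\,\big|\,\tilde{H}_1\bigr)=_d\P\bigl(\inf_{\bm{v}}\sup_{\bm{u}}\tilde{U}\ge 0\,\big|\,\tilde{H}_1\bigr),
\]
and the analogous identity when $\sup_{\bm{u}}\inf_{\bm{v}}$ is replaced by the symmetric average $\tfrac12(\sup_{\bm{u}}\inf_{\bm{v}}+\inf_{\bm{v}}\sup_{\bm{u}})$. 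Provided the conditional law of $\inf_{\bm{v}}\sup_{\bm{u}}\tilde{U}$ given $\tilde{H}_1$ has no atom at $0$, the right-hand side equals $1-Z_B^{(2)}$; hence $Z_B^{(1)}=_d 1-Z_B^{(2)}$, which is stronger than and immediately implies $\P(Z_B^{(1)}\le z)=\P(Z_B^{(2)}\ge 1-z)$. The identical step applied to the symmetrized functional gives $Z_B^{(3)}=_d 1-Z_B^{(3)}$, i.e.\ symmetry about $1/2$.

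The main obstacle I anticipate is establishing the atomlessness at $0$ of these conditional distributions given $\tilde{H}_1$. My plan is to note that conditional on $\tilde{H}_1$ the process $\tilde{U}$ is a nondegenerate centered Gaussian process in $\tilde{H}_2$ plus a deterministic drift, and the sup-inf, inf-sup, and their average are non-constant Borel-measurable functionals of that Gaussian field; one may then invoke Tsirelson's continuity theorem on the law of measurable functionals of a nondegenerate Gaussian process (or the corresponding absolute continuity results used for the block estimator in the cited distributional theory) to conclude that the conditional law of each functional is continuous, in particular atomless at $0$.
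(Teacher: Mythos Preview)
Your proposal is correct and follows essentially the same route as the paper: both exploit that the joint law of $(\tilde{H}_1,\tilde{H}_2)$ is invariant under the involution $H(\bm{u},\bm{v})\mapsto -H(\bm{v},\bm{u})$, combined with the antisymmetry of the drift under the swap $(\bm{u},\bm{v})\leftrightarrow(\bm{v},\bm{u})$, to obtain $Z_B^{(1)}=_d 1-Z_B^{(2)}$ and $Z_B^{(3)}=_d 1-Z_B^{(3)}$. You are in fact more careful than the paper on two points---the paper silently uses the oddness of each $\beta_k$ when rewriting $(-u_k)^{\beta_k+1}$ as $u_k^{\beta_k+1}$, and it glosses over the atomlessness of the conditional law at $0$ entirely---though your appeal to Tsirelson for the latter is not quite the right tool (that result covers suprema, not general sup--inf functionals), so that step would still need a sharper justification than either you or the paper supplies.
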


From Theorem~\ref{coverage} and Proposition~\ref{dual tail}, it follows that the limiting coverage of a one-sided Bayesian credible interval for $f(\bx_0)$ using one of the three proposed immersion posteriors can be evaluated, is free of the true regression function (and also is free of the density $g$ of the predictor if $s=d$, and hence depends only on the credibility level), but in general, need not be equal to the credibility. Nevertheless, a targeted limiting coverage can be obtained by starting with a certain credibility level that can be explicitly computed by back-calculation. As in the univariate monotone problems studied by \cite{chakraborty2020coverage,chakraborty2021convergence}, numerical calculations show that the required credibility to obtain a specific limiting coverage is less than the targeted coverage, the opposite of the phenomenon \cite{cox1993analysis} observed for smoothing problems. However, unlike in the univariate case where the limiting Bayes-Chernoff distribution determining the asymptotic coverage of the credible interval is symmetric, the corresponding random variables $Z_B^{(1)}$ and $Z_B^{(2)}$ for the posterior based on the immersion maps $\underline{\iota}$ and $\overline{\iota}$ appearing in the multivariate case are not symmetric. This has implications for the limiting coverage of a two-sided credible interval, which is more commonly used in practice. For instance, for $0<\gamma<1/2$, a two-sided $(1-\gamma)$-credible interval $[Q_{n,1-\gamma/2},Q_{n,\gamma/2}]$ based on the immersion posterior using the map $\underline{\iota}$, the limiting coverage is given by $\P(Z_B^{(1)}\leq 1-\gamma/2)-\P(Z_B^{(1)}\leq \gamma/2)$. 
The corresponding limit for the immersion posterior using the map $\overline{\iota}$ is $\P(Z_B^{(2)}\leq 1-\gamma/2)-\P(Z_B^{(2)}\leq \gamma/2)$. Interestingly, a separate table for the distribution function of $Z_B^{(2)}$ is not needed, as it can be obtained from that of $Z_B^{(1)}$ in view of Proposition~\ref{dual tail}. The symmetry of $Z_B^{(3)}$, however, implies that the credibility level $1-\gamma$ needed to make the asymptotic coverage of an equal-tailed $(1-\gamma)$-credible interval $1-\alpha$ is obtained by choosing $1-\gamma=1-2 F_{Z_B^{(3)}}^{-1} (\alpha/2)$, which is readily obtained once the cumulative distribution function $F_{Z_B^{(3)}}$ of $Z_B^{(3)}$ is tabulated. 

\section{Numerical Results}
\label{sec:simulation}

\subsection{Distribution of $Z_B$}
In this section, we give tables for the distribution and quantiles for $Z_B$ for the case $d=1$ when $\beta=1,3,5$, and for those of $Z_B^{(1)}, Z_B^{(2)}, Z_B^{(3)}$ for the case $d=2$ when $\bm{\beta}=(1,1),(1,3),(3,3)$. The distributions of these variables are simulated by the Monte Carlo method. The Gaussian processes involved will be generated by a discrete approximation. The quantile table can serve as a recalibration reference to achieve the correct asymptotic coverage.

\subsubsection{Case $d=1$}
First, we generate the approximation to Gaussian processes $\tilde{H}_1$ and $\tilde{H}_2$. Let $\tilde{H}$ denote either $\tilde{H}_1$ or $\tilde{H}_2$. To approximate $\tilde{H}$, we generate $14 m$ independent standard Gaussian random variables $\{\zeta_j:  j=1,\ldots, 7 m\}$ and $\{\zeta'_j:  j =1,\ldots, 7 m\}$ for $m=50$. Then $\tilde{H}$ can be approximately represented as 
\begin{align}
    \tilde{H}(u,v)\approx \frac{1}{\sqrt{m}} \big[
    \sum_{j=1}^{\ceil{mu}}\zeta_j+\sum_{j=1}^{\ceil{mv}}\zeta_j'
    \big],
\end{align}
for $u,v\in [0,7]$. Given each $\tilde{H}_1$, we generate $500$ realizations of $\tilde{H}_2$. For each realization, we calculate the sup-inf functional. Then the proportion of non-positive outcomes is a sample value of $Z_B$. We repeat the generation process $50,000$ times to obtain the approximate distribution function of $Z_B$.

\begin{figure}
\centering
\begin{subfigure}{.32\textwidth}
  \centering
  \includegraphics[width=\textwidth]{./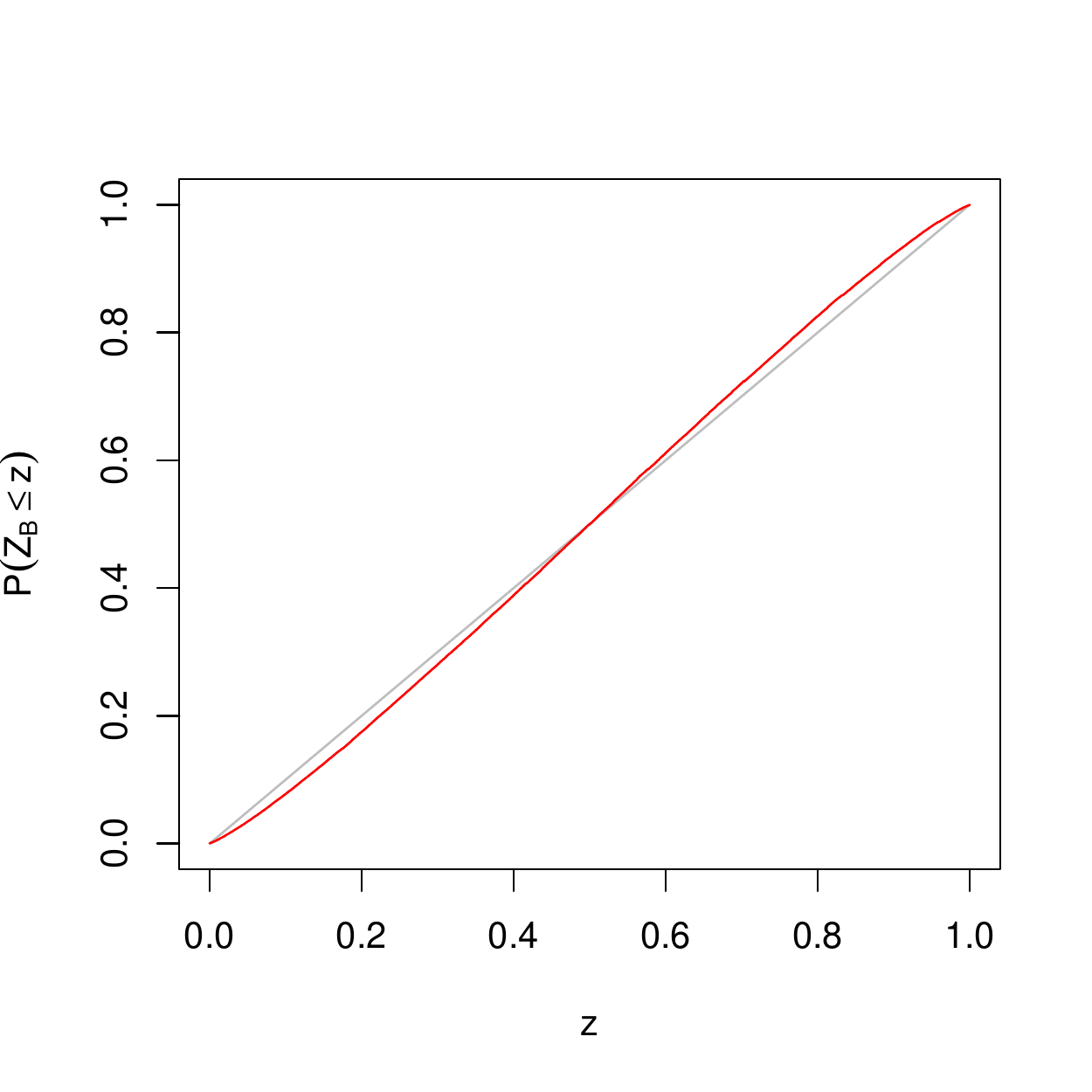}
  \caption{$\beta=1$}
\end{subfigure}
\hfill
\begin{subfigure}{.32\textwidth}
  \centering
  \includegraphics[width=\textwidth]{./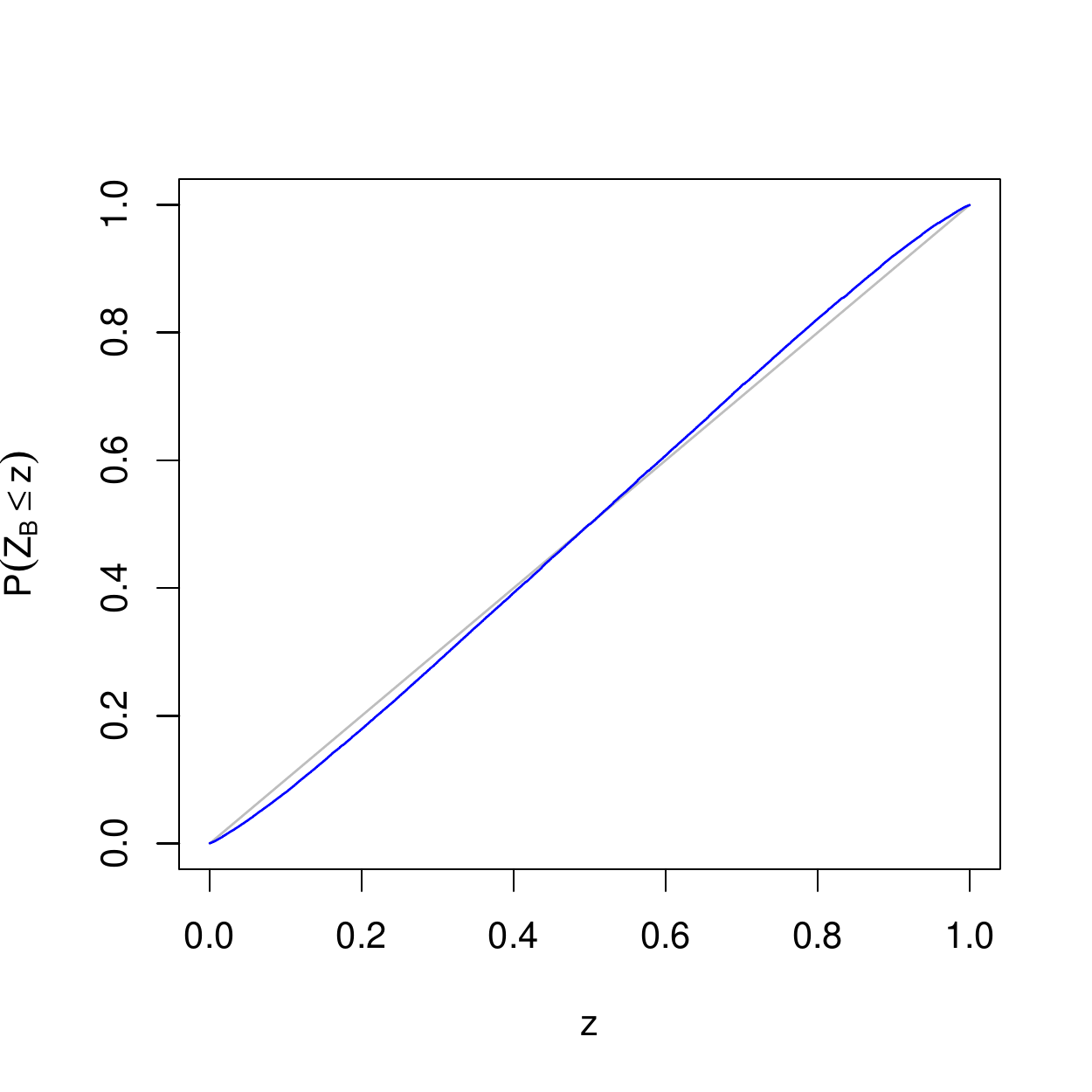}
  \caption{$\beta=3$}
\end{subfigure}
\hfill
\begin{subfigure}{.32\textwidth}
  \centering
  \includegraphics[width=\textwidth]{./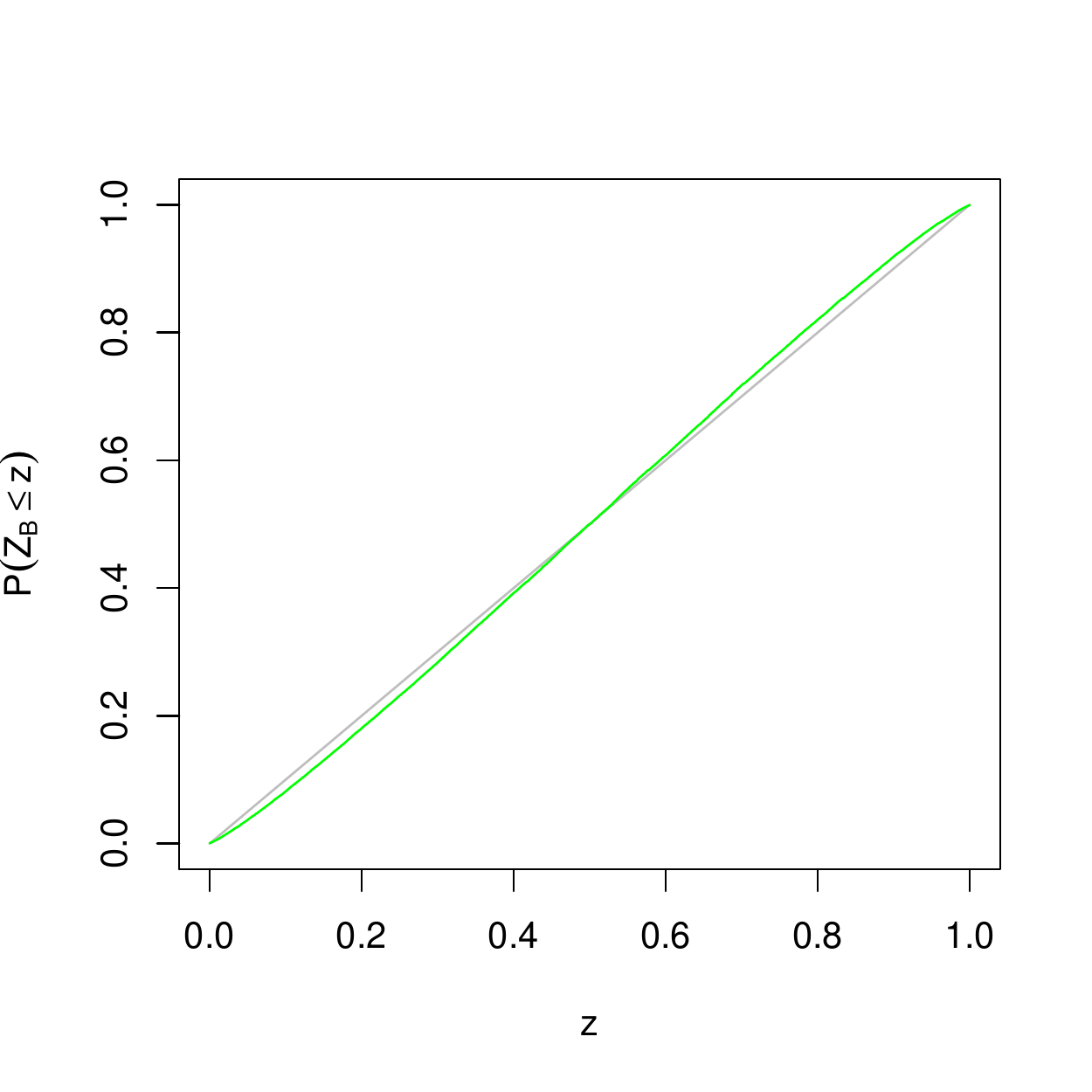}
  \caption{$\beta=5$}
\end{subfigure}
\caption{The distribution function of $Z_B$.}
\label{fig:zb1}
\end{figure}

In Figure \ref{fig:zb1}, we draw the simulated distribution function of $Z_B$ with $\beta = 1, 3$ and $5$.
We give the values of $\P(Z_B \leq z)$ with different smoothness levels for some selected $q$ in Table \ref{tb:d1} and the values of the quantiles of the distribution function of $Z_B$ in Table \ref{tb:d1quan}.

\begin{table}[ht]
	\caption{Values of $\P(Z_B \leq z)$ }
	\label{tb:d1}
	\centering
	\begin{tabular}{|c|ccccccccc|}
		\hline
		$z$ &  0.700 & 0.750 & 0.800 & 0.850 & 0.900 & 0.950 & 0.975 & 0.990 &  0.995\\
		\hline
		$\beta =1$ & 0.719 & 0.772 &  0.826 & 0.875 & 0.923 & 0.965 & 0.985 & 0.994 & 0.997 \\
		$\beta = 3$ & 0.715 & 0.768 & 0.821 & 0.870 & 0.921 & 0.963 & 0.983 & 0.994 & 0.997 \\
		$\beta =5$ & 0.716 & 0.768 & 0.820 & 0.869 & 0.919 & 0.962 & 0.983 & 0.993 & 0.997 \\
		\hline
	\end{tabular}
\end{table}

\begin{table}[ht]
	\caption{Values of $q=\inf\{z: \P(Z_B\leq z)\geq p\}$ }
	\label{tb:d1quan}
	\centering
	\begin{tabular}{|c|ccccccccc|}
		\hline
		$p$ &  0.700 & 0.750 & 0.800 & 0.850 & 0.900 & 0.950 & 0.975 & 0.990 &  0.995\\
		\hline
		$\beta =1$ & 0.683  & 0.730 &  0.777 & 0.825 & 0.878 & 0.932  & 0.964 & 0.994 & 0.997 \\
		$\beta = 3$ & 0.687 & 0.734 & 0.781 & 0.829 & 0.882 & 0.935 & 0.966 & 0.986 & 0.992 \\
		$\beta =5$ & 0.686 & 0.734 & 0.782 & 0.831 & 0.882 & 0.936  & 0.966 & 0.986 & 0.994 \\
	 \hline
\end{tabular}
\end{table}

\subsubsection{Case $d=2$}
 To approximate $\tilde{H}(\bm{u},\bm{v})$, for $\bm{u},\bm{v}\in\RR^2$, we generate 4 random matrices $\zeta^{(1)}$, $\zeta^{(2)}$, $\zeta^{(3)}$ and $\zeta^{(4)}$ with independent standard Gaussian random variables. The dimensions of these 4 matrices are $\ceil{m t_1} \times \ceil{m t_2}$, $\ceil{m s_1} \times \ceil{m t_2}$, $\ceil{m s_1} \times \ceil{m s_2}$ and $\ceil{m t_1} \times \ceil{m s_2}$ for $m=5$ and $t_1=t_2=s_1=s_2=5$. Then $\tilde{H}(\bm{u},\bm{v})$ is approximated by 
\begin{align*}
    \frac{1}{m}\big(
    \sum_{i=1}^{\ceil{mv_1}}\sum_{j=1}^{\ceil{mv_2}}\zeta^{(1)}_{ij}
    + \sum_{i=1}^{\ceil{mu_1}}\sum_{j=1}^{\ceil{mv_2}}\zeta^{(2)}_{ij}
    + \sum_{i=1}^{\ceil{mu_1}}\sum_{j=1}^{\ceil{mu_2}}\zeta^{(3)}_{ij}
    + \sum_{i=1}^{\ceil{mv_1}}\sum_{j=1}^{\ceil{mu_2}}\zeta^{(4)}_{ij}
    \big),
\end{align*}
for $u_1,u_2,v_1,v_2\in [0,5]$.

To get a sample of any one of $Z_B^{(1)}$, $Z_B^{(2)}$ and $Z_B^{(3)}$, 
we generate a sample of $\tilde{H}_1$. Given this sample, we generate 500 realizations of $\tilde{H}_2$. Then we calculate the three functionals defining $Z_B^{(1)}$, $Z_B^{(2)}$ and $Z_B^{(3)}$. The conditional probabilities are approximated by the frequency of negative functional values. This process is repeated 50,000 times for $\bm{\beta}=(1,1), (3,1)$ and $(3,3)$, to estimate the distribution of  $Z_B^{(1)}$, $Z_B^{(2)}$ or $Z_B^{(3)}$.

Since in higher-dimensional cases, $Z_B^{(1)}$ and $Z_B^{(2)}$ are not equal in distribution and their distribution functions are not symmetric about $0.5$, 
we give both the values of $\P(Z_B^{(1)} \leq z)$ and  $\P(Z_B^{(2)} \leq z)$ for some selected $z$ in Table~\ref{tb:d2a11}. The corresponding cumulative distribution functions are plotted in Figures \ref{fig:zbd2}. We present the quantiles of $Z_B^{(3)}$ with different smoothness levels in Table~\ref{tb:d2quan}.

\begin{table}[ht]
	\caption{Values of $\P(Z_B \leq z)$ for various $z$ and $\bm{\beta}$, and $Z_B=Z_B^{(1)},Z_B^{(2)},Z_B^{(3)}$. }
	\label{tb:d2a11}
	\centering
	\begin{tabular}{|c|ccc|ccc|ccc|}
		\hline
		$z$ & \multicolumn{3}{c|}{ $\bm{\beta}=(1,1)$} &  \multicolumn{3}{c|}{ $\bm{\beta}=(3,1)$} & \multicolumn{3}{c|}{ $\bm{\beta}=(3,3)$} \\
		\cline{2-10}
	&	$Z_B^{(1)}$ & $Z_B^{(2)}$ & $Z_B^{(3)}$ 	&	$Z_B^{(1)}$ & $Z_B^{(2)}$ & $Z_B^{(3)}$ 	&	$Z_B^{(1)}$ & $Z_B^{(2)}$ & $Z_B^{(3)}$ \\
		\hline
		0.700 & 0.705 & 0.752 & 0.725 & 0.704 & 0.741 & 0.721 & 0.708 & 0.735 & 0.718 \\
		0.750 & 0.762 & 0.803 & 0.778 & 0.760 & 0.791 & 0.773 & 0.762 & 0.787 & 0.771 \\
		0.800 & 0.817 & 0.851 & 0.832 & 0.814 & 0.842 & 0.827 & 0.817 & 0.838 & 0.825\\
		0.850 & 0.871 & 0.898 & 0.880 & 0.868 & 0.889 & 0.877 & 0.868 & 0.885 & 0.874\\
		0.900 & 0.921 & 0.939 & 0.927  & 0.917 & 0.932 & 0.924 & 0.918 & 0.930 & 0.922\\
		0.950 & 0.966 & 0.975 & 0.968 & 0.964 & 0.971 & 0.966  & 0.964 & 0.970 & 0.965 \\
		0.975 & 0.985 & 0.989 & 0.987  & 0.983 & 0.987 & 0.986  & 0.984 & 0.986 & 0.985 \\
		0.990 & 0.995 & 0.997 & 0.995 & 0.995 & 0.997 & 0.995 & 0.995 & 0.996 & 0.994 \\
		0.995 & 0.997 & 0.998 & 0.998 & 0.998 & 0.998 & 0.998 & 0.997 & 0.998 & 0.997 \\
		\hline
	\end{tabular}
\end{table}

\begin{figure}[ht]
\centering
\includegraphics[scale=0.71]{./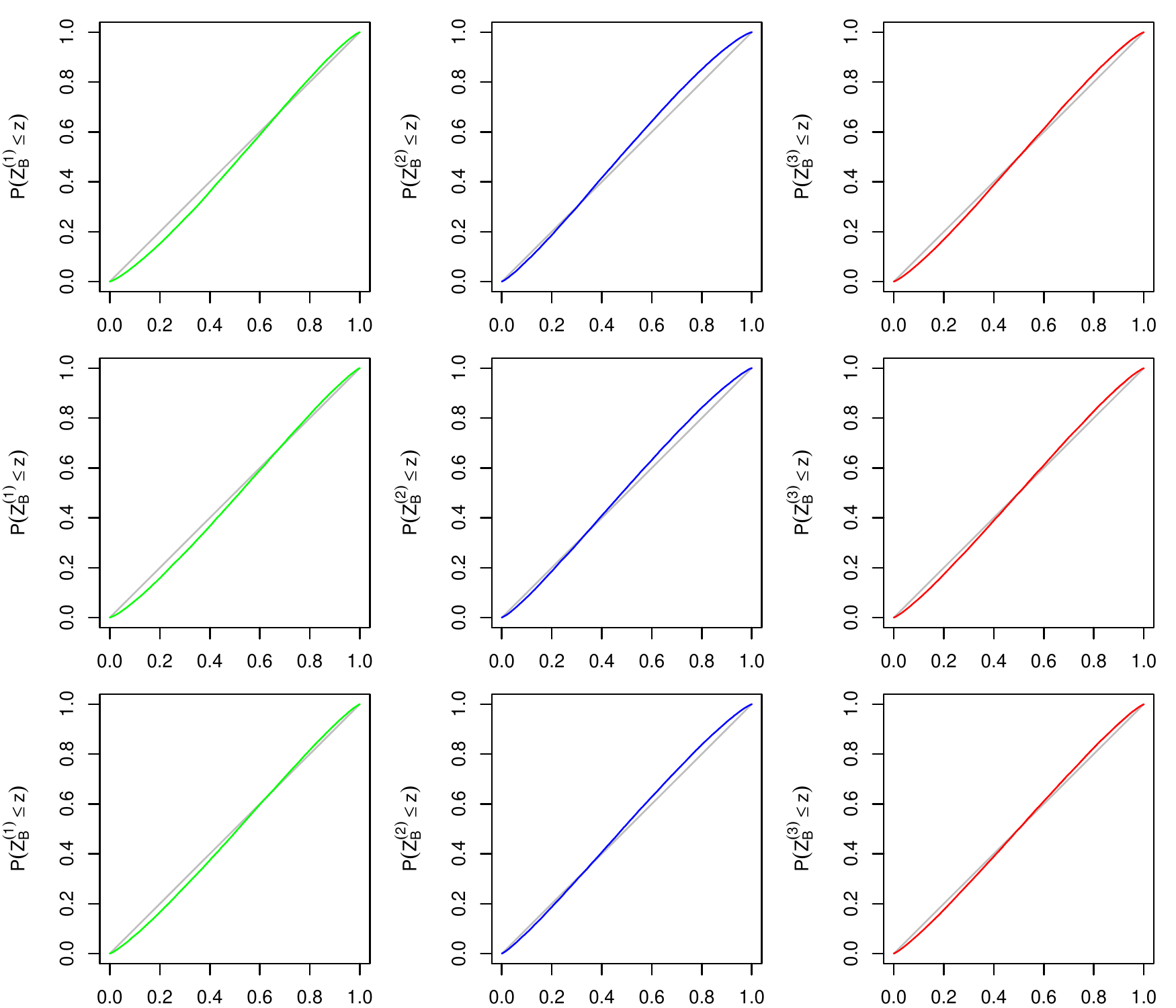}
\centering
\caption{Distribution functions of $Z_B^{(1)}$, $Z_B^{(2)}$ or $Z_B^{(3)}$.}{The three plots in the first row are for $\bm{\beta}=(1,1)$; the second row is for $\bm{\beta}=(3,1)$; the last row is for
$\bm{\beta}=(3,3)$.}
\label{fig:zbd2}
\end{figure}

\begin{table}[ht]
  \caption{Values of $q=\inf\{z: \P(Z_B\leq z)\geq p\}$ for various $p$, and $Z_B=Z_B^{(1)},Z_B^{(2)},Z_B^{(3)}$. }
  \label{tb:d2quan}
  \centering
  \begin{tabular}{|c|ccc|ccc|ccc|}
		\hline
		$p$ & \multicolumn{3}{c|}{ $\bm{\beta}=(1,1)$} &  \multicolumn{3}{c|}{ $\bm{\beta}=(3,1)$} & \multicolumn{3}{c|}{ $\bm{\beta}=(3,3)$} \\
		\cline{2-10}
	&	$Z_B^{(1)}$ & $Z_B^{(2)}$ & $Z_B^{(3)}$ 	&	$Z_B^{(1)}$ & $Z_B^{(2)}$ & $Z_B^{(3)}$ 	&	$Z_B^{(1)}$ & $Z_B^{(2)}$ & $Z_B^{(3)}$ \\
		\hline
		0.700 & 0.697 & 0.653 & 0.677 & 0.699 & 0.665 & 0.681 & 0.695 & 0.669 & 0.684 \\
		0.750 & 0.741 & 0.699 & 0.724 & 0.743 & 0.711 & 0.728 & 0.741 & 0.715 & 0.732 \\
		0.800 & 0.787 & 0.749 & 0.771 & 0.789 & 0.759 & 0.776 & 0.787 & 0.763 & 0.778\\
		0.850 & 0.833 & 0.801 & 0.819 & 0.835 & 0.811 & 0.823 & 0.833 & 0.815 & 0.825\\
		0.900 & 0.881 & 0.855 & 0.872 & 0.883 & 0.865 & 0.876 & 0.883 & 0.869 & 0.878\\
		0.950 & 0.933 & 0.917 & 0.928 & 0.937 & 0.925 & 0.931 & 0.935 & 0.927 & 0.933 \\
		0.975 & 0.963 & 0.951 & 0.959 & 0.965 & 0.957 & 0.962 & 0.965 & 0.959 & 0.964 \\
		0.990 & 0.983 & 0.977 & 0.982 & 0.985 & 0.981 & 0.984 & 0.985 & 0.983 & 0.984 \\
		0.995 & 0.991 & 0.987 & 0.990 & 0.991 & 0.989 & 0.992 & 0.993 & 0.991 & 0.992 \\
		\hline
	\end{tabular}

\end{table}

\subsection{Comparison with Deng, Han and Zhang's method}
For pointwise inference in multivariate isotonic regression, Deng {\em et al.} \cite{deng2020confidence} constructed the confidence interval by the asymptotic distribution of a pivotal statistic. Their method is referred to as DHZ in the following.
Let $\hat{\bu}(\bx_0)$ and $\hat{\bv}(\bx_0)$ be such that
\begin{align*}
    & \hat{f}^{-}(\bx_0) = \max_{\bu\preceq\bx_0} \min_{\substack{\bv \succeq \bx_0\\ \#\{i:\bX_i\in[\bu:\bv]\}>0}} \bar{Y}|_{[\bu:\bv]}=  \min_{\substack{\bv \succeq \bx_0\\ \#\{i:\bX_i\in[\hat{\bu}(\bx_0):\bv]\}>0}} \bar{Y}|_{[\hat{\bu}(\bx_0):\bv]},\\
    & \hat{f}^{+}(\bx_0) = \min_{\bv\succeq\bx_0} \max_{\substack{\bu \preceq \bx_0\\ \#\{i:\bX_i\in[\bu:\bv]\}>0}} \bar{Y}|_{[\bu:\bv]}=  \max_{\substack{\bu \preceq \bx_0\\ \#\{i:\bX_i\in[\bu:\hat{\bv}(\bx_0)]\}>0}} \bar{Y}|_{[\bu:\hat{\bv}(\bx_0)]},
\end{align*}
and $\hat{f}(\bx_0) = (\hat{f}^-(\bx_0)+\hat{f}^+(\bx_0))/2$.
Under the same data generating conditions as in Theorem \ref{coverage} and additionally assuming $\bX$ is uniform distributed, Deng {\em et al.} \cite{deng2020confidence} showed that
\begin{align*}
    \frac{\sigma}{\sqrt{\#\{i:\bX_i\in[\hat{\bu}(\bx_0):\hat{\bv}(\bx_0)]\}}}(\hat{f}(\bx_0) - f(\bx_0)) \rightsquigarrow K_{\bm{\beta}},
\end{align*}
where $K_{\bm{\beta}}$ is a universal distribution depending only on the local regularity $\bm{\beta}$.
Let $1-\gamma\in(0.5,1)$ be the confidence level. They proposed the following confidence interval, referred to as DHZ in the following, for $f_0(\bx_0)$:
\begin{equation}
    [\hat{f}(\bx_0) - \frac{c_{\gamma}\hat{\sigma}}{\sqrt{\#\{i:\bX_i\in [\hat{\bu}(\bx_0): \hat{\bv}(\bx_0)]\}}}, \hat{f}(\bx_0) + \frac{c_{\gamma}\hat{\sigma}}{\sqrt{\#\{i:\bX_i\in [\hat{\bu}(\bx_0): \hat{\bv}(\bx_0)]\}}}],
\end{equation}
where $c_{\gamma}$ is the critical value obtained by simulating the limiting distribution $K_{\bm{\beta}}$ and $\hat{\sigma}$ is a consistent estimator of $\sigma$.

We propose five regression functions: 
\begin{enumerate}
    \item[(1)] $f_1(x_1,x_2)=(x_1+x_2)^2$; \item[(2)]$f_2(x_1,x_2)=\sqrt{x_1+x_2}$; \item[(3)]$f_3(x_1,x_2)= x_1x_2$; \item[(4)]$f_4(x_1,x_2)=e^{x_1+x_2}$; \item[(5)]$f_5(x_1,x_2)=e^{x_1x_2}$.
\end{enumerate}
Set $\varepsilon_i\sim \N(0,1)$ and $X_1,X_2\sim \text{Unif}(0,1)$, mutually independent, for $i=1,\ldots, n$. We consider sample sizes $n= 200, 500, 1000,$ and $2000$. To construct credible intervals we proposed, we choose $J=\ceil{n^{1/3}\log(\log n)}$. We compare our immersion credible interval (IB) and the recalibrated credible interval (IB(adj)) and the DHZ's confidence intervals with two confidence levels $0.95$ and $0.90$. The coverage percentage and the average length are recorded for $2000$ replications. The result is summarized in Table \ref{simu}.

The unadjusted credible intervals tend to overcover the true function value for large sample sizes and the coverage of the recalibrated credible intervals is more accurate to a different extent for different functions.
DHZ's method gives more accurate coverage at the given confidence level when the sample sizes are relatively smaller. However, our credible intervals are generally shorter and have less variation compared to DHZ's confidence interval. The variation of our method across different regression functions may be due to the roughness of the partition we used. In practice, we can set a slightly larger $J$ if the credible intervals can be computed in a reasonable time.

\begin{table}[ht!]
\caption{Coverage percentage (C) and length (L) comparison,} \label{simu}
\centering
\adjustbox{scale = 0.9}{
\begin{tabular}{|c|c|r|cc|cc|cc|}
\hline
\multirow{2}{*}{$f$} & \multirow{2}{*}{level} & \multirow{2}{*}{$n$} & \multicolumn{2}{c|}{IB} & \multicolumn{2}{c|}{IB(adj)} & \multicolumn{2}{c|}{DHZ}\\
\cline{4-9}
&&& C & L & C & L & C & L \\
\hline
\multirow{8}{*}{$f_1$} & \multirow{4}{*}{$0.05$} 
 & 200 &93.6 &0.903(0.145) & 90.0 & 0.805(0.132) & 92.4 & 1.138(0.600)\\
&& 500 & 98.8 & 0.777(0.111) & 97.7& 0.692(0.101) & 95.0 & 0.959(0.490)\\
&&1000 & 97.0 & 0.630(0.086) & 94.4& 0.562(0.078) & 94.8 & 0.827(0.435)\\
&&2000 &97.4 &0.535(0.072) &95.4 &0.476(0.066) & 94.9 & 0.686(0.324)\\
\cline{2-9}
&\multirow{4}{*}{0.10}
 & 200 &88.1 &0.761(0.126) &81.5 &0.668(0.112)& 86.1 & 0.898(0.473)\\
&& 500 &96.9 &0.656(0.097) &94.4 &0.576(0.087) & 90.0 & 0.757(0.387)\\
&&1000 & 92.8 & 0.532(0.075) &88.8 &0.467(0.068) & 88.8 & 0.652(0.343)\\
&&2000 & 94.3 & 0.451(0.063) &90.4 &0.397(0.057) & 89.7 & 0.541(0.256)\\
\hline

\multirow{8}{*}{$f_2$} & \multirow{4}{*}{$0.05$} 
 & 200 & 91.0 & 0.503(0.089) & 87.0 & 0.447(0.081) & 95.2 & 0.722(0.339)\\
&& 500 & 96.6 & 0.380(0.061) &93.8 & 0.338(0.055) & 95.4 & 0.546(0.303)\\
&&1000 & 94.9 & 0.308(0.047) & 91.7 & 0.274(0.043) & 94.8 & 0.439(0.252)\\
&&2000 &95.9 & 0.253(0.039) & 93.3 & 0.225(0.035) & 95.3 & 0.357(0.175)\\
\cline{2-9}
&\multirow{4}{*}{0.10}
 & 200 & 85.0 & 0.423(0.077) & 79.0 & 0.371(0.068) & 89.8 & 0.570(0.268)\\
&& 500 & 92.7 & 0.320(0.052) & 88.0 & 0.280(0.047) & 90.3 & 0.431(0.239)\\
&&1000 & 90.0 & 0.259(0.040) & 84.7 & 0.227(0.036) & 88.9 & 0.346(0.199)\\
&&2000 & 91.8 & 0.213(0.033) & 87.0&0.186 (0.030) & 90.6 & 0.281(0.138)\\
\hline

\multirow{8}{*}{$f_3$} & \multirow{4}{*}{$0.05$} 
 & 200 & 91.8 & 0.476(0.084) & 87.2 &0.423(0.076) & 94.7 & 0.740(0.410)\\
&& 500 & 96.0 & 0.371(0.061) & 93.4 & 0.329(0.055) & 95.0 & 0.532(0.246)\\
&&1000 & 95.0 & 0.293(0.046) & 91.6& 0.260(0.042) & 95.4 & 0.433(0.207)\\
&&2000 & 95.6 & 0.242(0.037)& 93.0 & 0.215(0.034) & 94.8 & 0.353(0.165)\\
\cline{2-9}
&\multirow{4}{*}{0.10}
 & 200 & 84.9& 0.400(0.072)& 79.6& 0.350(0.064) & 89.4 & 0.584(0.323)\\
&& 500 & 92.2 &0.311(0.053)& 87.8& 0.273(0.047) & 89.7 & 0.419(0.194)\\
&&1000 & 90.0 & 0.246(0.040) & 84.7 & 0.216(0.036) & 90.3 & 0.341(0.163)\\
&&2000 & 91.6 & 0.204(0.033)& 86.9 & 0.178(0.029) & 89.8 & 0.279(0.131)\\
\hline

\multirow{8}{*}{$f_4$} & \multirow{4}{*}{$0.05$} 
 & 200 & 97.0 & 1.260(0.188) &94.4 &1.122(0.170) & 89.2 & 1.234(0.621)\\
&& 500 & 99.8 &1.086(0.133)& 99.5 &0.968(0.120) & 92.8 & 1.077(0.538)\\
&&1000 & 99.0& 0.869(0.100)& 97.9& 0.774(0.092) & 94.4 & 0.927(0.437)\\
&&2000 & 99.7 & 0.728(0.083) & 98.2 & 0.649(0.075) & 94.8 & 0.800(0.405)\\
\cline{2-9}
&\multirow{4}{*}{0.10}
 & 200 & 92.8 &1.063(0.162) & 87.9 &0.932 (0.144) & 83.4 & 0.974(0.490)\\
&& 500 & 99.3& 0.917(0.115)& 98.4 &0.805(0.103) & 87.0 & 0.850(0.424)\\
&&1000 & 97.0& 0.733(0.088)& 93.8 & 0.644(0.079) & 89.1 & 0.731(0.345)\\
&&2000 & 97.2 & 0.615(0.072) & 94.9 &0.540(0.064) & 89.3 & 0.631(0.320)\\
\hline

\multirow{8}{*}{$f_5$} & \multirow{4}{*}{$0.05$} 
 & 200 & 94.0 &0.540(0.093)& 90.2 &0.480(0.083) & 95.4 & 0.798(0.398)\\
&& 500 & 97.4 &0.432(0.068) & 95.6& 0.384(0.062) & 94.2 & 0.597(0.316)\\
&&1000 & 96.5 & 0.338(0.051) & 93.4& 0.301(0.047) & 95.1 & 0.491(0.265)\\
&&2000 & 96.9 & 0.280(0.042) & 94.4 &0.249(0.038) & 95.5 & 0.401(0.195)\\
\cline{2-9}
&\multirow{4}{*}{0.10}
 & 200 & 88.2 & 0.454(0.079) & 82.6 &0.398(0.070) & 90.3 & 0.629(0.314)\\
&& 500 & 94.4& 0.364(0.059)& 90.6 &0.319(0.053) & 89.1 & 0.471(0.249)\\
&&1000 & 92.2 & 0.285(0.045)& 87.8 &0.249(0.040) & 91.1 & 0.387(0.209)\\
&&2000 & 93.4 & 0.236(0.036)& 89.2 & 0.207(0.033) & 90.1 & 0.317(0.154)\\
\hline
\end{tabular}
}
\end{table}

\section{Proofs}\label{sec:proof}
\subsection{Proof of Theorem \ref{thm: posterior_distribution}}

As in the univariate case, the immersion posterior is the induced distribution of a functional of a certain stochastic process. 
The proof uses a truncation of the domain to establish the convergence of the underlying stochastic processes. Let 
\begin{align}
	\label{f*value}
 f_{*,c}(\bx_0) = \max_{\substack{c^{-\gamma}\bm{1}\preceq\bm{u}\preceq c\bm{1}, \\ u_k \leq x_{0,k}, \\ s+1\leq k \leq d}}\min_{\substack{c^{-\gamma}\bm{1}\preceq\bm{v}\preceq c\bm{1}, \\ v_k \leq 1-x_{0,k}, \\ s+1\leq k \leq d}} \frac{\sum_{\bj\in[j(-\bm{u}):j(\bm{v})]}N_{\bj}\theta_{\bj}}{N_{[j(-\bm{u}):j(\bm{v})]}}, 
\end{align}
where $\gamma$ is a positive constant to be determined later. We also introduce the notations $W^{\ast}_n=\omega_n^{-1}(f_*(\bx_0)-f_0(\bx_0))$, $W^{\ast}_{n,c}=\omega_n^{-1}(f_{*,c}(\bx_0)-f_0(\bx_0))$, 
\begin{align*}
    W_c &=
    \sup_{\substack{c^{-\gamma}\bm{1}\preceq\bm{u}\preceq c\bm{1}, \\ u_k \leq x_{0,k}, \\ s+1\leq k \leq d}}\inf_{\substack{c^{-\gamma}\bm{1}\preceq\bm{v}\preceq c\bm{1}, \\  v_k \leq 1- x_{0,k}, \\ s+1 \leq k \leq d}}\Bigg\{
        \frac{\sigma_0 H_1(\bm{u},\bm{v})}{\prod_{k=1}^s(u_k+v_k)D_s(\bm{u},\bm{v})}\\
       & \hspace{2em} +\frac{ \sigma_0 H_2(\bm{u},\bm{v})}{\prod_{k=1}^s(u_k+v_k)D_s(\bm{u},\bm{v})}+\sum_{\bm{l}\in L^*} \frac{\partial^{\bm{l}}f_0(x_0)}{(\bm{l}+\bm{1})!} 
        \prod_{k=1}^s \frac{v_k^{l_k+1}-(-u_k)^{l_k+1}}{u_k+v_k}
        \Bigg\},\\
    W &= \sup_{\substack{\bm{u}\succeq \bm{0}, \\ u_k \leq x_{0,k}, \\ s+1\leq k \leq d}}\inf_{\substack{\bm{v} \succeq\bm{0},\\  v_k \leq 1- x_{0,k}, \\ s+1 \leq k \leq d}}\Bigg\{
        \frac{\sigma_0 H_1(\bm{u},\bm{v})}{\prod_{k=1}^s(u_k+v_k)D_s(\bm{u},\bm{v})}\\
     &   \hspace{2em} +\frac{ \sigma_0 H_2(\bm{u},\bm{v})}{\prod_{k=1}^s(u_k+v_k)D_s(\bm{u},\bm{v})} +\sum_{\bm{l}\in L^*} \frac{\partial^{\bm{l}}f_0(x_0)}{(\bm{l}+\bm{1})!} 
        \prod_{k=1}^s \frac{v_k^{l_k+1}-(-u_k)^{l_k+1}}{u_k+v_k}
        \Bigg\}.
\end{align*}

The proof of the theorem is carried out in several steps using Proposition B.1 of \cite{kang_supp}, presented as lemmas below.

\begin{lemma}\label{lemma:weakconv}
Under the conditions of Theorem \ref{thm: posterior_distribution}, for every $c>0$ and $\gamma>0$, $\cL(W^{\ast}_{n,c}|\mathbb{D}_n)$ converges weakly to $\cL(W_c|H_1)$ as random probability measures.
\end{lemma}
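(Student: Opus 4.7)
The plan is to represent $W^{\ast}_{n,c}=\Psi(V_n)$, where $\Psi$ denotes the truncated max--min functional on the compact index set $\mathcal{K}_c=\{(\bm{u},\bm{v}):c^{-\gamma}\mathbf{1}\preceq\bm{u},\bm{v}\preceq c\mathbf{1},\;u_k\le x_{0,k},\;v_k\le 1-x_{0,k}\text{ for }s+1\le k\le d\}$ and $V_n(\bm{u},\bm{v})=\omega_n^{-1}\{M_n(\bm{u},\bm{v})-f_0(\bx_0)\}$ is the centered and rescaled block-weighted mean appearing inside \eqref{f*value}. Using the conjugate form \eqref{eq:unrestricted posterior}, I would write $\theta_{\bj}=\widehat{\theta}_{\bj}+\xi_{\bj}$ with $\xi_{\bj}\mid\mathbb{D}_n,\sigma\sim\mathrm{N}(0,\sigma^2/(N_{\bj}+\lambda_{\bj}^{-2}))$ independent across $\bj$, and substitute $Y_i=f_0(\bX_i)+\varepsilon_i$ inside $\widehat\theta_{\bj}$. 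This yields a decomposition $V_n=V_n^{\mathrm{emp}}+V_n^{\mathrm{post}}+V_n^{\mathrm{bias}}+R_n$, where $V_n^{\mathrm{emp}}$ is the rescaled block average of the errors $\varepsilon_i$ (a function of the data only), $V_n^{\mathrm{post}}$ is the contribution of the posterior Gaussian noise $\xi_{\bj}$, $V_n^{\mathrm{bias}}$ is the deterministic local integral of $f_0-f_0(\bx_0)$ over each block, and $R_n$ collects ridge-type corrections involving $\lambda_{\bj}^{-2}$ and $\zeta_{\bj}$ together with empty-cell effects.

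Next I would establish joint weak convergence of $(V_n^{\mathrm{emp}},V_n^{\mathrm{post}})$ in $C(\mathcal{K}_c)^2$ to $(\sigma_0 H_1/Q,\sigma_0 H_2/Q)$ with $H_1\perp H_2$, where $Q(\bm{u},\bm{v})=\prod_{k=1}^s(u_k+v_k)\,D_s(\bm{u},\bm{v})$. For $V_n^{\mathrm{emp}}$ this is a standard multi-indexed CLT plus tightness argument, of the type used for the block estimator in \cite{han2020}; the lower truncation at $c^{-\gamma}\mathbf{1}$ ensures that $N_{[j(-\bm u):j(\bm v)]}$ is of order $n\omega_n\prod_{k=1}^s r_{n,k}^{-1}$ uniformly with high probability, so the limit covariance is exactly \eqref{H covariance}. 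For $V_n^{\mathrm{post}}$, conditionally on the data it is a centered Gaussian process, and because $N_{\bj}+\lambda_{\bj}^{-2}=N_{\bj}(1+o(1))$ uniformly over cells intersecting the blocks of interest, its conditional covariance kernel concentrates in $\P_0$-probability to the same kernel \eqref{H covariance}; independence of the limiting $H_2$ from both the data and $H_1$ is a consequence of the fact that the $\xi_{\bj}$ are drawn independently of $\{\varepsilon_i\}$ given the cell counts. The deterministic bias $V_n^{\mathrm{bias}}$ converges uniformly on $\mathcal{K}_c$ to $\sum_{\bm l\in L^{\ast}}\partial^{\bm l}f_0(\bx_0)\prod_{k=1}^s[v_k^{l_k+1}-(-u_k)^{l_k+1}]/[(\bm l+\mathbf{1})!(u_k+v_k)]$ by a Riemann-sum-to-integral calculation under Assumption~\ref{assumption_approximation}, while $R_n\to 0$ uniformly on $\mathcal{K}_c$ under the sieve bounds $J_k\gg r_{n,k}^{-1}$ and $\prod_k J_k\ll n\omega_n$.

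With these pieces in hand, the sup--inf functional $\Psi$ is continuous on $C(\mathcal{K}_c)$ because $\mathcal{K}_c$ is compact and $Q$ is bounded away from zero on it, so by the continuous mapping theorem $W^{\ast}_{n,c}\rightsquigarrow W_c$ jointly with the data-driven piece. To upgrade this to the stated convergence of the random probability measures $\cL(W^{\ast}_{n,c}\mid\mathbb{D}_n)\rightsquigarrow\cL(W_c\mid H_1)$, I would invoke Proposition~B.1 of \cite{kang_supp}, whose hypothesis is exactly such a joint weak-convergence statement between the data-measurable and posterior-only components of $V_n$. The main technical obstacle is this joint step: $V_n^{\mathrm{post}}$ is only \emph{conditionally} Gaussian with a data-dependent covariance, so to obtain a Gaussian limit independent of $H_1$ one must show the conditional covariance concentrates uniformly on $\mathcal{K}_c$ and then apply the general fact that conditionally Gaussian processes with deterministic-limit covariances converge in distribution independently of the conditioning. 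The ancillary but delicate points are controlling boundary cells where $N_{\bj}$ may be small, verifying the ridge remainders in $R_n$ are truly negligible uniformly on $\mathcal{K}_c$, and checking that the truncation at $c^{-\gamma}$ is compatible with $\Psi$ so that $W^{\ast}_{n,c}$ and $W_c$ are continuous in $V_n$ and $V$ respectively.
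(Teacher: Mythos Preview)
Your proposal is correct and follows essentially the same route as the paper: your decomposition $V_n=V_n^{\mathrm{emp}}+V_n^{\mathrm{post}}+V_n^{\mathrm{bias}}+R_n$ is exactly the paper's $B_{1n}+A_n+B_{2n}+A'_n$ (cf.\ \eqref{An}--\eqref{Bn} and \eqref{equation:bndecomposition}), and the four convergence claims you list are precisely Lemmas~\ref{lemma:h1nconv}, \ref{lemma:Anweakconv}, \ref{lemma:b2nconv}, and \ref{lemma:Anprimeconv}, after which the continuous mapping theorem for the max--min functional is applied.

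Two small corrections. First, the processes $V_n$ are piecewise constant in $(\bm u,\bm v)$ (they change value only when $j(-\bm u)$ or $j(\bm v)$ jumps), so the right ambient space is $\LL_\infty(\mathcal{K}_c)$ rather than $C(\mathcal{K}_c)$; the max--min functional is continuous there as well, so nothing is lost. Second, Proposition~B.1 of the supplement is the domain-truncation lemma (it lets one pass from $W^{\ast}_{n,c}$ to $W^{\ast}_n$ by sending $c\to\infty$), not a ``joint-to-conditional'' device; for the present lemma the upgrade from joint weak convergence to convergence of the random conditional laws $\cL(W^{\ast}_{n,c}\mid\mathbb{D}_n)$ is obtained directly, since conditional on $\mathbb{D}_n$ the process $A_n$ is Gaussian with covariance converging in $\P_0$-probability to that of $\sigma_0 H_2$ (Lemma~\ref{lemma:Anweakconv}), while $A'_n+B_n$ is $\mathbb{D}_n$-measurable and converges weakly to $\sigma_0 H_1/Q$ plus the drift. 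You in fact identify this as the main technical point, so the misattribution of the proposition is harmless.
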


\begin{proof}
 For $\bm{t}\in \RR^d$, let $j(\bm{t})=\ceil{(\bx_0 + \bm{t}\circ \bm{r}_n)\circ \bJ}$. For every $\bu,\bv\succeq \bm{0}$, we can write 
\begin{equation}
	\frac{\sum_{\bj\in [j(-\bm{u}):j(\bm{v})]} N_{\bj} \theta_{\bj}}{\sum_{\bj\in [j(-\bm{u}):j(\bm{v})]} N_{\bj}}-f_0(\bx_0)=A_n(\bm{u},\bm{v};\bm{\theta})+A_n'(\bm{u},\bm{v})+B_n(\bm{u},\bm{v}),
\end{equation} 
and $
	W^{\ast}_{n,c}= \displaystyle \max_{c^{-\gamma}\bm{1}\preceq\bu\preceq c\bm{1}}\min_{c^{-\gamma}\bm{1}\preceq\bu\preceq c\bm{1}} \{A_n(\bm{u},\bm{v};\bm{\theta}) + A_n'(\bm{u},\bm{v}) + B_n(\bm{u},\bm{v})\}$, 
where 
\begin{align}
	\label{An}
    A_n(\bm{u},\bm{v};\bm{\theta})&=\omega_n^{-1}\frac{\sum_{\bj\in [j(-\bm{u}):j(\bm{v})]} N_{\bj} (\theta_{\bj}- \E[\theta_{\bj}|\mathbb{D}_n])}{\sum_{\bj\in [j(-\bm{u}):j(\bm{v})]} N_{\bj}},\\
    \label{An'}
    A_n'(\bm{u},\bm{v})&=\omega_n^{-1}\frac{\sum_{\bj\in [j(-\bm{u}):j(\bm{v})]} N_{\bj} ( \E[\theta_{\bj}|\mathbb{D}_n]-\bar{Y}|_{I_{\bj}} ) }{\sum_{\bj\in [j(-\bm{u}):j(\bm{v})]} N_{\bj}},\\
    \label{Bn}
    B_n(\bm{u},\bm{v}) &=\omega_n^{-1}(\overline{Y}|_{I_{[j(-\bm{u}):j(\bm{v})]}}-f_0(\bx_0)).
\end{align}

Since the max-min functional is continuous on the space $\LL_{\infty}([c^{-\gamma}\bm{1}, c \bm{1}]\times[c^{-\gamma}\bm{1}, c \bm{1}])$, 
it suffices to show that $A_n+A'_n+B_n$ converges weakly in $\LL_{\infty}([c^{-\gamma}\bm{1}, c \bm{1}]\times[c^{-\gamma}\bm{1}, c \bm{1}])$, conditional on the data $\mathbb{D}_n$. By Lemma B.2 of \cite{kang_supp} and Lemma \ref{lemma:Anweakconv}, we prove the weak convergence of $A_n$. We show that $A'_n$ converges to zero uniformly in Lemma \ref{lemma:Anprimeconv}. The convergence of $B_n$ is completed by combining Lemma B.2 of \cite{kang_supp}, Lemma \ref{lemma:h1nconv} and Lemma \ref{lemma:b2nconv}.
\end{proof}

\begin{lemma}\label{lemma:Anweakconv}
   Under the conditions of Theorem \ref{thm: posterior_distribution}, for every $c>0$, let $\HH_{2,n}(\bm{u},\bm{v};\bm{\theta})= \omega_n \sum_{\bj\in [j(-\bm{u}):j(\bm{v})]} N_{\bj} (\theta_{\bj}- \E[\theta_{\bj}|\mathbb{D}_n])$. Then $\HH_{2,n}$ converges weakly to a centered Gaussian process $H_2$ in $\LL_{\infty}([\bm{0},c\bm{1}]\times[\bm{0},c\bm{1}])$ for every $c>0$ in $\P_0$-probability. 
\end{lemma}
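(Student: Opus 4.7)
The plan is to exploit the fact that, conditional on the data $\mathbb{D}_n$, the process $\HH_{2,n}(\bm u,\bm v;\bm\theta)$ is Gaussian, reducing the weak convergence problem to (a) convergence of the conditional covariance function in $\P_0$-probability, and (b) an asymptotic equicontinuity / tightness bound. Throughout I treat $\sigma$ as fixed at $\sigma_0$; this is justified by Lemma B5 of \cite{kang_supp} on posterior concentration of $\sigma^2$, so the substitution creates no asymptotic loss.

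First I would note that, by \eqref{eq:unrestricted posterior} and prior independence, the variables $\{\theta_{\bj}-\E[\theta_{\bj}|\mathbb{D}_n]:\bj\in[\bm 1:\bJ]\}$ are, conditional on $\mathbb{D}_n$, independent $\mathrm{N}(0,\sigma^2/(N_{\bj}+\lambda_{\bj}^{-2}))$. Consequently, as a finite linear combination of independent conditionally Gaussian variables, $\HH_{2,n}(\bm u,\bm v;\bm\theta)$ is a centered Gaussian process with conditional covariance
\[
C_n(\bm u,\bm v;\bm u',\bm v')=\omega_n^2\,\sigma^2\sum_{\bj\in[j(-\bm u\wedge\bm u'):j(\bm v\wedge\bm v')]}\frac{N_{\bj}^2}{N_{\bj}+\lambda_{\bj}^{-2}},
\]
using that $[j(-\bm u):j(\bm v)]\cap[j(-\bm u'):j(\bm v')]=[j(-(\bm u\wedge\bm u')):j(\bm v\wedge\bm v')]$.

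Second, I would analyse $C_n$. Because $\lambda_{\bj}^{-2}\le b^{-1}$ and the typical cell counts $N_{\bj}$ are of order $n/\prod_k J_k$, one has $N_{\bj}^2/(N_{\bj}+\lambda_{\bj}^{-2})=N_{\bj}+O(1)$ for the $\bj$ with nonvanishing $N_{\bj}$, so up to a negligible remainder $C_n\approx\omega_n^2\sigma^2\sum_{\bj} N_{\bj}$, and this last sum equals $\omega_n^2\sigma^2\#\{i:\bX_i\in I_{[j(-\bm u\wedge\bm u'):j(\bm v\wedge\bm v')]}\}$. By the law of large numbers and the boundedness and local continuity of $g$ in Assumption~\ref{assumption_data}, together with the scaling identity $\omega_n^2\cdot n\prod_{k=1}^s r_{n,k}=1$ coming from $\omega_n=n^{-1/(2+\sum_k \beta_k^{-1})}$, one gets
\[
C_n(\bm u,\bm v;\bm u',\bm v')\;\to_{\P_0}\;\sigma_0^2\prod_{k=1}^s(u_k\wedge u'_k+v_k\wedge v'_k)\,D_s(\bm u\wedge\bm u',\bm v\wedge\bm v'),
\]
uniformly over the compact $[\bm 0,c\bm 1]^{4d}$. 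This yields finite-dimensional weak convergence, in $\P_0$-probability, of $(\HH_{2,n}(\bm u_i,\bm v_i))_{i=1}^m$ to the corresponding finite-dimensional Gaussian distribution of $H_2$.

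Third, for tightness I would control the intrinsic covariance pseudo-metric
\[
\rho_n^2\bigl((\bm u,\bm v),(\bm u',\bm v')\bigr)=\E\bigl[\bigl(\HH_{2,n}(\bm u,\bm v)-\HH_{2,n}(\bm u',\bm v')\bigr)^2\bigm|\mathbb{D}_n\bigr].
\]
Writing the increment as a signed sum of $N_{\bj}(\theta_{\bj}-\E[\theta_{\bj}|\mathbb{D}_n])$ over the symmetric difference of the two blocks and using conditional independence, one obtains $\rho_n^2\lesssim \omega_n^2\sigma^2\sum_{\bj\in\triangle}N_{\bj}$, which by the same LLN argument is bounded, on $\P_0$-high-probability events, by a constant multiple of $\|(\bm u,\bm v)-(\bm u',\bm v')\|_\infty$ on the compact domain. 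This Lipschitz bound on $\rho_n^2$ gives a $\P_0$-uniform entropy integral for the conditional Gaussian process, and an application of Dudley's entropy bound (or standard Gaussian chaining, since the metric is of order $\sqrt{\|\cdot\|_\infty}$) yields asymptotic equicontinuity in $\LL_\infty([\bm 0,c\bm 1]^{2})$.

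Combining finite-dimensional convergence with asymptotic equicontinuity gives the claimed weak convergence of conditional laws $\mathcal{L}(\HH_{2,n}\mid\mathbb{D}_n)$ to $\mathcal{L}(H_2)$ in $\P_0$-probability. The main technical obstacle I expect is the tightness step: ensuring the covariance-metric bound holds uniformly in $(\bm u,\bm v)$ despite the piecewise-constant, data-dependent structure of $\HH_{2,n}$ and the possibility of small $N_{\bj}$ near the boundary. This is handled by restricting to the high-probability event on which all $N_{\bj}$ in the relevant blocks are of the expected order $n\prod_{k=1}^d J_k^{-1}$, together with the lower bound $\min\lambda_{\bj}^{2}\ge b$ controlling the remainder in $N_{\bj}^2/(N_{\bj}+\lambda_{\bj}^{-2})$.
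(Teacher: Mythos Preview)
Your proposal is correct and the finite-dimensional part matches the paper's argument: both compute the conditional covariance $\omega_n^2\sigma_n^2\sum N_{\bj}^2/(N_{\bj}+\lambda_{\bj}^{-2})$ and reduce it to $\omega_n^2\sigma_0^2\sum N_{\bj}$ on the high-probability event where cell counts are of order $n/\prod_k J_k$, then invoke the LLN-type limit (the paper's Lemma~B.2) to identify the covariance kernel \eqref{H covariance}. The difference lies entirely in the tightness step. The paper does \emph{not} exploit conditional Gaussianity there; instead it verifies the partition condition of Theorem~18.14 of \cite{vandervaart2000asymptotic} by recognizing that, for fixed corner indices, the block-summed increments $S_{(-j(-\bm u),j(\bm v))}$ form a multiparameter martingale in each coordinate, and then applies a $2d$-fold iterated Doob-type maximal inequality (their Lemma~B.6) with exponent $p=4d+2$ to convert a high-moment bound on the full increment into a maximal probability bound. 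Your route---bounding the intrinsic pseudo-metric $\rho_n^2$ by a constant multiple of $\|(\bm u,\bm v)-(\bm u',\bm v')\|_\infty$ on $E_n$ and invoking Dudley's entropy integral for Gaussian processes---is shorter and more natural here, precisely because the conditional law is Gaussian and second-moment control suffices. The paper's martingale argument is heavier (it needs the $(4d+2)$-th Gaussian moment and the cardinality bound on the shell $\mathcal{J}(\bm s_{\bm t},\bm s_{\bm r})\setminus\mathcal{J}(\bm s_{\bm t-\bm 1},\bm s_{\bm r-\bm 1})$), but would transfer to non-Gaussian posteriors; your argument is tied to the normal conjugate prior. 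One small point to make explicit when you write it up: the bound $\rho_n^2\lesssim\|\cdot\|_\infty$ only holds up to an additive $O(1/\min_k r_{n,k}J_k)$ term coming from the ceiling in $j(\cdot)$, but since $J_k\gg r_{n,k}^{-1}$ this is $o(1)$ uniformly and does not affect the entropy-integral conclusion for fixed $\delta$.
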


\begin{proof}
By \eqref{eq:unrestricted posterior}, Lemmas B.2, B.4, and B.5 of \cite{kang_supp}, the covariance kernel of $\HH_{2,n}$ given $(\mathbb{D}_n, \sigma^2_n)$, is given by 
$
 \omega_n^2\sigma^2_n\sum_{\bj\in [j(-\bm{u}\wedge \bm{u}'):j(\bm{v}\wedge \bm{v}')]}{N_{\bj}^2}/{(N_{\bj}+\lambda_{\bj}^{-2})}$, which converges in $\P_0$ probability to $\sigma^2_0 \prod_{k=1}^{s}\left(u_k\wedge u'_k+v_k\wedge v'_k\right)D_s(\bm{u}\wedge \bm{u}', \bm{v}\wedge \bm{v}')$. 
Thus finite-dimensional distributions of $\HH_{2,n}$ converge weakly to those of a centered Gaussian process $\sigma_0 H_2$ in  $\P_0$-probability.

Next we shall show that $\mathcal{L}(\HH_{2,n}(\bm{u},\bm{v};\bm{\theta}):(\bm{u},\bm{v})\in [\bm{0},c\bm{1}]\times[\bm{0},c\bm{1}])$ is tight on $\LL_{\infty}([\bm{0},c\bm{1}]\times[\bm{0},c\bm{1}])$ for any $c>0$ in $\P_0$-probability. In view of Theorem 18.14 of \cite{vandervaart2000asymptotic}, we need to verify that, for every $\epsilon > 0$ and $\eta > 0$, there exists a finite partition $\{ T_p : p\leq K\}$ of $[\bm{0},c\bm{1}]\times [\bm{0},c\bm{1}]$ with $K$ depending only on $\epsilon$ and $\eta$ such that
\begin{align*}
    \P\big(\sup_{(\bm{u}_1,\bm{v}_1),(\bm{u}_2,\bm{v}_2)\in T_p} \{ |\HH_{2,n}(\bm{u}_1,\bm{v}_1)-\HH_{2,n}(\bm{u}_2,\bm{v}_2)|: 
    1\leq p \leq K\}>\epsilon|\mathbb{D}_n\big)< \eta
\end{align*}
with $\P_0$-probability tending to $1$. Let $\delta>0$, to be determined later depending only on $\epsilon$ and $\eta$. Let $0=s_0 < s_1 <\ldots < s_l = c$ with $(s_{t-1},s_t]$ having equal lengths at least $\delta$ and $l\leq 2c/\delta$. 
We choose a partition $\{T_p: p\leq K\}$ of $[\bm{0},c\bm{1}]\times [\bm{0},c\bm{1}]$ to be 
\begin{equation}
\label{partition}
\mathcal{P}(\delta)=\{\prod_{k=1}^{d}(s_{t_k-1},s_{t_k}] \times \prod_{k=1}^{d}(s_{r_k-1},s_{r_k}]: t_k, r_k\in \{1,\ldots, l\}\}, 
\end{equation}
with cardinality $K=\#\mathcal{P}(\delta)=l^{2d}$. It suffices to verify that, for any $p\leq K$,
\begin{align*}
   \P\big(\sup_{(\bm{u}_1,\bm{v}_1),(\bm{u}_2,\bm{v}_2)\in T_p} \{ |\HH_{2,n}(\bm{u}_1,\bm{v}_1)-\HH_{2,n}(\bm{u}_2,\bm{v}_2)| \}>\epsilon|\mathbb{D}_n\big)< \eta \big(\frac{\delta}{2c}\big)^{2d}.
\end{align*}
Let $\mathcal{J}(\bm{u},\bm{v})=[j(-\bm{u}):j(\bm{v})]$.
For $(\bm{u}_1,\bm{v}_1),(\bm{u}_2,\bm{v}_2)$, we write $\HH_{2,n}(\bm{u}_1,\bm{v}_1)-\HH_{2,n}(\bm{u}_2,\bm{v}_2)$ as the difference of the sums of $ \omega_n N_{\bj} (\theta_{\bj}- \E[\theta_{\bj}|\mathbb{D}_n])$ over the sets $\mathcal{J}(\bu_1,\bv_1)\setminus\mathcal{J}(\bu_1\wedge\bu_2,\bv_1\wedge\bv_2)$ and $\mathcal{J}(\bu_2,\bv_2)\setminus\mathcal{J}(\bu_1\wedge\bu_2,\bv_1\wedge\bv_2)$, after canceling out the common terms. Thus its absolute value can be bounded by the sum of the corresponding absolute values over these two index sets. To verify tightness, it then suffices to show that 
\begin{align*}
	\P\big(\max \big\{ \omega_n \big|{\sum_{ \mathcal{J}(\bm{u},\bm{v})\setminus\mathcal{J}(\bm{s}_{\bm{t}-\bm{1}},\bm{s}_{\bm{r}-\bm{1}})} N_{\bj} (\theta_{\bj}- \E[\theta_{\bj}|\mathbb{D}_n])}\big|: (\bm{u},\bm{v})\in T_p\big\}>\frac{\epsilon}{4}\big|\mathbb{D}_n\big)
\end{align*}
is bounded by ${\eta}({\delta}/{(2c)})^{2d}/4$, 
with $T_p=\prod_{k=1}^d (s_{t_k-1}, s_{t_k}] \times \prod_{k=1}^d (s_{r_k-1}, s_{r_k}]$, for any $\bm{s}_{\bm{t}}=(s_{t_1},\ldots,s_{t_d})$ and $\bm{s}_{\bm{r}}=(s_{r_1},\ldots,s_{r_d})$. 

Let $S_{(-j(-\bu),j(\bv))} = \displaystyle\sum_{ \mathcal{J}(\bm{u},\bm{v})\setminus \mathcal{J}(\bm{s}_{\bm{t}-\bm{1}},\bm{s}_{\bm{r}-\bm{1}})} N_{\bj} (\theta_{\bj}- \E[\theta_{\bj}|\mathbb{D}_n])$, 
a collection of random variables indexed by a $2d$-dimensional vector in a finite index set. The negative sign in front of $j(-\bu)$ in the subscript of $S$ is to make the $\sigma$-fields 
\begin{align*}
	\mathscr{F}_j^{(k)}=
	\begin{cases} 
		\sigma\langle N_{\bj} (\theta_{\bj}- \E[\theta_{\bj}|\mathbb{D}_n]) : -(j(\bm{s}_{\bm{t}-\bm{1}}))_{k} < -(j(-\bu))_{k} \leq j \rangle, &\text{ if } k\leq d,\\
		\sigma\langle N_{\bj} (\theta_{\bj}- \E[\theta_{\bj}|\mathbb{D}_n]) : (j(\bm{s}_{\bm{r}-\bm{1}}))_{k-d}<(j(\bv))_{k-d}\leq j\rangle, &\text{ if } k > d.
	\end{cases}
\end{align*}
 increase with respect to each of the first $d$ components in the subscript. 
In the sum above, all $\bj$ are in $ \mathcal{J}(s_{\bm{t}},\bm{s}_{\bm{r}})\setminus\mathcal{J}(\bm{s}_{\bm{t}-\bm{1}},s_{\bm{r}-\bm{1}})$. We note that for every $k\leq 2d$, the random sequence $\{S_{(j_1, \ldots, j_{k-1}, j, j_{k+1},\ldots,j_{2d})}, \mathscr{F}^{(k)}_j\}$ is a martingale. Applying Lemma B.6 of \cite{kang_supp} with $p=4d+2$, we can get an upper bound of the probability of the maximal deviation needed to verify tightness to be a constant multiple of  
\begin{align}
\label{upperbound1}
(\omega_n/\epsilon)^{(4d+2)} \E\big(\big|{\sum_{\mathcal{J}(s_{\bm{t}},s_{\bm{r}})\setminus\mathcal{J}(s_{\bm{t}}-\bm{1},s_{\bm{r}}-\bm{1})}N_{\bj}(\theta_{\bj}-\E[\theta_{\bj}|\mathbb{D}_n])}\big|^{4d+2}\big|\mathbb{D}_n\big).
\end{align}
Observe that $\#\mathcal{J}(\bm{s}_{\bm{t}},\bm{s}_{\bm{r}})\leq \prod_k (r_{n,k} J_k(s_{t_k}+s_{r_k})+2)$, $\#\mathcal{J}(\bm{s}_{\bm{t}-\bm{1}},\bm{s}_{\bm{r}-\bm{1}})\geq \prod_k r_{n,k} J_k(s_{t_k}+s_{r_k}-2\delta)$. As $\delta \leq s_{t_k},s_{r_k}\leq c$ and $J_k\gg r_{n,k}^{-1}$, it follows that the cardinality of the index set $\mathcal{J}(\bm{s}_{\bm{t}},\bm{s}_{\bm{r}})\setminus \mathcal{J}(\bm{s}_{\bm{t}-\bm{1}},\bm{s}_{\bm{r}-\bm{1}})$ is bounded by a multiple of 
\begin{align*}
   \prod_{k=1}^d r_{n,k} J_k \big(\prod_{k=1}^d(s_{t_k}+s_{r_k})- \prod_{k=1}^d(s_{t_k}+s_{r_k}-2\delta)\big)
    \leq (2d\delta)(2c)^{d-1}\prod_{k=1}^d r_{n,k} J_k, 
\end{align*}
where the last inequality follows from Lemma B.7 of \cite{kang_supp}.

The variance $\sigma^2_n N_{\bj}^2/(N_{\bj}+\lambda_{\bj}^{-2})\lesssim n(\prod_{k=1}^d J_k)^{-1}$ with $\P_0$-probability tending to $1$ by Lemma B.4 of \cite{kang_supp}. Hence \eqref{upperbound1} is bounded by a constant multiple of 
\begin{align*}
    &\epsilon^{-(4d+2)}\omega_n^{4d+2}\bigg(  \frac{n\# (\mathcal{J}(s_{\bm{t}},s_{\bm{r}})\setminus\mathcal{J}(s_{\bm{t}-\bm{1}},s_{\bm{r}-\bm{1}}))}{\prod_{k=1}^d J_k}\bigg)^{2d+1}\\
    & \qquad \lesssim \epsilon^{-(4d+2)}\omega_n^{4d+2} \big(\prod_{k=1}^d r_{n,k}\big)^{2d+1} n^{2d+1} \delta^{2d+1}, 
\end{align*}
which simplifies to $\epsilon^{-(4d+2)}\delta^{2d+1}$.
With $\delta$ chosen a sufficiently small constant multiple of $\eta\epsilon^{4d+2}$, the tightness condition is verified.
\end{proof}

\begin{lemma}\label{lemma:Anprimeconv}
    Under the conditions of Theorem \ref{thm: posterior_distribution}, $A'_n(\bm{u},\bm{v})$ converges to $0$ in $\P_0$-probability uniformly in $(\bu,\bv)$. 
\end{lemma}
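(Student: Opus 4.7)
The plan is to exploit the explicit posterior mean formula \eqref{eq:unrestricted posterior} and observe that $\E[\theta_{\bj}|\mathbb{D}_n]-\bar{Y}|_{I_{\bj}}$ is a shrinkage correction whose size is controlled by $\lambda_{\bj}^{-2}/(N_{\bj}+\lambda_{\bj}^{-2})$, uniformly bounded by $b^{-1}$. Concretely, for $N_{\bj}>0$,
\begin{align*}
N_{\bj}\bigl(\E[\theta_{\bj}|\mathbb{D}_n]-\bar{Y}|_{I_{\bj}}\bigr)=\frac{N_{\bj}\lambda_{\bj}^{-2}(\zeta_{\bj}-\bar{Y}|_{I_{\bj}})}{N_{\bj}+\lambda_{\bj}^{-2}},
\end{align*}
and this quantity vanishes when $N_{\bj}=0$ by the convention $N_{\bj}\bar{Y}|_{I_{\bj}}=\sum_i Y_i\Ind\{\bX_i\in I_{\bj}\}=0$ and $\E[\theta_{\bj}|\mathbb{D}_n]=\zeta_{\bj}$. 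Using $N_{\bj}/(N_{\bj}+\lambda_{\bj}^{-2})\leq 1$, $\lambda_{\bj}^{-2}\leq b^{-1}$, $\max_{\bj}|\zeta_{\bj}|<\infty$, and the split $\bar{Y}|_{I_{\bj}}=\bar{f_0}|_{I_{\bj}}+\bar{\varepsilon}|_{I_{\bj}}$ with $|\bar{f_0}|_{I_{\bj}}|\leq \pnorm{f_0}{\infty}<\infty$ (multivariate monotone functions on $[0,1]^d$ are bounded), the triangle inequality yields
\begin{align*}
|A'_n(\bu,\bv)|\lesssim \frac{\omega_n^{-1}}{N_{[j(-\bu):j(\bv)]}}\bigl(M(\bu,\bv)+S(\bu,\bv)\bigr),
\end{align*}
where $M(\bu,\bv)=\#\{\bj\in[j(-\bu):j(\bv)]\}$ and $S(\bu,\bv)=\sum_{\bj\in[j(-\bu):j(\bv)]}|\bar{\varepsilon}|_{I_{\bj}}|\Ind\{N_{\bj}>0\}$.

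For the deterministic piece, standard binomial concentration (cf.\ Lemma B.4 of \cite{kang_supp}) gives $N_{[j(-\bu):j(\bv)]}\gtrsim n\prod_{k=1}^s r_{n,k}\asymp \omega_n^{-2}$ uniformly for $\bu,\bv\succeq c^{-\gamma}\bm{1}$, while counting grid cells gives $M(\bu,\bv)\lesssim (\prod_{k=1}^d J_k)\prod_{k=1}^s r_{n,k}\asymp (\prod_k J_k)\omega_n^{-2}/n$, since $J_k r_{n,k}\to\infty$. Combining these produces a deterministic bound of order $(\prod_k J_k)/(n\omega_n)$, which tends to zero by hypothesis. For the stochastic piece, I would note that conditional on $N_{\bj}$, the average $\bar{\varepsilon}|_{I_{\bj}}$ has mean zero and variance $\sigma_0^2/N_{\bj}$, so $\E[|\bar{\varepsilon}|_{I_{\bj}}| \mid N_{\bj}]\leq \sigma_0 N_{\bj}^{-1/2}$; coupling with the uniform lower bound $N_{\bj}\gtrsim n/\prod_k J_k$ (again Lemma B.4 of \cite{kang_supp}) yields $\E S(\bu,\bv)\lesssim M(\bu,\bv)(\prod_k J_k/n)^{1/2}$, and Markov's inequality gives $\omega_n^{-1}S(\bu,\bv)/N_{[j(-\bu):j(\bv)]}=O_P\bigl(\omega_n^{1/2}(\prod_k J_k/(n\omega_n))^{3/2}\bigr)=o_P(1)$.

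Uniformity in $(\bu,\bv)\in[c^{-\gamma}\bm{1},c\bm{1}]^2$ is then essentially free: both $M$ and $S$ are sums of non-negative terms over nested index sets and are therefore dominated by their values at $(c\bm{1},c\bm{1})$, while $N_{[j(-\bu):j(\bv)]}$ is bounded below by its value at $(c^{-\gamma}\bm{1},c^{-\gamma}\bm{1})$. The main technical obstacle I anticipate is making the uniform lower bound on $N_{\bj}$ across all bins in $[j(-c\bm{1}):j(c\bm{1})]$ rigorous, since a naive Bernstein-plus-union bound requires $np_{\bj}\gg \log(\prod_k J_k)$; this is exactly what Lemma B.4 of \cite{kang_supp} is designed to provide, so invoking it in the right place should suffice.
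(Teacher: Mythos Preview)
Your decomposition and the bounding of the shrinkage correction mirror the paper's argument: both identify that $N_{\bj}(\E[\theta_{\bj}|\mathbb D_n]-\bar Y|_{I_{\bj}})$ is bounded in absolute value by $b^{-1}(|\zeta_{\bj}|+|\bar Y|_{I_{\bj}}|)$, and both control the resulting ratio using the per-cell lower bound $N_{\bj}\gtrsim n/\prod_k J_k$ on the event $E_n$ of Lemma~B.4. The deterministic piece is handled in the same way, yielding the same order $\prod_k J_k/(n\omega_n)$.

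The difference, and the gap in your proposal, is in the treatment of the error averages. The paper proves directly that $\max_{\bj\in[\bm 1:\bJ]}|\bar\varepsilon|_{I_{\bj}}|=O_{\P_0}(1)$, combining the Marcinkiewicz--Zygmund inequality with the $2(\sum_k\beta_k^{-1}+1)$-th moment assumption on $\varepsilon$ from Assumption~\ref{assumption_data} and a union bound over the $\prod_k J_k$ cells; this is precisely where both the higher-moment condition and the upper bound $\prod_k J_k\ll n\omega_n$ enter. With $\max_{\bj}|\bar Y|_{I_{\bj}}|=O_{\P_0}(1)$ in hand, the ratio in $A'_n$ is bounded by $\omega_n^{-1}(\min_{\bj}N_{\bj})^{-1}$ \emph{uniformly over all} $(\bu,\bv)$ with $\bx_0-\bu\circ\bm r_n,\bx_0+\bv\circ\bm r_n\in[0,1]^d$, not just over the truncated window $[c^{-\gamma}\bm 1,c\bm 1]^2$. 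This full uniformity is what the lemma asserts, and the explicit bound \eqref{formula:boundofAprime} is reused verbatim in the proofs of Lemmas~\ref{lemma:contraction} and~\ref{lemma:large}, where the supremum runs over all $\bu\succeq\bm 0$.

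Your first-moment-plus-Markov argument for $S(\bu,\bv)$ is correct on the truncated window and has the pleasant feature of needing only a second moment on $\varepsilon$, but it does not deliver the uniformity over the full range: when the block reduces to a single cell $\bj_0$ one has $S/M=|\bar\varepsilon|_{I_{\bj_0}}|$, so a uniform-in-$(\bu,\bv)$ bound forces exactly $\max_{\bj}|\bar\varepsilon|_{I_{\bj}}|=O_{\P_0}(1)$, which your argument does not establish. To close the gap you need to add the paper's maximal-error step (or otherwise upgrade the control of $S/M$ to be uniform over all blocks).
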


\begin{proof}
Let $E_n=\{a_1 n/(2\prod_{k=1}^d J_k) \le  N_{\bj} \leq 2a_2 n/(\prod_{k=1}^d J_k)\}$ for some $a_1, a_2 >0$ and $\bar{\varepsilon}|_{I_{\bj}}=\sum_{i\in I_{\bj}} \varepsilon_i/N_{\bj}$. By Lemma B.4 of \cite{kang_supp}, we have, for every $T>0$,
\begin{align}
    \P_0(\max_{\bj} |\bar{\varepsilon}|_{I_{\bj}}|>T) \leq \sum_{\bj} \P_0(|\bar{\varepsilon}|_{I_{\bj}}|>T|E_n)+\P_0(E_n^c).\label{eps_bound}
\end{align}  
By Assumption \ref{assumption_data} and Marcinkiewicz–Zygmund inequality,
\begin{align*}
    \E(|\bar{\varepsilon}|_{I_{\bj}}|^{2(\sum_{k=1}^s \beta_k^{-1} + 1)}|E_n) \lesssim  (a_1 n/(2\prod_{k=1}^d J_k)) ^{-(\sum_{k=1}^s \beta_k^{-1} + 1)}.
\end{align*}
Then \eqref{eps_bound} is bounded by a constant multiple of
$
(\prod_{k=1}^d J_k)^{\sum_{k=1}^s \beta_k^{-1} + 2} n^{-(\sum_{k=1}^s \beta_k^{-1} + 1)} +o(1),
$
which tends to zero because $\prod_{k=1}^d J_k \ll n\omega_n = n^{(\sum_{k=1}^s \beta_k^{-1} + 1)/(\sum_{k=1}^s \beta_k^{-1} + 2)}$.

On the other hand, $\max_{\bj}|\overline{f_0(\bX_i)}|_{I_{\bj}}|\leq f_0(\bm{1})$. Thus $\max_{\bj}|\bar{Y}|_{I_{\bj}}|=O_{\P_0}(1)$.
Because $\E[\theta_{\bj}|\mathbb{D}_n]=( N_{\bj}\bar{Y}|_{I_{\bj}} + \zeta_{\bj}\lambda_{\bj}^{-2})/(N_{\bj}+\lambda_{\bj}^{-2})$, on the event $E_n$, 
\begin{align}
|A'_n(\bm{u},\bm{v})|
&=\omega_n^{-1}\abs{\frac{\sum_{\bj\in [j(-\bm{u}):j(\bm{v})]} \lambda^{-2}_{\bj} N_{\bj} (N_{\bj}+\lambda_{\bj}^{-2} )^{-1}(\zeta_{\bj}-\bar{Y}|_{I_{\bj}}) }{\sum_{\bj\in [j(-\bm{u}):j(\bm{v})]} N_{\bj}} }\nonumber \\ 
&\lesssim \omega_n^{-1} (\max_{\bj}\abs{\bar{Y}|_{I_{\bj}}}+\zeta_{\bj}) (\min_{\bj}N_{\bj})^{-1},\label{formula:boundofAprime}
\end{align}
which is of the order of $ (n\omega_n)^{-1}\prod_{k=1}^d J_k$ in $\P_0$-probability. 
As $\prod_{k=1}^d J_k \ll n\omega_n$ and $\P_0(E_n)\to 1$, we can conclude $A'(\bu,\bv)\to_{\P_0} 0$ uniformly for any $\bu\succeq \bm{0}$ and $\bv\succeq \bm{0}$ provided that $\bx_0 - \bu\circ\bm{r}_n$ and $\bx_0 + \bv\circ\bm{r}_n$ in $[0,1]^d$.
  
\end{proof}

To establish the weak convergence of $B_n$ in $\LL_{\infty}([\bm{0},c\bm{1}]\times [\bm{0},c\bm{1}])$, write 
\begin{equation}\label{equation:bndecomposition}
    B_n(\bm{u},\bm{v})=\omega_n^{-1}\big(\bar{\varepsilon}|_{I_{[j(-\bm{u}):j(\bm{v})]}} + \overline{f_0(\bX)}|_{I_{[j(-\bm{u}):j(\bm{v})]}}-f_0(\bx_0)\big).
\end{equation}

\begin{lemma}\label{lemma:h1nconv}
Let 
$Z_{ni}(\bm{u},\bm{v})= \omega_n \varepsilon_i\Ind_{ \{ \bX_i\in I_{[j(-\bm{u}):j(\bm{v})]} \} }$ and $\HH_{1,n}(\bm{u},\bm{v})=\sum_{i=1}^n Z_{ni}(\bm{u},\bm{v})$. 
Under the conditions of Theorem \ref{thm: posterior_distribution}, 
$
    \HH_{1,n}(\bm{u},\bm{v}) \rightsquigarrow \sigma_0 H_1(\bm{u},\bm{v})\text{ in } \LL_{\infty}([\bm{0},c\bm{1}]\times [\bm{0},c\bm{1}]).
$
\end{lemma}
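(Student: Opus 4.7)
The plan is to combine convergence of finite-dimensional distributions with asymptotic equicontinuity in $\LL_\infty([\bm{0},c\bm{1}]\times[\bm{0},c\bm{1}])$.

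First, for fixed $(\bm{u},\bm{v})$, $\HH_{1,n}(\bm{u},\bm{v})$ is a sum of the i.i.d.\ mean-zero variables $Z_{ni}(\bm{u},\bm{v})$, with variance $n\omega_n^2\sigma_0^2\P(\bX\in I_{[j(-\bm{u}):j(\bm{v})]})$. By the definition of $j(\cdot)$, the rectangle $I_{[j(-\bm{u}):j(\bm{v})]}$ has asymptotic side length $r_{n,k}(u_k+v_k)$ in coordinates $k\le s$, while for $k>s$ it intersects a subinterval of $[0,1]$ whose length is controlled by $u_k$ and $v_k$. Assumption \ref{assumption_data} and the assumed continuity of $g$ near the relevant slice then yield
\begin{equation*}
\P(\bX\in I_{[j(-\bm{u}):j(\bm{v})]})=(1+o(1))\prod_{k=1}^s r_{n,k}(u_k+v_k)\cdot D_s(\bm{u},\bm{v}).
\end{equation*}
Since $\prod_{k=1}^s r_{n,k}=\omega_n^{\sum_k\beta_k^{-1}}$ and $\omega_n=n^{-1/(2+\sum_k\beta_k^{-1})}$, the variance collapses to $\sigma_0^2\prod_{k=1}^s(u_k+v_k)D_s(\bm{u},\bm{v})$, matching the kernel \eqref{H covariance}; covariances at two index points are computed identically via the intersection of the two rectangles. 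The Lindeberg condition is immediate: $|Z_{ni}|\le\omega_n|\varepsilon_i|$ with $\omega_n\downarrow 0$ and $\E\varepsilon^2<\infty$. The Lindeberg-Feller CLT and the Cram\'er-Wold device then deliver convergence of all finite-dimensional distributions to those of $\sigma_0 H_1$.

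For tightness I mirror the blueprint of Lemma \ref{lemma:Anweakconv}. Take the same partition $\mathcal{P}(\delta)$ from \eqref{partition}, and for a cell $T_p$ with reference corner $(\bm{s}_{\bm{t}-\bm{1}},\bm{s}_{\bm{r}-\bm{1}})$ write $\HH_{1,n}(\bm{u},\bm{v})-\HH_{1,n}(\bm{s}_{\bm{t}-\bm{1}},\bm{s}_{\bm{r}-\bm{1}})$ as a signed partial sum of block statistics $T_{\bj}:=\sum_{i:\bX_i\in I_{\bj}}\varepsilon_i$ indexed by $\bj\in\cJ(\bm{s}_{\bm{t}},\bm{s}_{\bm{r}})\setminus\cJ(\bm{s}_{\bm{t}-\bm{1}},\bm{s}_{\bm{r}-\bm{1}})$. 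The $T_{\bj}$ are mean zero conditional on $(\bX_i)$ and, in each of the $2d$ lattice coordinates, partial sums of them form a martingale, so Lemma B.6 of \cite{kang_supp} with $p=4d+2$ bounds the cell-wise maximum oscillation by a constant multiple of
\begin{equation*}
\epsilon^{-(4d+2)}\omega_n^{4d+2}\,\E\Big|\sum_{\bj\in D_p}T_{\bj}\Big|^{4d+2},\qquad D_p=\cJ(\bm{s}_{\bm{t}},\bm{s}_{\bm{r}})\setminus\cJ(\bm{s}_{\bm{t}-\bm{1}},\bm{s}_{\bm{r}-\bm{1}}).
\end{equation*}
A Marcinkiewicz-Zygmund estimate applied to the i.i.d.\ variables $\varepsilon_i\Ind\{\bX_i\in\bigcup_{\bj\in D_p}I_{\bj}\}$, combined with the cardinality bound $\#D_p\lesssim\delta\prod_k r_{n,k}J_k$ (Lemma B.7 of \cite{kang_supp}) and a per-block variance of order $n/\prod_k J_k$, reduces the displayed quantity to a constant multiple of $\epsilon^{-(4d+2)}\delta^{2d+1}$. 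Summing over the $\lesssim\delta^{-2d}$ cells gives a total bound below $\eta$ as soon as $\delta$ is a small multiple of $\eta\epsilon^{4d+2}$, verifying the asymptotic equicontinuity and hence tightness.

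The main obstacle is this tightness step: the process is multi-indexed by $2d$ coordinates and the natural modulus depends on the symmetric difference of rectangles whose volume scales anisotropically via $r_{n,k}$. The martingale maximal inequality (Lemma B.6 of \cite{kang_supp}) carries the argument, and the moment assumption on $\varepsilon$ in Assumption \ref{assumption_data} is exactly what is needed to balance $\omega_n^{4d+2}$ against $n^{2d+1}$ and produce a power of $\delta$ larger than the $\delta^{-2d}$ cost of summing over the partition. Aside from this, the argument is structurally simpler than that of Lemma \ref{lemma:Anweakconv} since only the unconditional i.i.d.\ errors appear, rather than the posterior Gaussian coefficients.
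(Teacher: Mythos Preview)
Your finite-dimensional convergence argument is fine and matches the paper's. The gap is in the tightness step. Mirroring Lemma~\ref{lemma:Anweakconv} via the multi-index martingale maximal inequality with exponent $p=4d+2$ forces you to control $\E\big|\sum_{\bj\in D_p}T_{\bj}\big|^{4d+2}$, and your Marcinkiewicz--Zygmund step needs $\E|\varepsilon|^{4d+2}<\infty$ to turn this into $N_{D_p}^{2d+1}$ times a constant. But Assumption~\ref{assumption_data} only supplies a finite $2(\sum_{k=1}^s\beta_k^{-1}+1)$-th moment, and since $\beta_k\ge 1$ this is at most $2(d+1)=2d+2<4d+2$ for every $d\ge 1$. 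Your closing remark that ``the moment assumption on $\varepsilon$ in Assumption~\ref{assumption_data} is exactly what is needed'' is therefore incorrect: the balancing $\omega_n^{4d+2}(n\prod_k r_{n,k})^{2d+1}=1$ is a rate identity that holds regardless of moments, and the moment requirement enters only through Marcinkiewicz--Zygmund, where it fails. The reason the exponent $4d+2$ worked in Lemma~\ref{lemma:Anweakconv} is that there the increments $N_{\bj}(\theta_{\bj}-\E[\theta_{\bj}|\mathbb{D}_n])$ are conditionally Gaussian, so all moments are available for free.

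The paper avoids this by not reusing the martingale argument at all. Since $\HH_{1,n}=\sum_{i=1}^n Z_{ni}$ is a sum of i.i.d.\ processes, it applies the bracketing CLT for triangular arrays (Theorem~2.11.9 of \cite{van1996weak}, recorded as Lemma~B.3 in the supplement). Tightness then follows from three second-moment conditions: a Lindeberg-type envelope condition, an $L_2$ modulus bound $\sum_i\E(Z_{ni}(\bu,\bv)-Z_{ni}(\bu',\bv'))^2\lesssim\|\bu-\bu'\|+\|\bv-\bv'\|$ obtained from the symmetric-difference volume estimate, and a bracketing entropy bound $\mathcal{N}_{[\,]}(\epsilon,\mathcal{F},\|\cdot\|_n)\lesssim\epsilon^{-4d}$ coming from the same partition $\mathcal{P}(\delta)$ with $\delta\asymp\epsilon^2$. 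None of these requires more than $\E\varepsilon^2<\infty$, so the argument goes through under the stated assumptions.
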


\begin{lemma}\label{lemma:b2nconv}
Under the conditions of Theorem \ref{thm: posterior_distribution}, for any $c>0$, uniformly in  $(\bm{u},\bm{v})\in [\bm{0},c\bm{1}]^2$,  we have
    \begin{align*} 
        \omega_n^{-1}\big( \overline{f_0(\bX)}|_{I_{[j(-\bm{u}):j(\bm{v})]}}-f_0(\bx_0)\big) 
    \to_{\P_0}\sum_{\bm{l}\in L^{\ast}} \frac{\partial^{\bm{l}}f_0(\bx_0)}{\bm{l}+\bm{1}!} 
    \prod_{k=1}^s \frac{v_k^{l_k+1}-(-u_k)^{l_k+1}}{u_k+v_k}.
    \end{align*}
\end{lemma}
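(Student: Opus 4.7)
The plan is to decompose
$\overline{f_0(\bX)}|_{I_{[j(-\bu):j(\bv)]}} - f_0(\bx_0) = T_n(\bu,\bv)/M_n(\bu,\bv)$, where
$T_n(\bu,\bv) := n^{-1}\sum_{i=1}^n (f_0(\bX_i)-f_0(\bx_0))\Ind\{\bX_i\in I_{[j(-\bu):j(\bv)]}\}$ and $M_n(\bu,\bv):=N_{[j(-\bu):j(\bv)]}/n$, and then to show separately that (i) $\omega_n^{-1}\,\E T_n / \E M_n$ converges to the stated limit uniformly on $[\bm{0},c\bm{1}]^2$, and (ii) the fluctuation $T_n/M_n - \E T_n/\E M_n$ is $o_{\P_0}(\omega_n)$ uniformly on the same set.

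For (i), I would use that since $J_k \gg r_{n,k}^{-1}$, the rectangle $I_{[j(-\bu):j(\bv)]}$ agrees with
$\prod_{k=1}^s[x_{0,k}-u_k r_{n,k}, x_{0,k}+v_k r_{n,k}]\times\prod_{k=s+1}^d([x_{0,k}-u_k, x_{0,k}+v_k]\cap[0,1])$ up to $O(J_k^{-1})=o(r_{n,k})$ in each coordinate, a negligible contribution. Assumption \ref{assumption_approximation} combined with Lemma~1 of \cite{han2020} (which forces $\partial^{\bm{l}}f_0(\bx_0)=0$ for $\bm{l}\in L_0$) then gives $f_0(\bx) - f_0(\bx_0) = \sum_{\bm{l}\in L^*} \partial^{\bm{l}}f_0(\bx_0)(\bx-\bx_0)^{\bm{l}}/\bm{l}! + o(\omega_n)$ uniformly on this rectangle. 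Using the continuity of $g$ near the slice $\{x_k=x_{0,k},\,k\le s\}$ stipulated by Assumption \ref{assumption_data} and the fact that the windows have length $O(r_{n,k})\to 0$ for $k\le s$, one may replace $g(\bx)$ by $g(x_{0,1},\ldots,x_{0,s},x_{s+1},\ldots,x_d)$ in those coordinates to obtain
\begin{align*}
\E M_n &= \Big(\prod_{k=1}^s(u_k+v_k)r_{n,k}\Big)D_s(\bu,\bv)(1+o(1)),\\
\E T_n &= \omega_n\Big(\prod_{k=1}^s(u_k+v_k)r_{n,k}\Big)D_s(\bu,\bv)\sum_{\bm{l}\in L^*}\frac{\partial^{\bm{l}}f_0(\bx_0)}{(\bm{l}+\bm{1})!}\prod_{k=1}^s\frac{v_k^{l_k+1}-(-u_k)^{l_k+1}}{u_k+v_k}(1+o(1)),
\end{align*}
where $(\bm{l}+\bm{1})!$ arises from combining $\bm{l}!$ with the $\prod_k(l_k+1)^{-1}$ produced by integrating $(x_k-x_{0,k})^{l_k}$, and $\prod_{k=1}^s r_{n,k}^{l_k}=\omega_n^{\sum l_k/\beta_k}=\omega_n$ exactly for $\bm{l}\in L^*$. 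Dividing and multiplying by $\omega_n^{-1}$ yields the target.

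For (ii), on the rectangle $|f_0(\bx)-f_0(\bx_0)|=O(\omega_n)$, so $\mathrm{Var}(T_n)\lesssim \omega_n^2 \E M_n/n$ and $\mathrm{Var}(M_n)\le \E M_n/n$; since $\E M_n\asymp \prod_{k=1}^s r_{n,k}$ and $n\prod_{k=1}^s r_{n,k} = n\omega_n^{\sum_k 1/\beta_k} = n^{2/(2+\sum_k 1/\beta_k)} \to\infty$, Chebyshev/Bernstein gives $T_n/M_n = (\E T_n/\E M_n)(1+o_{\P_0}(1))$ for each fixed $(\bu,\bv)$, i.e.\ the fluctuation contributes $o_{\P_0}(\omega_n)$. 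To upgrade to uniformity, note that as $(\bu,\bv)$ ranges over $[\bm{0},c\bm{1}]^2$, the indices $(j(-\bu),j(\bv))$ take only $O(\prod_{k=1}^d(cr_{n,k}J_k+1)^2)$ distinct values, which is polynomial in $n$ under the stated conditions on $\bJ$; a union bound over these discretely many rectangles combined with Bernstein's inequality delivers the required uniform $o_{\P_0}(\omega_n)$ control.

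The main obstacle will be ensuring that all three error sources---the discretization error from $j(\cdot)=\lceil\cdot\rceil$, the Taylor residual from Assumption \ref{assumption_approximation}, and the empirical deviation of $(T_n,M_n)$ from their means---remain $o(\omega_n)$ \emph{uniformly} in $(\bu,\bv)\in[\bm{0},c\bm{1}]^2$ after forming the ratio, and in particular that the factorization leading to $D_s(\bu,\bv)$ is preserved in the anisotropic regime where the ``smooth'' directions $k\le s$ shrink at rates $r_{n,k}\to 0$ while the ``flat'' directions $k>s$ remain $O(1)$ and may interact with the boundary of $[0,1]^d$.
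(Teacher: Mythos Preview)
Your approach is essentially the one the paper takes: reduce via the Taylor expansion of Assumption~\ref{assumption_approximation} (using that $\partial^{\bm l}f_0(\bx_0)=0$ for $\bm l\in L_0$) to the polynomial moments $\sum_{i:\bX_i\in I}(\bX_i-\bx_0)^{\bm l}$, compute their expectations by replacing $g(\bx)$ with $g(\bx_s)$ via continuity and integrating coordinatewise, and control fluctuations by a variance bound. The paper packages this slightly differently---it treats each $\bm l\in L^*$ separately, showing $\omega_n\sum_{i:\bX_i\in I}(\bX_i-\bx_0)^{\bm l}\to_{\P_0}\prod_{k=1}^s(l_k+1)^{-1}[v_k^{l_k+1}-(-u_k)^{l_k+1}]D_s(\bu,\bv)$ by a direct mean-plus-variance calculation, and then divides by $\omega_n^2 N_{[j(-\bu):j(\bv)]}$ using Lemma~B.2---rather than forming $T_n/M_n$ and comparing to $\E T_n/\E M_n$ as you do; the two routes are algebraically equivalent once one combines $\bm l!$ with the $\prod_k(l_k+1)^{-1}$ into $(\bm l+\bm 1)!$, and $D_s(\bu,\bv)$ cancels in the ratio as you note. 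The one substantive difference is that the paper's written proof is a pointwise mean-plus-variance argument and does not spell out the passage to uniformity over $[\bm 0,c\bm 1]^2$, whereas you supply it explicitly via Bernstein plus a union bound over the polynomially-many distinct lattice rectangles $I_{[j(-\bu):j(\bv)]}$; this is a correct and useful addition that the paper leaves implicit.
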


\begin{lemma}\label{lemma:contraction}
  Under the conditions of Theorem \ref{thm: posterior_distribution},
  for any $M_n \uparrow \infty$, $\Pi(\abs{f_*(\bx_0)-f_0(\bx_0)}> M_n\omega_n|\mathbb{D}_n)\to 0$ in $\P_0$-probability.
\end{lemma}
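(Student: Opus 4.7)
The plan is to establish the tightness of $\mathcal{L}(W_n^*|\mathbb{D}_n)$ on $\RR$ in $\P_0$-probability, where $W_n^* = \omega_n^{-1}(f_*(\bx_0)-f_0(\bx_0))$; tightness combined with $M_n\uparrow\infty$ immediately gives the stated convergence. The strategy is to couple Lemma \ref{lemma:weakconv} on the compact truncation $T_{c_0}=[c_0^{-\gamma}\bm{1},c_0\bm{1}]^2$ with separate small-block (variance) and large-block (bias) deviation estimates outside $T_{c_0}$, paralleling the small/large deviation analysis in \cite{han2020} for the frequentist block estimator.

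Fix $\epsilon>0$. Since the deterministic drift in $U(\bu,\bv)$ dominates its Gaussian component both as $\|\bu\|,\|\bv\|\to\infty$ and as some $u_k,v_k\to 0$, the random variable $W_c$ converges almost surely as $c\uparrow\infty$ to a finite limit, so I may choose $c_0$ and $M_0$ with $\P(|W_{c_0}|>M_0)<\epsilon$. Lemma \ref{lemma:weakconv} then gives $\Pi(|W_{n,c_0}^*|>M_0|\mathbb{D}_n)<2\epsilon$ with $\P_0$-probability tending to one.

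If $W_n^*>M_0$ while $W_{n,c_0}^*\le M_0$, the outer supremum in $W_n^*$ must be attained at some $\bu^*$ with either $u_k^*>c_0$ (large-block) or $u_k^*<c_0^{-\gamma}$ (small-block) for some $k\le s$, and symmetrically for the inner infimum over $\bv$. In the large-block regime I take $\bv$ with small components in the inner infimum, so that $G_n(\bu^*,\bv)\approx B_n(\bu^*,\bv)$, whose $\P_0$-expectation is negative of order $c_0^{\beta_k}$ by the monotonicity of $f_0$ and Lemma \ref{lemma:b2nconv}; the Gaussian posterior fluctuation $A_n$ is controlled uniformly on dyadic shells via Lemma B.6 of \cite{kang_supp}, while $A_n'$ is bounded by \eqref{formula:boundofAprime}, forcing $\inf_\bv G_n(\bu^*,\bv)<M_0$ with high probability once $c_0$ is large. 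In the small-block regime the posterior variance of $A_n$ blows up as $\sigma_0^2/N_\bj$ on each thin block, but $B_n$ is $o(1)$ there; a chaining/peeling argument across dyadic shells inside $\{u_k<c_0^{-\gamma}\}$ bounds the posterior supremum of $G_n$ by $M_0$ with probability at least $1-\epsilon$, provided $\gamma$ is chosen small enough relative to $s$. A union bound combining these estimates with their counterparts for $\bv$ yields $\Pi(|W_n^*|>M_0|\mathbb{D}_n)=O(\epsilon)$, and hence tightness.

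The main technical obstacle is the small-block chaining: with so few observations on the vanishing blocks the posterior Gaussian noise $\HH_{2,n}$ grows rapidly, so the peeling must be sharp enough to make the summed Gaussian tail across the dyadic shells small. This replicates, for the posterior noise layered on top of the empirical noise $\HH_{1,n}$, the small-deviation argument carried out in \cite{han2020} for the block estimator.
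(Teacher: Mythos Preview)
Your overall strategy---prove tightness of $\mathcal{L}(W_n^*|\mathbb{D}_n)$ by combining the truncated weak limit from Lemma~\ref{lemma:weakconv} with small- and large-block deviation estimates on the optimizer---is essentially the content of Lemma~\ref{lemma:large} (localization of $(\bu^*,\bv^*)$), and in the paper's logical order that lemma \emph{uses} Lemma~\ref{lemma:contraction} as an input. So as written your plan risks circularity, and you correctly flag the small-block chaining as the hard step; that step is nontrivial and is not something one can wave through.

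The paper avoids all of this with a much more elementary device: the max--min inequality. Since
\[
f_*(\bx_0)-f_0(\bx_0)=\omega_n\max_{\bu\succeq\bm0}\min_{\bv\succeq\bm0}\{A_n+A_n'+B_n\}\le \omega_n\max_{\bu\succeq\bm0}\{A_n(\bu,\bm1)+A_n'(\bu,\bm1)+B_n(\bu,\bm1)\},
\]
the inner infimum is discarded by freezing $\bv=\bm1$, leaving a \emph{single} supremum over $\bu$. For the posterior-noise term $A_n$, a dyadic peeling over shells $\{2^{h_k}\le u_k\le 2^{h_k+1}\}$ together with the multi-index martingale maximal inequality (Lemma~B.6) gives $\E[\sup_{\bu}|A_n(\bu,\bm1)|\mid\mathbb{D}_n]=O_{\P_0}(1)$. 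The drift term $A_n'$ is already uniformly $o_{\P_0}(1)$ by \eqref{formula:boundofAprime}. For $B_n$, the empirical-error part $\bar\varepsilon|_{I_{[\cdot]}}$ is handled by the same peeling, and the bias part is bounded above using only the monotonicity of $f_0$: $\overline{f_0(\bX)}|_{I_{[j(-\bu):j(\bm1)]}}-f_0(\bx_0)\le f_0(\bx_0+\bm r_n+\bJ^{-1})-f_0(\bx_0)=O(\omega_n)$. The lower tail is symmetric, freezing $\bu=\bm1$ and taking $\min_\bv$. No truncation, no localization of the optimizer, and no small-block analysis are needed.

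In short: your route could perhaps be pushed through, but it reproves the harder Lemma~\ref{lemma:large} in disguise. The paper's trick of fixing $\bv=\bm1$ (resp.\ $\bu=\bm1$) collapses the problem to a one-parameter supremum whose posterior expectation is finite by a clean peeling-plus-Doob argument; that is the idea you are missing.
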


With the aid of Lemma \ref{lemma:contraction}, the second condition of Proposition B.1 of \cite{kang_supp} is verified by Lemma \ref{lemma:large} in the following.
\begin{lemma}
	\label{lemma:large}
	Let $\bm{u}^*$ and $\bm{v}^*$ be any pair indexes such that  
\begin{align}
   f_*(\bx_0)=\max_{\bm{u}\succeq \bm{0}}\min_{\bm{v}\succeq \bm{0}}    \frac{\sum_{[j(-\bm{u}):j(\bm{v})]}N_{\bj}\theta_{\bj}}{\sum_{[j(-\bm{u}):j(\bm{v})]}N_{\bj}}=\frac{\sum_{[j(-\bm{u}^*):j(\bm{v}^*)]}N_{\bj}\theta_{\bj}}{\sum_{[j(-\bm{u}^*):j(\bm{v}^*)]}N_{\bj}}.
\end{align}
  Let $\omega_n=n^{-{1}/{(2+\sum_{k=1}^{s}\beta_k^{-1})}}$ and let  $\bm{r}_n=(\omega_n^{1/\beta_1},\ldots,\omega_s^{1/\beta_s},1,\ldots,1)\trans$. Suppose that $\bJ$ satisfies $J_k\gg r_{n,k}^{-1}$, for each $k=1,\ldots,d$, and  $\prod_{k=1}^d J_k \ll n\omega_n$. 
  Under Assumptions  \ref{assumption_approximation} and \ref{assumption_data}, there exists  $\gamma>0$ such that
    $$\lim_{c\to \infty} \limsup_{n\to\infty}\Pi(c^{-\gamma}\leq \min_{1\leq k \leq d}\{v_k^{\ast}\} \leq \max_{1\leq k \leq d}\{v_k^{\ast}\}\leq c |\mathbb{D}_n)=1,$$ in $\P_0$-probability.
\end{lemma}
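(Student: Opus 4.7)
The plan is to establish the upper bound $\max_k v_k^* \le c$ and the lower bound $\min_k v_k^* \ge c^{-\gamma}$ separately, using the local contraction rate of Lemma~\ref{lemma:contraction} together with the bias-variance decomposition $V(\bm{u},\bm{v}) - f_0(\bm{x}_0) = \omega_n\bigl(A_n + A_n' + B_n\bigr)$ from the proof of Lemma~\ref{lemma:weakconv}, combined with peeling over dyadic shells in the scaled index space. Since the bias term $B_n(\bm{u},\bm{v})$ depends on both endpoints, the control of $\bm{v}^*$ cannot be decoupled from that of $\bm{u}^*$, so the peeling will be carried out jointly in $(\bm{u}^*,\bm{v}^*)$; the resulting bound on $\bm{u}^*$ is obtained as a byproduct.

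\emph{Upper bound.} On the event $\{\max_k(u_k^* \vee v_k^*) > c\}$, pick the coordinate $k$ achieving the maximum. By Lemma~\ref{lemma:b2nconv} (and using the monotonicity of $f_0$ together with the structure of $L^*$, so that the bias splits into non-negative univariate contributions $\tfrac{\partial_k^{\beta_k}f_0(\bm{x}_0)}{(\beta_k+1)!}\cdot\tfrac{v_k^{\beta_k+1}-(-u_k)^{\beta_k+1}}{u_k+v_k}$ for $k\le s$), one has $B_n(\bm{u}^*,\bm{v}^*)\ge \mathrm{const}\cdot (u_k^*\vee v_k^*)^{\beta_k}$. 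The conditional posterior variance of $A_n(\bm{u}^*,\bm{v}^*;\bm{\theta})$ is of order $1/\prod_j(u_j^*+v_j^*) \le 1/(u_k^*\vee v_k^*)$, and Lemma~\ref{lemma:Anprimeconv} absorbs $A_n'$. A peeling argument over dyadic shells $c\cdot 2^l \le u_k\vee v_k < c\cdot 2^{l+1}$, combined with a Gaussian tail bound on $A_n$, shows that uniformly over $(\bm{u},\bm{v})$ with $u_k\vee v_k > c$, the quantity $\omega_n^{-1}|V(\bm{u},\bm{v})-f_0(\bm{x}_0)|$ is at least $\tfrac12 c^{\beta_k}$ with $\Pi(\,\cdot\,|\mathbb{D}_n)$-probability tending to one (in $\P_0$-probability). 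Since Lemma~\ref{lemma:contraction} yields $\omega_n^{-1}|f_*(\bm{x}_0)-f_0(\bm{x}_0)|\le M_n$ for a slowly growing $M_n$ (e.g.\ $M_n = \log\log n$), this forces $u_k^*\vee v_k^* \le c$ for every sufficiently large $c$.

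\emph{Lower bound.} If $v_k^* < c^{-\gamma}$, then using the upper bound just established to control the remaining $u_j^*,v_j^*$, the conditional posterior variance of $A_n(\bm{u}^*,\bm{v}^*;\bm{\theta})$ exceeds $\mathrm{const}\cdot c^{\gamma}$. The minimum over the lattice cells with $v_k < c^{-\gamma}$ of this centered Gaussian process is therefore typically of order $-c^{\gamma/2}\sqrt{\log(\cdot)} \ll -M_n$. Were $\bm{v}^*$ to lie in this region, the value $V(\bm{u}^*,\bm{v}^*)$ would fall well below $f_0(\bm{x}_0)-M_n\omega_n$ (the bias is negligible for small $v_k$), contradicting Lemma~\ref{lemma:contraction}. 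A dyadic peeling over small-$v_k$ shells with a union bound using Gaussian maxima concentration yields the claim, provided $\gamma > 0$ is chosen so that the tail exponents dominate the lattice entropy.

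The principal obstacle is the lower tail: one must control simultaneously a posterior variance that diverges as $v_k^*\downarrow 0$ and a candidate set of lattice cells whose cardinality grows with $n$, uniformly in the random $\bm{u}^*$. This requires a delicate balance between the Gaussian exponent and the entropy of the lattice, in the spirit of (but subtler than) the tightness argument of Lemma~\ref{lemma:Anweakconv}; the value of $\gamma$ is determined by this balance.
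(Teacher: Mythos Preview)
Your upper-bound sketch is broadly aligned with the paper's: the bias $B_{2n}(\bm{u}^*,\bm{v}^*)$ grows like $\max_k v_k^{*\beta_k}$ (via monotonicity of $f_0$, replacing $\bm{u}^*$ by $\bm{1}$), and this together with Lemma~\ref{lemma:contraction} forces $\max_k v_k^*\le c$. One caveat: your stated inequality $B_n(\bm{u}^*,\bm{v}^*)\ge\text{const}\cdot(u_k^*\vee v_k^*)^{\beta_k}$ is not correct when it is $u_k^*$ that is large, since the bias contribution from a large $u_k^*$ is \emph{negative}; the lemma only concerns $\bm{v}^*$, so you should drop the simultaneous treatment of $\bm{u}^*$ here.

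The lower-bound argument, however, has a genuine gap. Your claim that ``the conditional posterior variance of $A_n(\bm{u}^*,\bm{v}^*;\bm{\theta})$ exceeds $\text{const}\cdot c^{\gamma}$ when $v_k^*<c^{-\gamma}$'' is false without further information on $u_k^*$: the variance is of order $1/\prod_j(u_j^*+v_j^*)$, so smallness of $v_k^*$ alone buys you nothing if $u_k^*$ is of order $1$ (let alone up to $c$, which is all the upper bound gives). Consequently the ``typical minimum $\sim -c^{\gamma/2}\sqrt{\log}$'' heuristic collapses, and no contradiction with Lemma~\ref{lemma:contraction} follows. The paper takes a substantially different route for this half: rather than arguing $f_*(\bx_0)-f_0(\bx_0)$ is very \emph{negative} on $\{v_d^*<c^{-\gamma}\}$, it shows it must be very \emph{positive}. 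The mechanism is to compete with a $\bm{u}$ in a thin slab $\{u_d\le c^{-b},\ u_k\le c^a\ (k\ne d)\}$; on this slab the denominator $\prod_k(u_k+v_k^*)$ is small (order $c^{as-b}$), so the normalised Gaussian fluctuation $(A_n+B_{1n})(\bm{u},\bm{v}^*)$ is large. The key non-trivial input is Lemma~C.8 of \cite{han2020} (a strict positivity result for $\min_{\bm{v}}\max_{\bm{u}}(H_1+H_2)$ in the limit), combined with an increment bound (Lemma~C.6) to pass from $v_d^*<c^{-\gamma}$ to $v_d=0$. The negative bias from the $\bm{u}$-direction is only $O(c^{a\max_k\beta_k})$, and the parameters are then tuned ($a=3$, $b\ge 2(1+a\max_k\beta_k)+as$, $\gamma=b+1$) so that $c^{(b-as)/2}\gg c^{a\max_k\beta_k}$, forcing $f_*(\bx_0)-f_0(\bx_0)\gtrsim \omega_n c^{(b-as)/2-a\max_k\beta_k}\to\infty$, which contradicts Lemma~\ref{lemma:contraction}. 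Your variance/peeling idea does not capture this; the obstacle you flag (``delicate balance between Gaussian exponent and lattice entropy'') is real, but it is resolved through the slab construction and the limiting positivity lemma, not through a Gaussian-minimum bound.
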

The proofs of Lemmas~\ref{lemma:h1nconv}--\ref{lemma:large} are provided in \cite{kang_supp}.


The proof of Theorem~\ref{thm: posterior_distribution} can now be completed. 
Using arguments similar to Proposition~7 of \cite{han2020}, it can be verified that $\P(W_c \neq W) \to 0$ as $c\to\infty$. Hence the proof follows by an application of Lemma B.1 of \cite{kang_supp}. 

\subsection{Proof of Proposition~\ref{lemma:separation}}

This can be shown by the self-similarity property of Gaussian processes $H_1$ and $H_2$: for $\bm{t}\in\RR_{>0}^d$ such that $t_{s+1}=\cdots=t_d=1$, we have that $H_i(\bm{t}\circ \bm{u}, \bm{t}\circ \bm{v})=_d (\prod_{j=1}^s t_j )^{1/2}H_i(\bm{u},\bm{v})$, $i=1,2$. By the choice of $\bm{t}$, multiplying a vector coordinatewise by $\bm{t}$ does not change the last $d-s$ coordinates and thus $D_s(\bm{t}\circ \bm{u},\bm{t}\circ \bm{v})=D_s(\bm{u}, \bm{v})$. Then, since a scaling of the domain does not alter suprema and infima, the expression in the limiting distribution is equal to 
\begin{eqnarray*}
    \lefteqn{ \sup_{\bm{u}\succeq \bm{0}}\inf_{\bm{v}\succeq\bm{0}}\Big\{ 
    \frac{\sigma_0 H_1(\bm{t}\circ \bm{u},\bm{t}\circ \bm{v}) + \sigma_0 H_2(\bm{t}\circ \bm{u},\bm{t}\circ \bm{v})}{\prod_{k=1}^s(t_k u_k+t_k v_k)D_s(\bm{u},\bm{v})} }  \\
    && \qquad +\sum_{k=1}^s \big[\frac{\partial^{\beta_k}_k f_0(\bx_0)}{(\beta_k+1)!} 
     \cdot\frac{(t_k v_k)^{\beta_k+1}-(-t_k u_k)^{\beta_k+1}}{t_k u_k+t_k v_k}\big] \Big\}\\
     && =_d \sup_{\bm{u}\succeq \bm{0}}\inf_{\bm{v}\succeq\bm{0}}\Big\{ \big(\sigma_0^{-2} \prod_{j=1}^s t_j \big)^{-1/2}
    \frac{H_1(\bm{u},\bm{v}) + H_2(\bm{u},\bm{v})}{\prod_{k=1}^s(u_k+v_k)D_s(\bm{u},\bm{v})}\\
    && \qquad  +\sum_{k=1}^s \big[\frac{t_k^{\beta_k}\partial^{\beta_k}_k f_0(\bx_0)}{(\beta_k+1)!} 
     \cdot\frac{v_k^{\beta_k+1}-(-u_k)^{\beta_k+1}}{u_k+v_k}\big] \Big\}.
\end{eqnarray*}
By equating $(\sigma_0^{-2}\prod_{j=1}^s t_j )^{-1/2}$ to $t_k^{\beta_k}\partial^{\beta_k}_k f_0(\bx_0)/(\beta_k + 1 )!$ for each $k=1,\dots,s$, we can find the solution $t_k$ to the system of equations, and also the common factor $A_{\bm{\beta}}$ as stated in the proposition.

If $s=d$, then $D_d(\bm{u}, \bm{v})=g(\bx_0)$ and $H_i(\bm{u},\bm{v})=_d \sqrt{g(\bx_0)}\tilde{H}_i(\bm{u},\bm{v})$. For $\bm{t}\in\RR_{>0}^d$, $\tilde{H}_i(\bm{t}\circ \bm{u}, \bm{t}\circ \bm{v})=_d (\prod_{j=1}^d t_j )^{1/2}\tilde{H}_i(\bm{u},\bm{v})$, $i=1,2$. Hence by self-similarity, the last expression reduces to 
\begin{eqnarray*}
    \lefteqn{ \sup_{\bm{u}\succeq \bm{0}}\inf_{\bm{v}\succeq\bm{0}}\Big\{ 
    \frac{\sigma_0}{\sqrt{g(\bx_0)}}\big(\frac{\tilde{H}_1(\bm{t}\circ\bm{u},\bm{t}\circ\bm{v})}{\prod_{k=1}^d(t_k u_k+t_k v_k)}+\frac{ \tilde{H}_2(\bm{t}\circ\bm{u},\bm{t}\circ\bm{v})}{\prod_{k=1}^d(t_k u_k+t_k v_k)}\big) } \\ 
    && \qquad +\sum_{k=1}^d\left[ \frac{\partial^{\beta_k}_k f_0(\bx_0)}{(\beta_k+1)!} 
     \cdot\frac{(t_k v_k)^{\beta_k+1}-(-t_k u_k)^{\beta_k+1}}{t_k u_k+t_k v_k} \right]\Big\}\\
     &&=_d \sup_{\bm{u}\succeq \bm{0}}\inf_{\bm{v}\succeq\bm{0}}\Big\{ 
    \sqrt{\frac{\sigma^2_0}{g(\bx_0)\prod_{j=1}^d t_j}}\big(\frac{\tilde{H}_1(\bm{u},\bm{v})}{\prod_{k=1}^d(u_k+v_k)}+\frac{ \tilde{H}_2(\bm{u}, \bm{v})}{\prod_{k=1}^d(u_k+v_k)}\big)\\
    && \qquad  +\sum_{k=1}^d\big[ \frac{t_k^{\beta_k}\partial^{\beta_k}_k f_0(\bx_0)}{(\beta_k+1)!} 
     \cdot\frac{v_k^{\beta_k+1}-(-u_k)^{\beta_k+1}}{ u_k+v_k} \big]\Big\}.
\end{eqnarray*}
By exploring the equation system for $t_k$ as follows,
\begin{align*}
    \sqrt{\frac{\sigma^2_0}{g(\bx_0)\prod_{j=1}^d t_j}}=\frac{t_k^{\beta_k}\partial^{\beta_k}_k f_0(\bx_0)}{(\beta_k+1)!}, \text{ for }k=1,\dots,d,
\end{align*}
we can find the common factor $\tilde{A}_{\bm{\beta}}$ in a similar way of solving a set of equations.

\begin{proof}[Proof of Proposition~\ref{dual tail}]
	For $0\le z\le 1$, 
	\begin{align*}
	\P(Z_B^{(1)}\leq z)&=\P(1-Z_B^{(1)}\geq 1-z)\\
	&=\P( \P(-\sup_{\bm{u}\succeq \bm{0}}\inf_{\bm{v}\succeq\bm{0}}\tilde{U}(\bm{u},\bm{v})  \leq 0 | \tilde{H}_1 )\geq 1-z  )\\
	&=\P( \P(\inf_{\bm{u}\succeq \bm{0}}\sup_{\bm{v}\succeq\bm{0}} [-\tilde{U}(\bm{u},\bm{v})]  \leq 0 | \tilde{H}_1)\geq 1-z).
	\end{align*}
	Note that $\tilde{H}_i(\bm{u},\bm{v})=_d \tilde{H}_i(\bm{v},\bm{u})$ and $\tilde{H}_i =_d-\tilde{H}_i$ for $i=1,2$. Denote $\tilde{H}_1^* = - \tilde{H}_1$. Then we have
	\begin{align*}
	&\P\big(\inf_{\bm{u}\succeq \bm{0}}\sup_{\bm{v}\succeq\bm{0}}[-\tilde{U}(\bm{u},\bm{v})]  \leq 0 \big| \tilde{H}_1 \big)\\
	&\quad =_d\P\big(\inf_{\bm{u}\succeq \bm{0}}\sup_{\bm{v}\succeq\bm{0}}
	\big\{\frac{\tilde{H}_1^*(\bm{u},\bm{v})}{\prod_{k=1}^d(u_k+v_k)} +\frac{ -\tilde{H}_2(\bm{u},\bm{v})}{\prod_{k=1}^d(u_k+v_k)} \\
	&\qquad\qquad 
	+\sum_{k=1}^d 
	\frac{-v_k^{\beta_k+1}+(-u_k)^{\beta_k+1}}{u_k+v_k} \big\}
	\leq 0 \big| -\tilde{H}_1^*\big)\\
	&\quad =_d \P\big(\inf_{\bm{u}\succeq \bm{0}}\sup_{\bm{v}\succeq\bm{0}} \big\{
	\frac{\tilde{H}_1^*(\bm{u},\bm{v})}{\prod_{k=1}^d(u_k+v_k)} +\frac{ \tilde{H}_2(\bm{u},\bm{v})}{\prod_{k=1}^d(u_k+v_k)} \\
	&\qquad 
	+\sum_{k=1}^d 
	\frac{u_k^{\beta_k+1}-v_k^{\beta_k+1}}{u_k+v_k} \big\}
	\leq 0 \big| \tilde{H}_1^* \big)\\
	&\quad =_d \P\big(\inf_{\bm{u}\succeq \bm{0}}\sup_{\bm{v}\succeq\bm{0}}
	\big\{
	\frac{\tilde{H}_1^*(\bm{v},\bm{u})}{\prod_{k=1}^d(u_k+v_k)} +\frac{ \tilde{H}_2(\bm{v},\bm{u})}{\prod_{k=1}^d(u_k+v_k)} \\
	&\qquad\qquad 
	+\sum_{k=1}^d 
	\frac{u_k^{\beta_k+1}-v_k^{\beta_k+1}}{u_k+v_k} \big\}
	\leq 0 \big| \tilde{H}_1^* \big)\\
	&\quad = \P\big(\inf_{\bm{u}\succeq \bm{0}}\sup_{\bm{v}\succeq\bm{0}}
	\tilde{U}(\bm{v},\bm{u})\leq 0\Big| \tilde{H}_1^* \big).
	\end{align*}
	Hence 
	$\P(Z_B^{(1)}\leq z)=\P(Z_B^{(2)}\geq 1-z)$. 
	The symmetry of the distribution of $Z_B^{(3)}$ holds by similar arguments.
\end{proof}

\bibliographystyle{plain}
\bibliography{mybib}


\newpage
\setcounter{page}{1}
\begin{center}{\bf \Large Supplement to ``Coverage of Credible Intervals in Bayesian Multivariate Isotonic Regression''}

\bigskip
	
Kang Wang and Subhashis Ghosal
\end{center}

\noindent
In this supplement, we provide the remaining proofs of some lemmas in the main body of the paper in Appendix \ref{sec:appendixA} and all the supporting lemmas and their proofs in Appendix \ref{sec:appendixB}. In what follows we use the notations 
and numbered elements (like equations, sections) from the main paper. 

\def\thesection{\Alph{section}}
\setcounter{section}{0} 
\section{Appendix}
\label{sec:appendixA}
\begin{proof}[Proof of Lemma 6.4]
We first verify the finite-dimensional convergence.  For any pair $(\bm{u},\bm{v})$ and $(\bm{u}',\bm{v}')$, by the independence of $\bX$ and $\varepsilon$, the covariance of 
$\HH_{1,n}(\bm{u},\bm{v})$ and $\HH_{1,n}(\bm{u}',\bm{v}')$ is 
$$n \omega_n^2  \E \big(\varepsilon^2 \Ind_{\{\bX\in I_{[j(-\bm{u}\wedge \bm{u}'):j(\bm{v}\wedge \bm{v}')]} \}}\big)= \sigma_0^2 n\omega_n^2 \int_{I_{[j(-\bm{u}\wedge \bm{u}'):j(\bm{v}\wedge \bm{v}')]}} g(\bx) \d\bx.$$
Write $g(\bx)=g(\bx_s) +  (g(\bx)  - g(\bx_s))$ where  $\bx_{s}=(x_{0,1},\ldots,x_{0,s},x_{s+1},\ldots,x_d)$ and use the continuity of $g$ around $\bx_0$ to reduce the expression to 
\begin{equation*}
    \sigma_0^2 n\omega_n^2  \prod_{k=1}^s \left( (u_k\wedge u_k' + v_k \wedge v_k') r_{n,k} + O(j_k^{-1})\right) D_s^J(\bm{u}\wedge \bm{u}',\bm{v}\wedge \bm{v}')(1+o(1)).
\end{equation*}
From the proof of Lemma \ref{lemma:njconv},
   $ D_s^J(\bu, \bv) \to D_s(\bu,\bv)$.
Since $\bu,\bv,\bu',\bv'$ are bounded and $J_k\gg r_{n,k}^{-1}$, it follows that the limit of the expression in the last display converges to $ \sigma^2_0 \prod_{k=1}^s \left( u_k\wedge u_k'+ v_k\wedge v'_k \right) D_s(\bm{u}\wedge \bm{u}',\bm{v}\wedge \bm{v}')$.

To establish the asymptotic tightness of $\HH_1$ in $\LL_{\infty}([\bm{0},c\bm{1}]\times [\bm{0},c\bm{1}])$, we apply Lemma~\ref{lemma:clt_partial_sum_process} with $\mathcal{F}=[\bm{0},c\bm{1}]^2$ and $\rho((\bm{u},\bm{v}),(\bm{u}',\bm{v}'))=\|\bm{u}-\bm{u}'\|+\|\bm{v}-\bm{v}'\|$, where $\|\cdot\|$ is the Euclidean norm. To verify the first conditions in Lemma \ref{lemma:clt_partial_sum_process}, note that 
$$
\|Z_{ni}\|_{\mathcal{F}}=\omega_n|\varepsilon_i|\Ind_{\{\bX_i\in I_{[j(-c\bm{1}):j(c\bm{1})]}\}}.
$$
For any $\eta >0$,
\begin{align*}
    \sum_{i=1}^{n}\E \|Z_{ni}\|^2_{\mathcal{F}}\Ind_{\{\|Z_{ni}\|_{\mathcal{F}}>\eta\}}
    &\leq  \omega_n^2\sum_{i=1}^n \E[\varepsilon_i^2\Ind_{\{X_i\in I_{[j(-c\bm{1}):j(c\bm{1})]}\}} \Ind_{\{\omega_n|\varepsilon_i|>\eta\}}]\\
     & = n\omega^2 \int_{I_{[j(-c\bm{1}):j(c\bm{1})]}} g(\bx)\mathrm{d}\bx \times \E [ \varepsilon^2 \Ind_{\{|\varepsilon|>\eta\omega_n^{-1}\}}],
\end{align*}
which is bounded by a constant multiple of $ (2c)^s D_s^J(c\bm{1},c\bm{1})\E [ \varepsilon^2 \Ind_{\{|\varepsilon|>\eta\omega_n^{-1}\}}]$, and hence goes to zero.

To check the second condition, note that 
\begin{eqnarray}
\lefteqn{\sum_{i=1}^n \E | Z_{ni}(\bm{u},\bm{v})-Z_{ni}(\bm{u}',\bm{v}')|^2 }\nonumber\\
&&\leq n \omega_n^2 \E\varepsilon^2
\E |\Ind_{\{\bX\in I_{[j(-\bm{u}):j(\bm{v})]}\}} - \Ind_{\{\bX\in I_{[j(-\bm{u}'):j(\bm{v}')]}\}}|^2. \label{quadratic variation}
\end{eqnarray}
The last factor can be bounded by a constant multiple of 
\begin{align*}
& \big(\prod_{k=1}^s r_{n,k}\big) \big[\prod_{k=1}^s (u_k+v_k+O(J_k^{-1}r_{n,k}^{-1})) D_s^J(\bm{u},\bm{v})\\
&\qquad\qquad\quad + \prod_{k=1}^s (u'_k+v'_k+O(J_k^{-1}r_{n,k}^{-1})) D_s^J(\bm{u}',\bm{v}')\\
&\qquad\qquad\quad -2 \prod_{k=1}^s (u_k\wedge u'_k+v_k\wedge v'_k+O(J_k^{-1}r_{n,k}^{-1})) D_s^J(\bm{u}\wedge \bm{u}',\bm{v}\wedge \bm{v}')\big]. 
\end{align*}
Note that $\prod_{k=1}^s r_{n,k} =(n\omega_n^2)^{-1}$. This gives a bound for \eqref{quadratic variation} a constant multiple of 
\begin{align*}
   &  \prod_{k=1}^s (u_k+v_k) D_s(\bm{u},\bm{v}) + \prod_{k=1}^s (u'_k+v'_k) D_s(\bm{u}',\bm{v}')\\
    &\qquad\qquad -2 \prod_{k=1}^s (u_k\wedge u'_k+v_k\wedge v'_k) D_s(\bm{u}\wedge \bm{u}',\bm{v}\wedge \bm{v}'). 
\end{align*}
If $\rho((\bm{u},\bm{v}),(\bm{u}',\bm{v}'))\leq \delta_n$, using Lemma~\ref{lemma:delta}, this expression is bounded by a constant multiple of 
$\delta_n$. Hence the assertion is verified for every $\delta_n \to 0$. 

It remains to verify the third condition of Lemma~\ref{lemma:clt_partial_sum_process}. For any $\epsilon>0$, we consider the partition $\mathcal{P}(\delta')$ given by (6.6) with some $\delta'>0$ depending on $\epsilon$, to be determined later. Let $0=s_0 < s_1 <\ldots < s_l = c$ with $(s_{t-1},s_t]$ of equal length at most $\delta'$ and $l\leq 2c/\delta'$. 
Then $\mathcal{F}$ is covered by $\{(\bu,\bv) \in [\bm{0},c\bm{1}]^2: s_{t_k-1}< u_k \leq s_{t_k}, s_{r_k-1}< v_k \leq s_{r_k}, 1\leq k\leq d\}$,  $\bm{t},\bm{r}\in\{1,\ldots,l\}^d$. Let $\cF_{\epsilon j}^n$ stand for the elements of the partition indexed by $j$ arranged with a certain ordering.
Then for every $j$, 
\begin{eqnarray*}
    \lefteqn{\sum_{i=1}^{n}\E \big[\sup_{f,g\in \mathcal{F}_{\epsilon j}^n} |Z_{ni}(f)-Z_{ni}(g)|^2\big]}\\
     && \le n\omega^2_n \E\varepsilon^2 \E\big|\Ind_{\{ \bX\in I_{[j(-\bm{s}_{\bm{t}}):j(\bm{s}_{\bm{r}})]}\}}-\Ind_{\{\bX\in I_{ [j(-\bm{s}_{\bm{t}-\bm{1}}):j(\bm{s}_{\bm{r}-\bm{1}})] } \}}\big|^2\\
   &&\lesssim \E\varepsilon^2 \big(\prod_{k=1}^d (s_{t_k}+s_{r_k}+O(J_k^{-1}r_{n,k}^{-1}))- \prod_{k=1}^d(s_{t_k-1}+s_{r_k-1}+O(J_k^{-1}r_{n,k}^{-1})) \big)\\
    && \lesssim \E\varepsilon^2 \big(\prod_{k=1}^d (s_{t_k}+s_{r_k})- \prod_{k=1}^d(s_{t_k-1}+s_{r_k-1}) \big),
\end{eqnarray*}
as $s_{t_k},s_{r_k}$ are bounded by $c$ and $J_k\gg r_{n,k}^{-1}$ for $k=1,\ldots, d$. 
By Lemma \ref{lemma:delta}, the above expression is bounded by a constant multiple of $\delta'$. Thus $\delta'$ can be set to a suitable multiple of $\epsilon^2$ to meet the partitioning condition, while the bracketing number $\mathcal{N}_{[\,]}(\epsilon,\mathcal{F}, \|\cdot\|_n)$  with respect to the empirical $\LL_2$-metric is bounded by $N_{\epsilon}=l^{2d}\leq (2c/\delta')^{2d}=C\epsilon^{-4d}$ for some constant $C>0$. 
Then $\int_0^{\delta_n}\sqrt{\log \mathcal{N}_{[\,]}(\epsilon, \mathcal{F},\|\cdot\|_n)} \d\epsilon \leq \int_0^{\delta_n} \sqrt{\log (C\epsilon^{-4d})}\d\epsilon \to 0$, for any $\delta_n\to 0$.
\end{proof}

\begin{proof}[Proof of Lemma 6.5]
By Assumption 1, for $(\bm{u},\bm{v})\in [\bm{0},c\bm{1}]^2$, 
\begin{equation*}
     \overline{f_0(\bX)}|_{I_{[j(-\bm{u}):j(\bm{v})]}}-f_0(\bx_0)
     =\frac{\displaystyle\sum_{i:\bX_i\in I_{[j(-\bm{u}):j(\bm{v})]}} \sum_{\bm{l}\in L^*} {\partial^{\bm{l}}f_0(\bx_0)}(\bX_i-\bx_0)^{\bm{l}}/{\bm{l}!} }
    {N_{[j(-\bm{u}),j(\bm{v})]}} + o(\omega_n),
\end{equation*}
as $u_k,v_k$ are bounded by $c$. We observe that 
\begin{align*}
    {\E\big[   \omega_n \sum_{i:\bX_i\in I_{[j(-\bm{u}):j(\bm{v})]}} (\bX_i-\bx_0)^{\bm{l}}  \big]}= n\omega_n \int_{I_{[j(-\bm{u}):j(\bm{v})]}} \prod_{k=1}^s (\bx-\bx_0)_k^{l_k} g(\bx) \mathrm{d}\bx.
\end{align*}
Again, by writing $g(\bx)=g(\bx_s) + (g(\bx)  - g(\bx_s))$ and using the continuity of $g(\bx)$ at $\bx_0$, the right-hand side of the last display reduces to
\begin{align*}
 n\omega_n  \prod_{k=1}^s \frac{r_{n,k}^{l_k+1}}{l_k+1}\big[v_k ^{l_k+1} - (-u_k)^{l_k+1} + O((J_kr_{n,k})^{-1})\big]( D^J_s(\bm{u},\bm{v}) + o(1)).
\end{align*}
As $\bm{l}\in L^{\ast}$ with $\sum_{k=1}^s l_k/\beta_k=1$, $n\omega_n  \prod_{k=1}^s {r_{n,k}^{l_k+1}} = 1$. Since $J_k\gg r_{n,k}^{-1}$ and that $u_k$ and $v_k$ are all bounded for every $k$, the expression converges to
$
    \prod_{k=1}^s (l_k+1)^{-1}[v_k ^{l_k+1} - (-u_k)^{l_k+1} ] D_s(\bm{u},\bm{v}).
$
 Further, 
\begin{align*}
	\text{Var}\big( \omega_n\sum_{i:\bX_i\in I_{[j(-\bm{u}):j(\bm{v})]}} (\bX_i-\bx_0)^{\bm{l}} \big)
	&= n\omega^2_n \text{Var}\left(  (\bX-\bx_0)^{\bm{l}} \Ind_{\{\bX\in I_{[j(-\bm{u}):j(\bm{v})]}\}} \right)\\
	& \le  n\omega^2_n\E\big(  (\bX-\bx_0)^{2\bm{l}} \Ind_{\{\bX\in I_{[j(-\bm{u}):j(\bm{v})]}\}} \big)
\end{align*}
is bounded by a constant multiple of  $n\omega_n^{2+\sum_{k=1}^s (2l_k +1)/\beta_k}=n^{-{2}/\{2+\sum_{k=1}^s\beta_k^{-1}\}}$.
Together these two imply the assertion.
\end{proof}

\begin{proof}[Proof of Lemma 6.6]

We first prove $\Pi(f_*(\bx_0)-f_0(\bx_0)\leq M_n\omega_n|\mathbb{D}_n)\to 1$ in $\P_0$-probability. For the ease of notation, we show this for the case $s=d$ only. By the max-min formula, we have
\begin{align*}
  f_*(\bx_0)-f_0(\bx_0) &\leq  \max_{\bm{u}\succeq \bm{0}}
    \frac{\sum_{[j(-\bm{u}):j(\bm{1})]}N_{\bj}\theta_{\bj}}{\sum_{[j(-\bm{u}):j(\bm{1})]}N_{\bj}} - f_0(\bx_0)\\
    & = \omega_n \max_{\bu\succeq \bm{0}}\{A_n(\bu,\bm{1};\bm{\theta}) + A'_n(\bu,\bm{1}) + B_n(\bu,\bm{1})\},
\end{align*}
where $A_n$, $A_n'$ and $B_n$ are defined in (6.3)--(6.5).

Let 
$$
E_n=\{a_1 {n}/{(2\prod_{k=1}^d J_k)}\leq \min_{\bj} N_{\bj} \leq \max_{\bj} N_{\bj}\leq 2a_2{n}/({\prod_{k=1}^d J_k})\}.
$$
Then by Lemma \ref{lemma:submartingale}, writing $\psi_{\bj} = \theta_{\bj} -\E[\theta_{\bj}|\mathbb{D}_n]$, we can bound $$\E \big[\omega_n \sup_{\bm{u}\succeq\bm{0}} |{A_n(\bu,\bm{1};\bm{\theta})}|\big| \mathbb{D}_n\big] =\E \big[ \sup_{\bm{u}\succeq \bm{1}} \big|{\frac{\sum_{[j(-\bm{u}+\bm{1}):j(\bm{1})]}N_{\bj}\psi_{\bj}}{\sum_{[j(-\bm{u}+\bm{1}):j(\bm{1})]}N_{\bj}}}\big| \big| \mathbb{D}_n\big]$$ by the sum of the supremums over subregions $\prod_{k=1}^d [2^{h_k}\leq u_k \leq 2^{h_k+1}]$ as    
\begin{align*}
   & \sum_{\substack{h_k\geq 0\\1\leq k \leq d}}\E \big[ \sup_{\substack{2^{h_k}\leq u_k \leq 2^{h_k+1} \\ 1\leq k \leq d} } \big|{\frac{\sum_{[j(-\bm{u}+\bm{1}):j(\bm{1})]}N_j\psi_{\bj}}{\sum_{[j(-\bm{u}+\bm{1}):j(\bm{1})]}N_{\bj}}}\big| \big| \mathbb{D}_n\big]\\
  & \quad \leq \sum_{\substack{h_k\geq 0\\1\leq k \leq d}} \frac{1}{\sum_{[j(-2^{\bm{h}}+\bm{1}):j(\bm{1})]}N_{\bj}}\E \big[ \sup_{2^{h_k}\leq u_k \leq 2^{h_k+1} } \big|{\sum_{[j(-\bm{u}+\bm{1}):j(\bm{1})]}N_{\bj}\psi_{\bj}}\big| \big| \mathbb{D}_n\big]\\
  & \quad \le \sum_{\substack{h_k\geq 0 \\ 1\leq k \leq d}} \frac{1}{\sum_{[j(-2^{\bm{h}}+\bm{1}):j(\bm{1})]}N_{\bj}}
  \big(\E\big[
  \big|{\sum_{[j(-2^{\bm{h}+\bm{1}}+\bm{1}):j(\bm{1})]}N_{\bj}\psi_{\bj}}\big|^2 \big| \mathbb{D}_n
  \big]\big)^{1/2}.
  \end{align*}
  Using (3.7), on the event $E_n$, this can be bounded by 
  \begin{eqnarray*}
   \lefteqn {\sigma_n\sum_{\substack{h_k\geq 0\\1\leq k \leq d}} \frac{\big(\sum_{[j(-2^{\bm{h}+\bm{1}}+\bm{1}):j(\bm{1})]}N_{\bj}\big)^{1/2}}{\sum_{[j(-2^{\bm{h}}+\bm{1}):j(\bm{1})]}N_{\bj}}}\\
   && \lesssim  \sigma_n\sum_{\substack{h_k\geq 0\\1\leq k \leq d}} \frac{\big(n(\prod_{k=1}^d J_k)^{-1}\cdot (\prod_{k} r_k\cdot 2^{h_k+1}J_k)\big)^{1/2}}{n(\prod_{k=1}^d J_k)^{-1}\cdot (\prod_{k} r_k\cdot 2^{h_k}J_k)}.
   \end{eqnarray*}
   This is clearly bounded by a constant multiple of $ \omega_n \sigma_0 \sum_{\substack{h_k\geq 0\\1\leq k \leq d}} 2^{-\sum_k{h_k}/2}$, and hence  $$
   \Pi(\max\{A_n(\bu,\bm{1};\bm{\theta}) : \bu\succeq \bm{0}\}>M_n\omega_n|\mathbb{D}_n)\to 0$$
   in $\P_0$-probability.

We have shown in (6.9) that
\begin{align*}
    \sup_{\bm{u}\succeq \bm{0}} |A'_n(\bm{u},\bm{1})| = O_{\P_0}\big(\omega_n^{1+\sum_{k=1}^s\beta_k^{-1}}\prod_{k=1}^d J_k\big)\to 0
\end{align*}
by the choice of $J_k$ in $\P_0$-probability.

To bound $B_n$, we also decompose $B_n$ into an approximation part and an error part, and bound these two parts separately. Using the similar calculation for the expectation of $\omega_n\sup_{\bu\succeq \bm{0}}|A_n(\bu,\bm{1};\bm{\theta})|$, restricted on the event $E_n$, we obtain 
$$\E\big[ \sup_{\bm{u}\succeq \bm{0}}|\bar{\varepsilon}|_{I_{[j(-\bm{u}):j(\bm{1})]}}|\Ind_{A_n} \big]=O(\omega_n).$$ 
By the monotonicity of $f_0$ and Assumption 1, 
$\overline{f_0(\bX)}|_{I_{[j(-\bm{u}):j(\bm{1})]}}-f_0(\bx_0) \leq f_0(\bx_0+ \bm{r}_n + \bJ^{-1})-f_0(\bx_0)$, which can be expanded as 
$$ \sum_{\bm{l}\in L^*} {\partial^{\bm{l}}f_0(\bx_0)}  \bm{r}_n^{\bm{l}}/{\bm{l}!} + o(\omega_n)=O(\omega_n).
$$
Combining these bounds, the claim follows. For the other side, we note that
\begin{align*}
    -(f_*(\bx_0)-f_0(\bx_0))  & \leq  - \omega_n \min_{\bv\succeq \bm{0}}\{A_n(\bm{1},\bv;\bm{\theta}) + A'_n(\bm{1},\bv) + B_n(\bm{1},\bv)\}\\
    & =  \omega_n \max_{\bv\succeq \bm{0}}\{ |A_n(\bm{1},\bv;\bm{\theta})| +  |A'_n(\bm{1},\bv)| + |B_n(\bm{1},\bv)|\}
\end{align*}
and apply the same line of arguments.

\end{proof}

\begin{proof}[Proof of Lemma 6.7]

We write
$$
f_{\ast}(\bx_0) - f_0(\bx_0) = \omega_n \max_{\bm{u}\succeq \bm{0}}\min_{\bm{v}\succeq \bm{0}} \{A_n(\bu,\bv) + A'_n(\bu,\bv) + B_n(\bu,\bv)\}.
$$
Furthermore, we write $B_n(\bu,\bv) = B_{1n}(\bu,\bv) + B_{2n}(\bu,\bv)$, where
\begin{align*}
    B_{1n}(\bu,\bv) & = \omega_n^{-1}(\bar{\varepsilon}|_{I_{[j(-\bm{u}):j(\bm{v})]}}),\\
    B_{2n}(\bu,\bv) & = \omega_n^{-1}(\overline{f_0(\bX)}|_{I_{[j(-\bm{u}):j(\bm{v})]}}-f_0(\bx_0)).
\end{align*}
For $c>\max\{(1-x_{0,k}):s+1\leq k\leq d\} $, we only need to consider the event $\{\max\{ v^*_k: 1\leq k\leq s\}>c\}$. 
By the monotonicity of $f$, we have
  $  \overline{f_0(\bX)}|_{I_{[j(-\bm{u}^*):j(\bm{v}^*)]}}-f_0(\bx_0)\geq \overline{f_0(\bX)}|_{I_{[j(-\bm{1}):j(\bm{v}^*)]}}-f_0(\bx_0)$. By Lemma \ref{lemma:c5}, on an event with $\P_0$-probability tending to $1$, up to  $o(\omega_n \max\{v^{\ast\beta_k}_k: 1\leq k \leq s\})$,  this can be expressed as 
\begin{eqnarray*}
    \lefteqn{\sum_{\bm{l}\in L^*}\frac{\partial^{\bm{l}}f_0(\bx_0)}{\bm{l}!}
    \frac{\sum_{i:\bX_i\in I_{[j(-\bm{1}):j(\bm{v}^*)]}} (\bX_i-\bx_0)^{\bm{l}}}{\sum_{[j(-\bm{1}):j(\bm{v}^*)]}N_{\bj}}}\\
     &&= \sum_{\bm{l}\in L^*}\frac{\partial^{\bm{l}}f_0(\bx_0)}{\bm{l}!}
    \frac{\int_{I_{[j(-\bm{1}):j(\bm{v}^*)]}} (\bx-\bx_0)^{\bm{l}}g(\bx)\mathrm{d} \bx}{\int_{I_{[j(-\bm{1}):j(\bm{v}^*)]}} g(\bx)\mathrm{d} \bx}\\
    &&=  \sum_{\bm{l}\in L^*}\frac{\partial^{\bm{l}}f_0(\bx_0)}{\bm{l}!}
    \prod_{k=1}^s
    \frac{\int_{\ceil{(x_{0,k}-r_{n,k}) J_k}/J_k-1/J_k}^{\ceil{(x_{0,k}+r_{n,k} v^*_k)J_k}/J_k}(x-x_0)^{l_k} g(x)\d x} {\int_{\ceil{((x_0)_k-r_{n,k}) J_k}/J_k-1/J_k}^{\ceil{((x_0)_k+r_{n,k} v^*_k)J_k}/J_k}g(x)\d x}
    \\
    && \lesssim \sum_{\bm{l}\in L^*}\frac{\partial^{\bm{l}}f_0(x_0)}{(\bm{l}+\bm{1})!}
    \prod_{k=1}^s
    \frac{(v^*_kr_{n,k})^{l_k+1}-(-r_n)_k^{l_k+1}}{(1+v^*_k)r_{n,k}},
\end{eqnarray*}
which is bounded above by a constant multiple of $\omega_n \max \{ v_k^{\ast\beta_k}: {1 \leq k \leq s}\}$. 
As $\Pi(\abs{f_*(\bx_0)-f_0(\bx_0)}>M_n\omega_n|\mathbb{D}_n)\to_{\P_0} 0$, in view of Lemma 6.6,
this gives that $\Pi(\max\{ v^*_k: 1\leq k\leq s\}\leq c|\mathbb{D}_n)\to_{\P_0} 1$ when $n, c\to\infty$. 

Define $\Lambda^{(0)}=\{v^*_d<c^{-\gamma}\}$ and $\Lambda^{(1)}=\{\max({v_k}^*: 1\leq k \leq d)\leq c\}$. By the previous proof, for every $\eta,\epsilon>0$, we have $\P_0(\Pi(\Lambda^{(1)}|\mathbb{D}_n)\geq 1 - \eta)\geq 1-\epsilon$ when $n$ and $c$ are large enough.
We consider $s<d$ for simplicity. The case $s=d$ follows with a slightly different bound by the same argument.
For some $a,b,\gamma>0$ to be determined later, define a subset $R_{a,b,\gamma}(c)\subset \RR^d_{\geq 0}\times\RR^d_{\geq 0}$ by  $$\Big\{(\bm{u},\bm{v}): 
\begin{array}{ll} 
	0\leq u_k \leq c^a\Ind_{1\leq k\leq s} + x_{0,k}\Ind_{s+1\leq k\leq d}, & 0\leq u_d \leq c^{-b}\\ 0\leq v_k \leq c\Ind_{1\leq k\leq s} + (1-x_{0,k})\Ind_{s+1\leq k\leq d}, & 0\leq v_d \leq c^{-\gamma}
\end{array}	\Big\}. $$ 
Define these two events, for some $C_1>0$ and $C'_1>0$, 
\begin{align*}
	\Lambda^{(2)}=\{\sup_{(\bm{u},\bm{v})\in R_{a,b,\gamma}}\abs{\HH_{2,n}(\bm{u},\bm{v})-\HH_{2,n}(\bu,\bv\Ind_{[s+1:d-1]})}\leq (C_1/\eta)\sqrt{c^{as-\gamma}\log c} \},\\
	\Lambda'^{(2)}=\{\sup_{(\bm{u},\bm{v})\in R_{a,b,\gamma}}\abs{\HH_{1,n}(\bm{u},\bm{v})-\HH_{1,n}(\bu,\bv\Ind_{[s+1:d-1]})}\leq (C'_1/\epsilon)\sqrt{c^{as-\gamma}\log c} \}.
\end{align*}
Since $\HH_{2,n}(\bm{u},\bm{v})\leadsto \sigma_0 H_2(\bm{u},\bm{v})$ 
in $\P_0$-probability in $\LL_{\infty}([\bm{0},c\bm{1}]\times[\bm{0},c\bm{1}])$  and  
$$
\HH_{1,n}(\bm{u},\bm{v})\leadsto \sigma_0 H_1(\bm{u},\bm{v}) \text{ in }\LL_{\infty}([\bm{0},c\bm{1}]\times [\bm{0},c\bm{1}]),
$$
for any $c>0$, it follows from 
Lemma \ref{lemma:c6} that, when $n$ and $c$ are large enough, there exist constants $C_1$ and $C'_1$ depending on $\sigma_0^2,d,a$ only, such that $\P_0(\Pi(\Lambda^{(2)}|\mathbb{D}_n)\geq 1-\eta)\geq 1-\epsilon$ and $\P_0(\Lambda'^{(2)})\geq 1-\epsilon$. 

By Lemma~\ref{lemma:c5}, given any $\eta,\epsilon>0$, there  exists $\rho_{\eta\epsilon}>0$ such that, when $a>1$, $c>1$ and $n$ large enough, we have
\begin{align*}
    &\P_0\times\Pi\big(\min_{\substack{0\leq v_k \leq c\Ind_{1\leq k\leq d}\\ 0\leq v_d \leq c^{-\gamma}}} \max_{\substack{0\leq u_k \leq c^a \Ind_{1\leq k\leq d}\\0\leq u_d\leq c^{-b}}}\\
    &\qquad\qquad\qquad \HH_{2,n}(\bu,\bv\Ind_{[s+1:d-1]})  + \HH_{1,n}(\bu,\bv\Ind_{[s+1:d-1]}) \leq \sqrt{c^{as-b/x_{0,d}}}\rho_{\eta\epsilon} \big)\\
    & \quad \lesssim 
    \P\big(\min_{\substack{0\leq v_k \leq c\Ind_{s+1\leq k\leq d}\\ v_d =0}} \max_{\substack{0\leq u_k \leq c^a \Ind_{1\leq k\leq d}\\0\leq u_d\leq c^{-b}}}   H_{2}(\bm{u},\bm{v})+H_{1}(\bm{u},\bm{v}) \leq \sqrt{c^{as-b/x_{0,d}}}\rho_{\eta\epsilon}\big)\\
    & \quad \leq \P \big(\min_{\substack{0\leq v_k \leq c\Ind_{s+1\leq k\leq d}\\ v_d =0}} \max_{\substack{0\leq u_k \leq  \Ind_{1\leq k\leq d}\\0\leq u_d\leq x_{0,d}}} H_{2}(\bm{u},\bm{v})+H_{1}(\bm{u},\bm{v}) \leq \rho_{\eta\epsilon} \big)\leq \eta\epsilon.
\end{align*}
Hence, $\P_0\times \Pi(\Lambda^{(3)})\geq 1-\eta\epsilon$ for sufficiently large $n$, where $\Lambda^{(3)}$ stands for the event that 
for any  $0\leq v_k \leq c\Ind_{\{1\leq k\leq d\} },  0\leq v_d \leq c^{-\gamma}$, 
there exists  $0\leq u_k \leq c^a \Ind_{\{1\leq k\leq d\} }, 0\leq u_d\leq c^{-b}$  such that $\HH_{2,n}(\bu,\bv\Ind_{[s+1:d-1]}) + \HH_{1,n}(\bu,\bv\Ind_{[s+1:d-1]}) > C_2\sqrt{c^{as-b/x_{0,d}}}$ for  some constant $C_2>0$ Therefore, we have $\P_0(\Pi(\Lambda^{(3)}) \geq 1-\eta)\geq 1-\epsilon $.

Let $u(c)=(c^a\Ind_{\{1\leq k\leq s\}} +x_{0k}\Ind_{\{s+1\leq k\leq d-1\} }+c^{-b}\Ind_{\{k=d\}}: k=1,\ldots,d)$ and $v(c)=(c\Ind_{1\leq k\leq s}+(1-x_{0,k})\Ind_{s+1\leq k\leq d-1}+c^{-\gamma}\Ind_{k=d}: k=1,\ldots,d)$.
By Bernstein's inequality (cf. Lemma 2.2.9 of \cite{van1996weak}), 
\begin{align*}
   \P_0\big(\big|\sum_{[j(-u(c)):j(v(c))]}N_{\bj}-\E[\sum_{[j(-u(c)):j(v(c))]}N_{\bj}]\big|\geq n\sigma^2_c \big)\leq C_3\exp\{-C_3^{-1}n\sigma^2_c\},
\end{align*}
where $\sigma^2_c=\text{Var}(\Ind_{\{\bX\in I_{[j(-u(c)):j(v(c))]\}}})\leq\E(\Ind_{\{\bX\in I_{[j(-u(c)):j(v(c))]\}}})$. 
Then for $$
\Lambda^{(4)}=\big\{ \sum_{[j(-u(c)):j(v(c))]}N_j \leq 2\E[\sum_{[j(-u(c)):j(v(c))]}N_j]\big\},
$$
it follows that $\P_0(\Lambda^{(4)})\to 1$.

On $\Lambda^{(0)}\cap \Lambda^{(1)}\cap\Lambda'^{(1)}\cap\Lambda^{(2)}\cap\Lambda^{(3)}\cap\Lambda^{(4)}$, it
holds that 
\begin{eqnarray*}
    \lefteqn{\max_{\substack{0\leq u_k \leq c^a \Ind_{1\leq k\leq d}\\0\leq u_d\leq c^{-b}}} A_n(\bm{u},\bm{v}^*)+B_{1,n}(\bu,\bv^{\ast})}\\ &&=\max_{\substack{0\leq u_k \leq c^a \Ind_{1\leq k\leq d}\\0\leq u_d\leq c^{-b}}}\frac{\HH_{2,n}(\bm{u},\bm{v}^*)+\HH_{1,n}(\bm{u},\bm{v}^*)}{\omega^2_n \sum_{[j(-\bm{u}):j(\bm{v}^*)]}N_j}\\
    &&\geq \frac{C_2\sqrt{c^{as-b}}-C_1\sqrt{c^{as-\gamma}\log c} /\eta}{(c^a+c)^s(c^{-b}+c^{-\gamma})},
\end{eqnarray*}
which is greater than or equal to $C_3 c^{(b-as)/2}$ for some positive constant $C_3$.

On the other hand, for some positive constant $C_4$, we have
\begin{eqnarray*}
    \lefteqn{\min_{\substack{0\leq u_k \leq c^a \Ind_{1\leq k\leq d}\\0\leq u_d\leq c^{-b}}}
    B_{2n}(\bu,\bv^{\ast})}\\
    && = \omega_n^{-1} \min_{\substack{0\leq u_k \leq c^a \Ind_{1\leq k\leq d}\\0\leq u_d\leq c^{-b}}}
    \overline{f_0(\bX)}|_{I_{[j(-\bm{u}):j(\bm{v}^*)]}}-f_0(\bx_0)\\
    &&\geq\omega_n^{-1}( f_0(\bx_0-(c^a+o(1)) \bm{r}_n)-f_0(\bx_0) ),
\end{eqnarray*}
which is greater than or equal to $ -C_4 c^{a\max_{k\leq s}\beta_k}$. 
In view of (6.9), we conclude that, on $\Lambda^{(0)}\cap \Lambda^{(1)}\cap\Lambda'^{(1)}\cap\Lambda^{(2)}\cap\Lambda^{(3)}\cap\Lambda^{(4)}$,
 $f^*(\bx_0)-f(\bx_0)$ is bounded below by 
\begin{eqnarray*}
     \lefteqn{\omega_n \max_{\substack{0\leq u_k \leq c^a \Ind_{1\leq k\leq d}\\0\leq u_d\leq c^{-b}}} \{A_n(\bm{u},\bm{v}^*) + B_{1n}(\bm{u},\bm{v}^*)\} }\\
     &&  + \min_{\substack{0\leq u_k \leq c^a \Ind_{1\leq k\leq d}\\0\leq u_d\leq c^{-b}}} B_{2n}(\bm{u},\bm{v}^*) - \max_{\substack{0\leq u_k \leq c^a \Ind_{1\leq k\leq d}\\0\leq u_d\leq c^{-b}}} |A'_{n}(\bm{u},\bm{v}^*)| \\
    &&\geq \omega_n c^{a\max\{\beta_k:k\leq s\} }(C_3c^{(b-as)/2-a\max_{k\leq s}\beta_k}-C_4+o(1)).
\end{eqnarray*}
Take $a=3$, $b\geq 2(1+a\max\{\beta_k:k\leq s\} )+as$ and $\gamma=b+1$. Hence the intersection of these events can only occur with arbitrarily small posterior probability in $\P_0$-probability in view of Lemma 6.6, for large enough $n$ and $c$. 
As $\Pi(\Lambda^{(0)}\cap\Lambda^{(1)}\cap\Lambda^{(2)}\cap\Lambda^{(3)}|\mathbb{D}_n)\to_{\P_0} 0$ while $\Pi(\Lambda^{(1)}\cap\Lambda^{(2)}\cap\Lambda^{(3)}|\mathbb{D}_n)\to_{\P_0} 1$ when $n,c\to \infty$,
thus we can conclude $\Pi([\Lambda^{(0)}]^c\cap \Lambda^{(1)}|\mathbb{D}_n)\to 1$ in $\P_0$-probability. 
\end{proof}

\def\thesection{\Alph{section}}
\section{Appendix}
\label{sec:appendixB}

\begin{lemma}
	\label{domain truncation}
	Let $\mathbb{D}_n$, $n\ge 1$, be a set of random observations with distribution $\P_0^n$. Let $W_n^*,W_{n,c}$, $c>0$, $n=1,2,\ldots$, be random variables  and let $\cL^{\ast}_n = \mathcal{L}(W_n^{\ast}|\mathbb{D}_n)$, $\cL^{\ast}_{n,c}= \mathcal{L}(W_n^{\ast}|\mathbb{D}_n)$ stand for their conditional distributions given $\mathbb{D}_n$ respectively, viewed as random measures on $\RR$. 
	Let  $W,W_c$, $c>0$, be random variables and $H$ be a random process, and $\cL_c = \mathcal{L}(W_c|H)$, $\cL = \mathcal{L}(W|H)$. Assume that 
	\begin{enumerate}
		\item [{\rm (i)}] for every $c>0$, $\cL^{\ast}_{n,c} \leadsto \cL_c$;
		\item [{\rm (ii)}]$\lim_{c\to\infty}\limsup_{n\to\infty} \P(W^{\ast}_{n,c}\neq W^{\ast}_n|\mathbb{D}_n)=0$ in $\P_0^n$-probability;
		\item [{\rm (iii)}] $\P(W_c\neq W)\to 0$ as $c\to\infty$.   
	\end{enumerate}
Then $\cL_n^*\leadsto \cL$.
\end{lemma}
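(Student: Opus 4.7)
The plan is to prove that for every bounded continuous $\phi:\RR\to\RR$, the real-valued random variable $\int\phi\,d\cL^{\ast}_n = \E[\phi(W^{\ast}_n)\mid\mathbb{D}_n]$ converges in distribution to $\int\phi\,d\cL = \E[\phi(W)\mid H]$; by the Portmanteau theorem in the space of probability measures equipped with the weak topology, this characterizes the stated weak convergence of random probability measures $\cL^{\ast}_n \leadsto \cL$. The backbone of the argument is the triangle decomposition
\begin{align*}
\int\phi\,d\cL^{\ast}_n - \int\phi\,d\cL
&= \Big(\int\phi\,d\cL^{\ast}_n - \int\phi\,d\cL^{\ast}_{n,c}\Big) + \Big(\int\phi\,d\cL^{\ast}_{n,c} - \int\phi\,d\cL_c\Big) \\
&\quad + \Big(\int\phi\,d\cL_c - \int\phi\,d\cL\Big),
\end{align*}
whose three summands I plan to control using (ii), (i), and (iii) respectively.

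For the outer two ``truncation'' terms, boundedness of $\phi$ yields the crude pointwise bounds
$\abs{\int\phi\,d\cL^{\ast}_n - \int\phi\,d\cL^{\ast}_{n,c}} \le 2\|\phi\|_\infty\,\P(W^{\ast}_n\neq W^{\ast}_{n,c}\mid\mathbb{D}_n)$
and
$\abs{\int\phi\,d\cL_c - \int\phi\,d\cL} \le 2\|\phi\|_\infty\,\P(W_c\neq W\mid H)$.
By (ii), the first upper bound is $o_{\P_0^n}(1)$ after sending $n\to\infty$ and then $c\to\infty$. For the second, taking expectations collapses the conditional probability to $\P(W_c\neq W)$, which tends to $0$ by (iii); hence the conditional probability itself tends to $0$ in probability as $c\to\infty$. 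The middle summand is handled directly by (i): the functional $\mu\mapsto\int\phi\,d\mu$ on probability measures is continuous in the weak topology whenever $\phi$ is bounded continuous, so the continuous mapping theorem promotes $\cL^{\ast}_{n,c}\leadsto\cL_c$ into $\int\phi\,d\cL^{\ast}_{n,c}\leadsto\int\phi\,d\cL_c$ for each fixed $c$.

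The assembly is a standard two-stage $\epsilon$-argument in the spirit of the classical truncation device of Kim and Pollard. Given $\epsilon>0$ and a bounded Lipschitz test function $\phi$, I first pick $c=c(\epsilon)$ large enough that the first and third bounds above are each below $\epsilon$ with probability at least $1-\epsilon$, uniformly in all sufficiently large $n$; for this fixed $c$, I then let $n\to\infty$ and invoke (i) on the middle term to close the gap in distribution, say against the Lévy-Prokhorov or bounded-Lipschitz metric. A Slutsky-type fact -- if $\abs{X_n-Y_n}\to 0$ in probability and $Y_n\leadsto Z$, then $X_n\leadsto Z$ -- then delivers $\int\phi\,d\cL^{\ast}_n\leadsto\int\phi\,d\cL$, finishing the proof.

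The main obstacle here is bookkeeping rather than substance: the three summands live on three nominally distinct probability spaces -- the outer data law $\P_0^n$, the posterior law given $\mathbb{D}_n$, and the background space carrying $H$ -- so the various ``tends to $0$'' and ``weak convergence'' statements must be translated into a common language before being combined through the triangle inequality. In particular, (ii) is a statement in $\P_0^n$-probability about a quantity which is itself a conditional probability given $\mathbb{D}_n$, and marrying it with the distributional statement (i) is where one must be careful. Once this routine bridging is in place, no new idea is required, which is why the lemma acts as a plumbing device that lets the main proof of Theorem~\ref{thm: posterior_distribution} be reduced to establishing the easier truncated-domain weak convergence.
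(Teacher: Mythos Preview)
Your proposal is correct and takes essentially the same approach as the paper: a triangle decomposition into three terms controlled respectively by (ii), (i), and (iii), assembled via a two-stage $\epsilon$-argument that fixes $c$ large before sending $n\to\infty$. The only cosmetic difference is that the paper tests directly against a bounded uniformly continuous $F:\mathfrak{M}\to[0,1]$ on the space of probability measures and bounds $|\E F(\cL_n^{\ast})-\E F(\cL)|$, reducing to finitely many test-function integrals via the metric structure of the weak topology, whereas you test against the maps $\mu\mapsto\int\phi\,d\mu$ for $\phi\in C_b(\RR)$ and invoke the standard characterization of weak convergence of random measures through such projections.
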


\begin{proof}
	Let $\mathfrak{M}$ denote the collection of random probability measure on $(\RR, \mathscr{B})$, where $\mathscr{B}$ is the Borel $\sigma$-algebra. Fix a uniformly continuous function $f:\mathfrak{M} \to [0,1]$. For a chosen $\epsilon>0$, get $0<\eta<\epsilon$, $k$ and $g_1,\ldots, g_k$ uniformly continuous functions from $\RR$ to $[0,1]$ depending on $f$ only, such that $\sum_{j=1}^k |\int g_j \mathrm{d} Q - \int g_j \mathrm{d} Q'| < 2k\eta$ implies $|f(Q) - f(Q')| < \epsilon$.
	
	For any $n$ and $c$, we have 
\begin{equation*}
    |\E f(\cL_n^{\ast}) - \E f(\cL)|	\leq  \E |f(\cL_n^{\ast}) - f(\cL_{n,c}^{\ast})| + |\E f(\cL_{n,c}^{\ast}) - \E f(\cL_{c})| + \E |f(\cL_{c}) - f(\cL)|,
\end{equation*}
		so it suffices to bound each term for all sufficiently large $n$ and a suitable $c>0$. 
		
		Using (iii), get $c'>0$ such that $\P(W_c\neq W|H)<\epsilon$ for all $c>c'$ on a set $E$ with $\P (E^c)<\epsilon$. 
		
		From (ii), get $c^*\ge c'$ and $N^*\ge 1$ such that $\P (W_{n,c^*}\ne W_n|\mathbb{D}_n)<\eta$ on a set $E_n$ with $\P(E_n^c)<\eta$ for all $n\ge N^*$. 
		
		From (i), get $N\ge N^*$ such that $|\E f(\cL_{n,c^*}^{\ast}) - \E f(\cL_{c^*})| <\epsilon$ for all $n\ge N$. 
	
	Since $|f|\le 1$,  
	$$
	\E |f(\cL_n^{\ast}) - f(\cL_{n,c^*}^{\ast})|\le \epsilon+\P (|f(\cL_n^{\ast}) - f(\cL_{n,c}^{\ast})|>\epsilon),
	$$
	so it suffices to control 
	$$
	\P(\sum_{j=1}^k |\int g_j d\cL_{n,c^*}-\int g_j d\cL_{n}|\ge k\eta).
	$$
	The $j$th term 
	$$ \E [|g(W_{n,c^*}^*)-g(W_n^*)||\mathbb{D}_n]\le \eta+\P (|W_{n,c^*}^*-W_n^*|>\eta|\mathbb{D}_n)<2\eta,~ j=1,\ldots,k,
	$$  on the event $E_n$ for all $n\ge N$. Hence $\P (|f(\cL_n^{\ast}) - f(\cL_{n,c}^{\ast})|>\epsilon)<\epsilon$ on $E_n$ for all $n\ge N$. 
	
	By the same argument,  
	$\E |f(\cL_c^*) - f(\cL)|\le \epsilon+\P (|f(\cL_c) - f(\cL)|>\epsilon)$, and 
	$$
	\E [|g(W_{c^*})-g(W)|]\le \eta+\P (|W_{c^*}-W|>\eta)<2\eta,
	$$
	assuring that  $\P (|f(\cL_c) - f(\cL)|>\epsilon)<\epsilon$ on $E$. 
	
	Piecing these together, using the value $c=c^*$, for all $n\ge N$, we obtain that 
	$$
	|\E f(\cL_n^{\ast}) - \E f(\cL)| 	\leq  3 \epsilon+\P (E_n^c)+\P(E^c)<5\epsilon.
	$$
	Since $\epsilon>0$ is arbitrary, this completes the proof.
\end{proof}

\begin{lemma}\label{lemma:njconv}
	For $(\bm{u},\bm{v})\in [\bm{0}, c\bm{1}]\times[\bm{0}, c\bm{1}]$ such that  $u_k\leq x_{0,k}$, $v_k\leq 1-x_{0,k}$ for all $s+1\leq k\leq d$, we have that 
    $\omega_n^2\sum_{\bj\in [j(-\bm{u}):j(\bm{v})]} N_{\bj} \to_{\P_0} \prod_{k=1}^{s}(u_k+v_k)D_s(\bm{u},\bm{v})$.
\end{lemma}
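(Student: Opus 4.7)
The plan is to prove convergence in $\P_0$-probability by computing the expectation and variance of $\omega_n^2 \sum_{\bj \in [j(-\bm{u}):j(\bm{v})]} N_{\bj}$ separately, and then applying Chebyshev's inequality. Since $N_{\bj} = \sum_{i=1}^n \Ind\{\bX_i \in I_{\bj}\}$ and the boxes $I_{\bj}$ partition $[0,1]^d$, the sum of $N_{\bj}$ over the lattice $[j(-\bm{u}):j(\bm{v})]$ simply counts the sample points falling in the union $R_n(\bm{u},\bm{v}) := \bigcup_{\bj \in [j(-\bm{u}):j(\bm{v})]} I_{\bj}$, which is (up to boundary effects of size $J_k^{-1}$) the hyperrectangle
$$\prod_{k=1}^s\bigl[x_{0,k}-u_k r_{n,k},\, x_{0,k}+v_k r_{n,k}\bigr]\times \prod_{k=s+1}^d\bigl[x_{0,k}-u_k,\, x_{0,k}+v_k\bigr].$$

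For the expectation, I would write
$\E[\omega_n^2\sum_{\bj}N_{\bj}] = n\omega_n^2 \int_{R_n(\bm{u},\bm{v})} g(\bx)\,\d\bx$, and use the scaling identity $n\omega_n^2 = \prod_{k=1}^s r_{n,k}^{-1}$, which follows from $\omega_n^{1/\beta_k}=r_{n,k}$ together with $n\omega_n^{2+\sum_k \beta_k^{-1}}=1$. Substituting $x_k = x_{0,k}+ t_k r_{n,k}$ for $k\le s$ converts the first $s$ coordinate integrals into integrals over $[-u_k-O((J_k r_{n,k})^{-1}),\, v_k+O((J_k r_{n,k})^{-1})]$, with Jacobian $\prod_{k\le s}r_{n,k}$ that exactly cancels $n\omega_n^2$. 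By the continuity of $g$ in a neighborhood of the set $\{(x_{0,1},\ldots,x_{0,s},x_{s+1},\ldots,x_d)\}$ (Assumption~\ref{assumption_data}), I can replace $g(\bx)$ by $g(x_{0,1},\ldots,x_{0,s},x_{s+1},\ldots,x_d)$ with an error that vanishes as $\max_{k\le s}r_{n,k}\to 0$. The boundary corrections of order $(J_k r_{n,k})^{-1}$ are negligible since $J_k \gg r_{n,k}^{-1}$. This yields
$$\E[\omega_n^2\textstyle\sum_{\bj}N_{\bj}] \longrightarrow \prod_{k=1}^s(u_k+v_k)\,D_s(\bm{u},\bm{v}).$$

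For the variance, the observations $(\bX_i)$ are i.i.d., so $\text{Var}(\omega_n^2\sum_{\bj}N_{\bj}) \le \omega_n^4\, n\, \P_0(\bX\in R_n(\bm{u},\bm{v}))$. Using the bound $g \le a_2$ and $\text{vol}(R_n(\bm{u},\bm{v}))\lesssim \prod_{k\le s}r_{n,k}$, this is at most $\omega_n^4 \cdot n \cdot C\prod_{k\le s}r_{n,k} = C\omega_n^2 \to 0$, by the same scaling identity used above. Chebyshev's inequality then delivers the claimed convergence in $\P_0$-probability.

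The main obstacle, and it is a minor one, is just verifying that the ceiling operations in the definition of $j(\bm{t})$ introduce only lower-order boundary corrections under the assumption $J_k\gg r_{n,k}^{-1}$; this requires keeping track of two layers of approximation (discretization of the range, and continuity of $g$), but both produce negligible $o(1)$ factors relative to the leading order. No compactness or covering argument is needed since the claim is pointwise in $(\bm{u},\bm{v})$.
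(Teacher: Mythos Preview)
Your proposal is correct and follows essentially the same route as the paper: compute the expectation via $n\omega_n^2\int_{I_{[j(-\bm{u}):j(\bm{v})]}}g(\bx)\,\d\bx$, use the scaling identity $n\omega_n^2\prod_{k\le s}r_{n,k}=1$ together with the continuity of $g$ and $J_k\gg r_{n,k}^{-1}$ to extract the limit $\prod_{k=1}^s(u_k+v_k)D_s(\bm{u},\bm{v})$, then bound the variance by $\omega_n^2$ times a bounded quantity (the paper phrases this as the binomial variance inequality) and finish with Chebyshev. The only cosmetic difference is that the paper introduces an intermediate quantity $D_s^J(\bm{u},\bm{v})$ to separate the discretization error in the last $d-s$ coordinates from the continuity argument in the first $s$, whereas you handle both at once; the content is the same.
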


\begin{proof}
Let $\bx_s=(x_{0,1},\ldots,x_{0,s}, x_{s+1}, \ldots x_d)$. For $s<d$, let 
$$   D_s^J(\bm{u}, \bm{v})=\int_{ \substack{(\ceil{(x_{0,k} - u_k)J_k} - 1)/J_k<x_k\leq\ceil{(x_{0,k} + v_k)J_k}/J_k\\s+1\leq k \leq d}} g(\bx_s) \mathrm{d}x_{s+1}\cdots \mathrm{d}x_d,$$ 
 and  $D_d^J(\bm{u}, \bm{v})=D_d(\bm{u}, \bm{v})=g(\bx_0)$.
As $0<a_1 \leq g(\bx)\leq a_2 \leq \infty$ and $J_k\to\infty$ for every $k=s+1,\ldots, d$, we have 
$    \abs{D_s^J(\bm{u}, \bm{v}) - D_s(\bm{u}, \bm{v})} \lesssim \max\{ J_k^{-1}: {s+1\leq k \leq d}\} \to 0$ as $n\to \infty$.

By the continuity of $g(\bx)$ at $\bx_0$, and using the facts that $u_k,v_k\leq c$, $r_{n,k}\to 0$ for $1\leq k\leq s$, and $J_k\gg r_{n,k}^{-1}$, it follows that  
\begin{eqnarray*}
 \lefteqn{\E\big[\omega_n^{2}\sum_{\bj\in [j(-\bm{u}):j(\bm{v})]} N_{\bj}\big] = n \omega_n^2\int_{I_{[j(-\bm{u}):j(\bm{v})]}} g(\bx)\d \bx}\\ 
\\
&&= n \omega_n^2 \prod_{k=1}^{s}(u_k r_{n,k}+v_k r_{n,k} + O(J_k^{-1}))(D_s^J(\bm{u}, \bm{v}) + o(1))\\
&&= n \omega_n^{2+\sum_{k=1}^s \beta_k^{-1}}\prod_{k=1}^{s}(u_k+v_k + O(( r_{n,k}J_k)^{-1}))(D_s^J(\bm{u},\bm{v})+o(1))\\
&&\to \prod_{k=1}^{s}(u_k+v_k)D_s(\bm{u},\bm{v}).
\end{eqnarray*}
Further, as $\sum_{\bj\in [j(-\bm{u}):j(\bm{v})]} N_{\bj}$ is binomially distributed, 
\begin{align*}
    \text{Var}\big(\omega_n^{2}\sum_{\bj\in [j(-\bm{u}):j(\bm{v})]} N_{\bj}\big)
\leq \omega_n^2 \big(\prod_{k=1}^s\left(u_k+v_k\right)D_s(\bm{u},\bm{v}) + o(1)\big)\to 0.
\end{align*}
Thus conclusion now follows from Chebyshev's inequality.
\end{proof}

\begin{lemma}[Theorem 2.11.9 of \cite{van1996weak}]
	\label{lemma:clt_partial_sum_process}
	For each $n$, let $Z_{n1},\ldots,Z_{n m_n}$ be independent stochastic processes indexed by a totally bounded semi-metric space $(\mathcal{F},\rho)$. $\|Z_{ni}\|_{\cF}=\sup\{ |Z_{ni}(f)|: f\in \cF\}$, $i=1,\ldots,m_n$, $n\in \NN$. Suppose that
	\begin{enumerate}
		\item [{\rm (i)}] $\sum_{i=1}^{m_n} \E \pnorm{Z_{ni}}{\mathcal{F}}^2 \Ind_{ \{\pnorm{Z_{ni}}{\mathcal{F}}>\eta\} }\to 0$ for every $\eta>0$.
		\item [{\rm (ii)}] $\sup\{\sum_{i=1}^{m_n} \E\big(Z_{ni}(f)-Z_{ni}(g)\big)^2: \rho(f,g)<\delta_n \} \to 0$ for every $\delta_n\to 0$.
		\item [{\rm (iii)}] $\int_0^{\delta_n} \sqrt{\log \mathcal{N}_{[\,]}(\epsilon, \mathcal{F}, \|\cdot\|_n)}\ \d{\epsilon}\to 0$ for every $\delta_n\to 0$, where 
			\end{enumerate}
		\begin{align*}
\mathcal{N}_{[\,]}(\epsilon, \mathcal{F}, \|\cdot\|_n)= \min \{ N: \cup_{j=1}^N \mathcal{F}_{\epsilon j}^n	\supset \mathcal{F}, \; \E \sup_{f,g \in \mathcal{F}_{\epsilon j}^n} \abs{Z_{ni}(f)-Z_{ni}(g)}^2\leq \epsilon^2\}.
		\end{align*}
	Then the sequence $\sum_{i=1}^{m_n} (Z_{ni}- \E Z_{ni})$ is asymptotically tight in $\LL_{\infty}(\mathcal{F})$.
\end{lemma}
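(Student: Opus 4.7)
The plan is to invoke the standard characterization of asymptotic tightness in $\LL_\infty(\cF)$: since $(\cF,\rho)$ is totally bounded, it is enough to prove (a) marginal tightness of $S_n(f) := \sum_{i=1}^{m_n}(Z_{ni}(f) - \E Z_{ni}(f))$ for each fixed $f \in \cF$, and (b) asymptotic $\rho$-equicontinuity, i.e., for every $\epsilon>0$,
\[
\lim_{\delta \downarrow 0} \limsup_n \P^*\Bigl(\sup_{\rho(f,g) < \delta} |S_n(f) - S_n(g)| > \epsilon\Bigr) = 0.
\]
Marginal tightness (a) follows immediately from the Lindeberg-Feller CLT: condition (i) supplies the Lindeberg negligibility at a fixed $f$ and condition (ii) bounds the accumulated variance. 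All the work goes into (b), which I would establish by the Ossiander-style bracketing chaining that is standard in this setting.

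To carry out the chain, fix $\epsilon > 0$ and set $\epsilon_k = 2^{-k}\epsilon$ for $k=0,1,\ldots$. For each $k$, let $\{\cF_{\epsilon_k j}^n : j \le N_k\}$ be a minimal partition realizing the bracketing number $N_k = \mathcal{N}_{[\,]}(\epsilon_k,\cF,\|\cdot\|_n)$, and assign to each $f\in\cF$ a representative $\pi_k f$ in the cell of level $k$ containing $f$. Telescoping,
\[
S_n(f) - S_n(\pi_0 f) = \sum_{k=1}^{K} \bigl(S_n(\pi_k f) - S_n(\pi_{k-1} f)\bigr) + \bigl(S_n(f) - S_n(\pi_K f)\bigr),
\]
where $K=K_n$ will be chosen so that $\epsilon_K \to 0$ slowly enough to let the entropy integral run to zero. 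The terminal remainder is absorbed by condition (ii) applied to cells of diameter $\le \epsilon_K$, combined with Chebyshev's inequality; and the definition of $\mathcal{N}_{[\,]}$ in the statement, which controls $\sum_i \E\sup_{f,g\in\cF_{\epsilon j}^n}|Z_{ni}(f)-Z_{ni}(g)|^2$ by $\epsilon^2$, provides exactly the variance bound needed to handle each cell.

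For each chain link, I would apply a Bernstein-type maximal inequality to the independent differences $Z_{ni}(\pi_k f) - Z_{ni}(\pi_{k-1} f)$ after truncating at a level $M_k$ whose tail contribution is dominated by condition (i). Taking a union over the $\le N_k N_{k-1}$ pairs of adjacent cells, the contribution of the $k$-th chain link is controlled, up to universal constants, by $\epsilon_k\sqrt{\log(N_k\vee N_{k-1})}$, so that
\[
\sum_{k\ge 1} \epsilon_k \sqrt{\log(N_k \vee N_{k-1})} \;\lesssim\; \int_0^{\epsilon}\sqrt{\log \mathcal{N}_{[\,]}(\eta,\cF,\|\cdot\|_n)}\,d\eta,
\]
which vanishes as $\epsilon \downarrow 0$ by condition (iii). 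The delicate step, and the main obstacle, is calibrating the truncation level $M_k$ used in Bernstein's inequality: the truncated tails must be dominated by condition (i) uniformly over $k$, while the retained portions must keep the sub-Gaussian exponent comparable to the bracket diameter $\epsilon_k$ so that the union bound over all $N_k N_{k-1}$ cells survives. Once this calibration is fixed, the equicontinuity bound follows from the chain inequality above, which proves (b) and hence asymptotic tightness in $\LL_\infty(\cF)$.
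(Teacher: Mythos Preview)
The paper does not prove this lemma at all: it is quoted verbatim as Theorem~2.11.9 of \cite{van1996weak} and used as a black box in the proof of Lemma~6.4, so there is no ``paper's own proof'' to compare your attempt against. Your sketch is a reasonable outline of the standard Ossiander-type bracketing chaining argument that underlies that theorem, and nothing in it is obviously wrong as a high-level plan; but since the authors simply cite the result, a proof was not expected here.
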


\begin{lemma}
	\label{lemma:Nj}
	Let $E_n=\big\{a_1 {n}/({2\prod_{k=1}^dJ_k})\leq  N_{\bj} \leq 2a_2{n}/({\prod_{k=1}^d J_k})$ for all $\bj\big\}$, where $a_1$ and $a_2$ are respectively lower and upper bounds of the density $g$. If $n/(\prod_{k=1}^d J_k)\gg \log(n)$,
	then $\P_0(E_n)\to 1$.
\end{lemma}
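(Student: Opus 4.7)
The plan is to recognize that $N_{\bj}$ is a sum of independent Bernoulli indicators, so standard Chernoff--Bernstein concentration together with a union bound over the $\prod_{k=1}^{d}J_k$ bins will suffice; the hypothesis $n/\prod_{k} J_k \gg \log n$ is precisely what is needed to absorb the union-bound loss. So the work will go as follows.

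First, I would observe that $N_{\bj}$ is binomially distributed with parameters $n$ and $p_{\bj}=\int_{I_{\bj}}g(\bx)\,d\bx$. Since each bin $I_{\bj}$ has Lebesgue volume $1/\prod_{k=1}^{d}J_k$ and $a_1 \le g \le a_2$ on $[0,1]^d$, the mean $\mu_{\bj}:= np_{\bj}$ satisfies
\[
\frac{a_1 n}{\prod_{k=1}^{d}J_k} \;\le\; \mu_{\bj}\;\le\; \frac{a_2 n}{\prod_{k=1}^{d}J_k}.
\]
Consequently the event $\{\mu_{\bj}/2 \le N_{\bj} \le 2\mu_{\bj} \text{ for all }\bj\}$ is contained in $E_n$, so it is enough to show that this deviation event has probability tending to one.

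Next I would apply Bernstein's inequality to the centered binomial $N_{\bj}-\mu_{\bj}$ (with $|\text{summand}|\le 1$ and variance $\le \mu_{\bj}$) to get, for a universal constant $c>0$,
\[
\P_0\bigl(|N_{\bj}-\mu_{\bj}|>\mu_{\bj}/2\bigr)\;\le\; 2\exp(-c\mu_{\bj})\;\le\; 2\exp\!\Bigl(-c a_1 n\bigl/\prod\nolimits_{k=1}^{d}J_k\Bigr).
\]
A union bound over $\bj\in[\bm{1}:\bJ]$ then yields
\[
\P_0(E_n^c)\;\le\; 2\prod_{k=1}^{d}J_k\cdot\exp\!\Bigl(-c a_1 n\bigl/\prod\nolimits_{k=1}^{d}J_k\Bigr)
\;=\; 2\exp\!\Bigl(\log\!\prod\nolimits_{k=1}^{d}J_k - c a_1 n\bigl/\prod\nolimits_{k=1}^{d}J_k\Bigr).
\]
Under the hypothesis $n/\prod_{k=1}^{d}J_k\gg \log n$, we may assume $\prod_k J_k\le n$ (else the hypothesis is vacuous for large $n$), so $\log\prod_k J_k\le \log n\ll n/\prod_k J_k$, and the exponent tends to $-\infty$. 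Hence $\P_0(E_n^c)\to 0$, giving the claim.

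There is no real obstacle here; the only point to be careful about is pairing the $\log\prod_k J_k$ factor from the union bound with the Bernstein exponent $c a_1 n/\prod_k J_k$, and this is precisely what the assumption $n/\prod_k J_k\gg\log n$ is designed to handle.
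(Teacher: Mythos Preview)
Your proof is correct and follows essentially the same approach as the paper: the paper simply refers to Lemma~A.2 of \cite{chakraborty2020coverage} (which is the same Bernstein-plus-union-bound argument) and notes exactly your key observation that $n/\prod_{k=1}^d J_k\gg \log n \gtrsim \log\bigl(\prod_{k=1}^d J_k\bigr)$.
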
 

\begin{proof}
	This can be shown with the same lines of argument of the proof of Lemma A.2 of \cite{chakraborty2020coverage} by replacing $J$ there with $\prod_{k=1}^d J_k$ and noting that $n/\prod_{k=1}^d
	J_k\gg \log(n) \gtrsim \log(\prod_{k=1}^d J_k)$.
\end{proof}

\begin{lemma}
	\label{lemma:sigmasq}
	Under the condition of Theorem 4.1, $\hat{\sigma}_n^2$ converges to $\sigma_0^2$ in probability at the rate $\max\{n^{-1/2}, n^{-1}\prod_{k=1}^d J_k, \max( J_k^{-1}: 1\le k \le d)\}$.
\end{lemma}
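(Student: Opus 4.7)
The plan is to first rewrite $\hat{\sigma}_n^2$ in a form that isolates the noise contribution from the step-function approximation error. Using the identity $\sum_{i:\bX_i\in I_{\bj}}(Y_i-\zeta_{\bj})^2 = \sum_{i:\bX_i\in I_{\bj}}(Y_i-\bar{Y}|_{I_{\bj}})^2 + N_{\bj}(\bar{Y}|_{I_{\bj}}-\zeta_{\bj})^2$ in each cell and combining with the subtracted regularization sum in \eqref{sighat mmle}, one obtains
\begin{align*}
\hat{\sigma}_n^2 \;=\; \frac{1}{n}\sum_{\bj\in[\mathbf{1}:\bJ]}\sum_{i:\bX_i\in I_{\bj}} \bigl(Y_i-\bar{Y}|_{I_{\bj}}\bigr)^2 \;+\; \frac{1}{n}\sum_{\bj\in[\mathbf{1}:\bJ]} \frac{N_{\bj}\lambda_{\bj}^{-2}}{N_{\bj}+\lambda_{\bj}^{-2}}\bigl(\bar{Y}|_{I_{\bj}}-\zeta_{\bj}\bigr)^2.
\end{align*}
The second term is the price of the prior: on the event $E_n$ of Lemma \ref{lemma:Nj} the factor $N_{\bj}\lambda_{\bj}^{-2}/(N_{\bj}+\lambda_{\bj}^{-2})\le \lambda_{\bj}^{-2}=O(1)$, and $\max_{\bj}|\bar{Y}|_{I_{\bj}}|+\max_{\bj}|\zeta_{\bj}|=O_{\P_0}(1)$; this yields a contribution of order $\prod_k J_k/n$.

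Next I would substitute $Y_i=f_0(\bX_i)+\varepsilon_i$ into the first sum and write $Y_i-\bar{Y}|_{I_{\bj}}=(f_0(\bX_i)-\overline{f_0(\bX)}|_{I_{\bj}})+(\varepsilon_i-\bar{\varepsilon}|_{I_{\bj}})$, so that the sum splits as $T_1+T_2+2T_3$, where
\begin{align*}
T_1 &= \frac{1}{n}\sum_{i=1}^n\varepsilon_i^2 - \frac{1}{n}\sum_{\bj}N_{\bj}\bar{\varepsilon}|_{I_{\bj}}^2, \qquad
T_2 = \frac{1}{n}\sum_{\bj}\sum_{i:\bX_i\in I_{\bj}}\bigl(f_0(\bX_i)-\overline{f_0(\bX)}|_{I_{\bj}}\bigr)^2,\\
T_3 &= \frac{1}{n}\sum_{\bj}\sum_{i:\bX_i\in I_{\bj}}\bigl(f_0(\bX_i)-\overline{f_0(\bX)}|_{I_{\bj}}\bigr)\bigl(\varepsilon_i-\bar{\varepsilon}|_{I_{\bj}}\bigr).
\end{align*}
For $T_1$, the usual law-of-large-numbers gives $n^{-1}\sum_i\varepsilon_i^2=\sigma_0^2+O_{\P_0}(n^{-1/2})$, while a conditional moment calculation using the finite fourth moment of $\varepsilon$ from Assumption~\ref{assumption_data} and the bound $N_{\bj}\asymp n/\prod_k J_k$ on $E_n$ shows $n^{-1}\sum_{\bj}N_{\bj}\bar{\varepsilon}|_{I_{\bj}}^2 = \sigma_0^2\prod_kJ_k/n + O_{\P_0}((\prod_kJ_k/n)\cdot(\prod_kJ_k/n)^{1/2})$, which is the source of the $n^{-1}\prod_kJ_k$ term in the rate. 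For the cross term $T_3$, I would condition on $(\bX_i)$ and use $\E(T_3\mid\bX_{1:n})=0$ together with $\mathrm{Var}(T_3\mid\bX_{1:n})\lesssim T_2\sigma_0^2/n$, so that $T_3$ is dominated by the geometric mean of $T_2$ and $n^{-1}$ and hence by the other two rates.

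The main obstacle will be controlling $T_2$, because $f_0$ is only assumed multivariate monotone (Assumption~\ref{assumption_approximation} is purely local at $\bx_0$). The plan here is to use that a bounded coordinatewise-nondecreasing function on $[0,1]^d$ has finite Hardy--Krause (i.e.\ slicewise) variation: along any axis-parallel slice the oscillation of $f_0$ telescopes to $f_0(\mathbf{1})-f_0(\mathbf{0})$, so that the within-cell oscillations satisfy $\mathrm{osc}(f_0,I_{\bj})\lesssim \max_k J_k^{-1}$ after isolating the dominant coordinate, giving $T_2 = O_{\P_0}(\max_kJ_k^{-1})$ by bounding $n^{-1}\sum_{\bj}N_{\bj}\mathrm{osc}(f_0,I_{\bj})^2$ through $\E[\mathrm{osc}(f_0,I_{\bj(\bX)})^2]$ and the density bound $g\le a_2$ from Assumption~\ref{assumption_data}. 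Assembling the estimates for $T_1,T_2,T_3$ with the regularization remainder yields the claimed rate $\max\{n^{-1/2},n^{-1}\prod_kJ_k,\max_kJ_k^{-1}\}$.
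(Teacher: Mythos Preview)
Your decomposition and the treatment of the regularization term, $T_1$, and $T_3$ match the paper's argument almost exactly (the paper just organizes the five pieces slightly differently, splitting the regularization term via $\bar Y=\overline{f_0(\bX)}+\bar\varepsilon$ rather than bundling it). The rates you assign to those pieces are correct.

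The gap is in your handling of $T_2$. The claim that a bounded coordinatewise nondecreasing function on $[0,1]^d$ has per-cell oscillation $\mathrm{osc}(f_0,I_{\bj})\lesssim\max_k J_k^{-1}$ is false: take $f_0(x_1,x_2)=\Ind\{x_1+x_2\ge 1\}$, which is in $\mathcal M$, yet every cell meeting the anti-diagonal has oscillation $1$ no matter how large $J_1,J_2$ are. (The related assertion about finite Hardy--Krause variation is likewise not available from monotonicity alone when $d\ge 2$.) What the telescoping along axis-parallel slices actually delivers is a bound on the \emph{sum} of oscillations, not on each one: writing $\mathrm{osc}(f_0,I_{\bj})=f_0(\bj/\bJ)-f_0((\bj-\mathbf 1)/\bJ)$ and telescoping coordinate by coordinate, one gets $\sum_{\bj}\mathrm{osc}(f_0,I_{\bj})\le \bigl(\sum_{k}\prod_{p\ne k}J_p\bigr)\,(f_0(\mathbf 1)-f_0(\mathbf 0))$. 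The paper then bounds
\[
T_2\;\le\;\frac{1}{n}\sum_{\bj}N_{\bj}\,\mathrm{osc}(f_0,I_{\bj})^2
\;\lesssim\;\frac{1}{\prod_k J_k}\,\Bigl(\max_{\bj}\mathrm{osc}(f_0,I_{\bj})\Bigr)\sum_{\bj}\mathrm{osc}(f_0,I_{\bj})
\;\lesssim\;\sum_k J_k^{-1}\;\lesssim\;\max_k J_k^{-1},
\]
using $N_{\bj}\asymp n/\prod_k J_k$ on $E_n$ and $\max_{\bj}\mathrm{osc}\le f_0(\mathbf 1)-f_0(\mathbf 0)$. So you need to pull one factor of the oscillation out as a uniform bound and sum the remaining factor via the chain/slice telescoping; a per-cell estimate is neither available nor needed.
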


\begin{proof}
	$|\hat{\sigma}^2_n - \sigma^2_0|$ is bounded by up to a constant multiple of
	\begin{equation}\label{formula:sigmasqdecomp}
	\begin{aligned}
	    &\big|\frac{1}{n}\sum_{i=1}^n \varepsilon^2_i - \sigma^2_0\big|
	    + \frac{1}{n}\sum_{i=1}^n(f_0(\bX_i)-\overline{f_0(\bX)}|_{I_{\bj}})^2\\
	    &+\frac{1}{n}\sum_{\bj\in[\bm{1}:\bm{J}]}\frac{\lambda_{\bj}^{-2}N_{\bj}}{N_{\bj}+\lambda_{\bj}^{-2}}(\overline{f_0(\bX)}|_{I_{\bj}} - \zeta_{\bj})^2
	    \\
	    &+\big|\frac{1}{n} \sum_{\bj\in[\bm{1}:\bm{J}]}\frac{N_{\bj}\bar{\varepsilon}|_{I_{\bj}}}{N_{\bj}+\lambda_{\bj}^{-2}}(\overline{f_0(\bX)}|_{I_{\bj}} - \zeta_{\bj})\big|
	     + \frac{1}{n} \sum_{\bj\in[\bm{1}:\bm{J}]}{N_{\bj}(\bar{\varepsilon}|_{I_{\bj}})^2}.
	\end{aligned}
	\end{equation}
	The first term of \eqref{formula:sigmasqdecomp} is $O_{\P_0}(n^{-1/2})$. By the monotonicity of $f_0$, the second term is bounded by $n^{-1}\sum_{\bj\in[\bm{1}:\bJ]}N_{\bj}(f_0(\bj/\bJ) - f_0((\bj-1)/\bJ))^2$. 
Because  $|f_0(\bx)|\le \max\{f_0(\bm{0}), f_0(\bm{1})\}$ for every $\bx\in[0,1]^d$, 
	on the event $E_n$ defined in Lemma \ref{lemma:Nj} with probability tending to $1$,  the last display is further bounded by a multiple of $\prod_{k=1}^d J_{k}^{-1}\cdot \sum_{\bj\in[\bm{1}:\bJ]} f_0(\bj/\bJ) - f_0((\bj-1)/\bJ)$.
	Note that $[\bm{1}:\bm{J}]$ can be partitioned into no more than $\sum_{k=1}^d\prod_{p\neq k} J_p$ subsets $\{A_q\}_q$, where each $A_q$ is the largest possible set such that for every pair of $\bj_1,\bj_2\in A_q$, there exists an integer $a$ such that $\bj_1 = \bj_2 + a\bm{1}$. Thus the expression is bounded by
	$$
	    \prod_{k=1}^d J_{k}^{-1} \sum_{k=1}^d\prod_{p\neq k} J_p |f_0(\bm{1})-f_0(\bm{0})|\lesssim \max\{ J_k^{-1}: 1\leq k \leq d\},
$$
	since $J_k\to \infty$ for every $k$. Hence, the second term in \eqref{formula:sigmasqdecomp} is $O_{\P_0}(\max_k J_k^{-1})$.
	On the event $E_n$, the third term is bounded by a constant multiple of $\prod_{k=1}^d J_k/n$
	since the hyperparameters, $\zeta_{\bj}$ and $\lambda_{\bj}$, and the regression function $f_0$ are bounded.
	Noting that Var$(\bar{\varepsilon}|_{I_{\bj}}|\bX)=\sigma_0^2/N_{\bj}$ and using Lemma \ref{lemma:Nj}, the fourth term is $O_{\P_0}(\prod_{k=1}^d J_k / n)$. 
	The expectation of the last term is $O(\prod_{k=1}^d J_k / n)$.
It follows that the last term is $O_{\P_0}(\prod_{k=1}^dJ_k/n)$. 
\end{proof}

\begin{lemma}
	\label{lemma:submartingale}
	Let $\cJ=\{\cJ_1\times\cdots\times \cJ_m \}\subseteq \NN^m$ and $\{S_{\bj}, \bj\in \cJ\}$ be a collection of random variables. Assume that for every $1\leq k\leq m$ and every $(j_1,\ldots,j_{k-1},j_{k+1},\ldots,j_m)$,  $\{S_{(j_1,\ldots,j_{k-1},j,j_{k+1},\ldots,j_m)} , \mathscr{F}_j^{(k)}, j\in\cJ_k\}$ is a martingale. Then for $p > 1$, we have that 
	\begin{align*}
	\P(\max_{\bj\in\cJ} |{S_{\bj}}| > \epsilon) &\leq \frac{(p/(p-1))^{p(m-1)}}{\epsilon^p}  \E|{S_{(n_1,\ldots,n_m)}}|^{p};\\
	\E(\max_{\bj\in\cJ} \abs{S_{\bj}}^p ) &\leq (p/(p-1))^{pm} \E|{S_{(n_1,\ldots,n_m)}}|^{p}.
	\end{align*}
\end{lemma}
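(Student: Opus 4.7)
My plan is to induct on $m$, iterating Doob's one-parameter maximal inequalities. Since $\{S_{\bj}\}$ is a martingale in each coordinate, Jensen's inequality gives that $\{|S_{\bj}|\}$ is a non-negative submartingale in each coordinate, so I will actually prove the slightly more general statement that for any array $\{X_{\bj}\}$ which is a non-negative submartingale in each coordinate,
\[
\E\max_{\bj\in\cJ}X_{\bj}^p\leq \big(p/(p-1)\big)^{pm}\E X^p_{(n_1,\ldots,n_m)},
\]
from which the $L^p$-type bound in the lemma follows upon setting $X_{\bj}=|S_{\bj}|$.

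For this $L^p$-bound I induct on $m$. The base case $m=1$ is the classical Doob $L^p$ maximal inequality. For the inductive step, define $Y_{(j_1,\ldots,j_{m-1})}:=\max_{j_m\in\cJ_m}X_{(j_1,\ldots,j_m)}$. The key observation is that $\{Y_{(j_1,\ldots,j_{m-1})}\}$ is itself a non-negative submartingale in each of the remaining $m-1$ coordinates: for any $k\leq m-1$, the elementary bound $\E[\max_i Z_i\mid \mathscr{G}]\geq \max_i\E[Z_i\mid \mathscr{G}]$ combined with the per-$j_m$ submartingale property in coordinate $k$ gives
\[
\E\big[Y_{(j_1,\ldots,j_{m-1})}\big|\mathscr{F}^{(k)}_{j_k-1}\big]\geq \max_{j_m}\E\big[X_{(j_1,\ldots,j_m)}\big|\mathscr{F}^{(k)}_{j_k-1}\big]\geq \max_{j_m}X_{(j_1,\ldots,j_k-1,\ldots,j_m)}=Y_{(j_1,\ldots,j_k-1,\ldots,j_{m-1})}.
\]
Applying the induction hypothesis to $Y$ in $m-1$ coordinates then gives $\E\max_{(j_1,\ldots,j_{m-1})}Y^p\leq (p/(p-1))^{p(m-1)}\E Y^p_{(n_1,\ldots,n_{m-1})}$. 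One further use of one-parameter Doob $L^p$ in the $m$-th coordinate (with the other indices at their terminal values) bounds $\E Y^p_{(n_1,\ldots,n_{m-1})}=\E\max_{j_m}X^p_{(n_1,\ldots,n_{m-1},j_m)}$ by $(p/(p-1))^{p}\E X^p_{(n_1,\ldots,n_m)}$. Combining these with the pointwise identity $\max_{\bj}X^p_{\bj}=\max_{(j_1,\ldots,j_{m-1})}Y^p_{(j_1,\ldots,j_{m-1})}$ yields the claimed constant $(p/(p-1))^{pm}$.

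For the tail bound (the first inequality), the improvement in the exponent from $pm$ to $p(m-1)$ comes from replacing one application of the $L^p$ maximal inequality (constant $(p/(p-1))^p$) by the classical Doob tail maximal inequality (constant $1$). Concretely, set $Z_{j_1}:=\max_{(j_2,\ldots,j_m)}|S_{(j_1,\ldots,j_m)}|$; by the same max-preserves-submartingale argument used above, $\{Z_{j_1}\}$ is a non-negative submartingale in $j_1$, and hence so is $\{Z_{j_1}^p\}$. Then the classical Doob tail inequality gives
\[
\P(\max_{\bj}|S_{\bj}|>\epsilon)=\P(\max_{j_1}Z_{j_1}^p>\epsilon^p)\leq \E Z_{n_1}^p/\epsilon^p,
\]
and applying the $L^p$ multi-parameter result just established to the $(m-1)$-parameter submartingale $\{|S_{(n_1,j_2,\ldots,j_m)}|\}_{(j_2,\ldots,j_m)}$ bounds $\E Z_{n_1}^p$ by $(p/(p-1))^{p(m-1)}\E|S_{(n_1,\ldots,n_m)}|^p$. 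The only technical subtlety is the preservation of the submartingale property after taking a max over one index; this step relies only on the coordinate-wise submartingale structure supplied by the hypothesis (with the remaining indices frozen), so no joint commutation (``F4''-type) condition on the filtrations is required.
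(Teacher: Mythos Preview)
Your argument is correct and rests on the same mechanism the paper uses: the observation that taking a coordinatewise maximum over some indices preserves the nonnegative-submartingale property in the remaining indices, after which Doob's one-parameter inequalities can be iterated. The paper's proof, adapted from Smythe (1974), packages the first step differently: it introduces sequential stopping times $\tau_1,\ldots,\tau_m$ and uses the submartingale property on the indicators $\Ind_{\{\bm{\tau}=\bj\}}$ to reach $\P(\max_{\bj}|S_{\bj}|>\epsilon)\le \epsilon^{-p}\E|S_{(n_1,\tau_2,\ldots,\tau_m)}|^p$, then invokes exactly your ``max preserves submartingale'' fact to strip off the remaining coordinates via Doob $L^p$. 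Your inductive formulation is more uniform and avoids the stopping-time bookkeeping; the paper's version makes the first tail step more explicit. Both routes yield the identical constants $(p/(p-1))^{p(m-1)}$ and $(p/(p-1))^{pm}$, and both rely only on the per-coordinate filtrations $\mathscr{F}^{(k)}_j$ being common across the frozen indices, so no joint commutation condition is needed in either proof.
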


\begin{proof} 
	The proof is adapted from Lemma 1 of \cite{smythe1974sums}.
	Without loss of generality, we assume that $\cJ_k= \{1, 2, \cdots, n_k\}$ for every $k\leq m$. Define
	\begin{gather*}
	\tau_1 = \inf\{t_1: \max \{\abs{S_{(t_1, j_2, \ldots, j_m)}}: j_k \in \cJ_k\text{ for }k\geq 2\} >\epsilon \},\\
	\tau_2 = \inf\{t_2: \max \{\abs{S_{(\tau_1, t_2, \ldots, j_m)}}: j_k \in \cJ_k\text{ for }k\geq 3\} >\epsilon \},\\
	\vdots \\
	\tau_m = \inf\{t_m: \max \{\abs{S_{(\tau_1, \ldots, \tau_{m-1}, j_m)}}: j_m \in \cJ_m\} >\epsilon \},
	\end{gather*}
	where $\inf \varnothing =\infty$ by convention. Then $\{ \max\{ \abs{S_{\bj}}: \bj\in\cJ\} > \epsilon \} = \cup_{\bj\in\cJ} \{\bm{\tau} = \bj\}$, where $\bm{\tau} = (\tau_1,\ldots,\tau_m)\trans$. 
	
	As $\{\abs{S_{(j,j_2,\ldots,j_m)}}^p , \mathscr{F}_j^{(1)}, j\in\cJ_1\}$ is a nonnegative submartingale, $\E(|{S_{\bj}}|^p \Ind_{\{\bm{\tau}=\bj\}})$ can be bounded by 
	\begin{align*}
	 \E( \E(\abs{S_{(n_1,j_2,\ldots,j_m)}}^p | \mathscr{F}_{j_1}^{(1)})\Ind_{\{\bm{\tau}=\bj\}})=\E(\abs{S_{(n_1,j_2,\ldots,j_m)}}^p\Ind_{\{\bm{\tau}=\bj\}}),
	\end{align*}
for every $\bj=(j_1,\ldots, j_m)\in\cJ$.
Hence it follows that 
	\begin{multline*}
	\P(\max_{\bj\in\cJ} \abs{S_{\bj}} > \epsilon) \leq \epsilon^{-p}\sum_{j\in\cJ}\E(\abs{S_{\bj}}^p \Ind_{\{\bm{\tau}=\bj\}}) \\ \leq \epsilon^{-p}\sum_{j\in\cJ}\E(\abs{S_{(n_1,\tau_2,\ldots,\tau_m)}}^p \Ind_{\{\bm{\tau}=\bj\}})
	\leq \epsilon^{-p} \E(\abs{S_{(n_1,\tau_2,\ldots,\tau_m)}}^p).
	\end{multline*}
	$\{\max\{\abs{S_{(n_1,j,j_3,\ldots,j_m)}}^p: j_k\in\cJ_k, k\geq 3\} , \mathscr{F}_j^{(2)}, j\in\cJ_2\}$ is a nonnegative submartingale since $\{\abs{S_{(n_1,j,j_3,\ldots,j_m)}}^p , \mathscr{F}_j^{(2)}, j\in\cJ_2\}$ is a nonnegative submartingale. Hence by Doob's inequality, 
	\begin{align*}
	\E(\abs{S_{(n_1,\tau_2,\ldots,\tau_m)}}^p) 
	&\leq \E(\max_{j\in \cJ_2}\{\max\{\abs{S_{(n_1,j,j_3,\ldots,j_m)}}^p: j_k\in\cJ_k, k\geq 3\}\}) \\
	&\leq \big(\frac{p}{p-1}\big)^p \E(\max\{|{S_{(n_1,n_2,j_3,\ldots,j_m)}}|^p: j_k\in\cJ_k, k\geq 3\}).
	\end{align*}
	Using Doob's inequality repeatedly in the subsequent coordinates, we have 
	\begin{align*}
	\E(\abs{S_{(n_1,\tau_2,\ldots,\tau_m)}}^p)
	\leq \big(\frac{p}{p-1}\big)^{p(m-1)} \E(|{S_{(n_1,\ldots,n_m)}}|^p),
	\end{align*}
	which gives the first inequality. The second one follows from the similar argument above. 
\end{proof}

\begin{lemma}[Lemma C.7 of \cite{han2020}]
	\label{lemma:delta}
	Let $g:[0,c]^d\times[0,c]^d \to \RR,\, g(\bu,\bv)=\prod_{k=1}^d(u_k+v_k)$. Then
	\begin{align*}
	\abs{g(\bu,\bv)-g(\bu',\bv')}\leq (2c)^d\sqrt{d}(\|\bu-\bu'\|+\|\bv-\bv'\|).
	\end{align*}
\end{lemma}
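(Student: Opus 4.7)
The plan is to reduce the estimate to a standard Lipschitz bound for a product of bounded reals. Set $w_k=u_k+v_k$ and $w'_k=u'_k+v'_k$, so that $g(\bu,\bv)=\prod_{k=1}^d w_k$ and analogously for $(\bu',\bv')$. Since each of $u_k,v_k,u'_k,v'_k$ lies in $[0,c]$, each $w_k$ and $w'_k$ lies in $[0,2c]$.

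Next I would expand the difference via the telescoping identity
\begin{equation*}
    \prod_{k=1}^d w_k - \prod_{k=1}^d w'_k
    \;=\; \sum_{j=1}^d\Bigl(\prod_{k<j} w'_k\Bigr)(w_j-w'_j)\Bigl(\prod_{k>j} w_k\Bigr).
\end{equation*}
Each surrounding product has $d-1$ factors, each bounded by $2c$, so the absolute value of each summand is at most $(2c)^{d-1}|w_j-w'_j|$. Combined with the triangle inequality $|w_j-w'_j|\le |u_j-u'_j|+|v_j-v'_j|$, this yields
\begin{equation*}
    |g(\bu,\bv)-g(\bu',\bv')|\;\le\;(2c)^{d-1}\sum_{j=1}^d\bigl(|u_j-u'_j|+|v_j-v'_j|\bigr).
\end{equation*}

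Finally, I apply the Cauchy--Schwarz inequality coordinatewise: $\sum_{j=1}^d |u_j-u'_j|\le \sqrt d\,\|\bu-\bu'\|$ and likewise for $\bv-\bv'$. Putting the pieces together gives
\begin{equation*}
    |g(\bu,\bv)-g(\bu',\bv')|\;\le\;(2c)^{d-1}\sqrt d\,\bigl(\|\bu-\bu'\|+\|\bv-\bv'\|\bigr),
\end{equation*}
which is in fact slightly sharper than, and therefore implies, the stated bound with constant $(2c)^d\sqrt d$. There is no real obstacle here; the only point requiring minor attention is tracking the constants through the telescoping step and confirming that all $w_k,w'_k$ belong to $[0,2c]$. (One could instead derive the same bound by integrating the gradient of $g$ along the segment joining $(\bu,\bv)$ to $(\bu',\bv')$, using that each of the $2d$ partial derivatives $\partial g/\partial u_j$ and $\partial g/\partial v_j$ equals $\prod_{k\ne j}(u_k+v_k)$ and is thus bounded by $(2c)^{d-1}$, but the telescoping argument is shorter and avoids invoking the mean value theorem.)
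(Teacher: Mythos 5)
Your argument is correct: the telescoping identity, the bound $w_k,w'_k\in[0,2c]$ on each factor, and the Cauchy--Schwarz step all check out, and they yield the Lipschitz bound with the (sharper) constant $(2c)^{d-1}\sqrt d$. Note that this paper does not prove the lemma at all --- it simply imports it as Lemma C.7 of \cite{han2020} --- so there is nothing in the text to compare against; your self-contained proof is of exactly the elementary type one would expect behind that citation (the gradient/mean-value variant you mention is the same computation in disguise). One small caveat: your closing claim that the bound with $(2c)^{d-1}$ ``therefore implies'' the stated bound with $(2c)^d$ uses $(2c)^{d-1}\le (2c)^d$, which requires $2c\ge 1$; for $c<1/2$ the stated inequality can actually fail (e.g.\ $d=1$, $\bu=\bv=0$, $\bu'=\bv'=c$ gives difference $2c$ versus bound $4c^2$). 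This is harmless here, since in the paper $c$ is a truncation constant that is taken large, but it is worth flagging that what you have proved is the corrected (and stronger) form of the statement rather than the literal one for all $c>0$.
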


\begin{lemma}[Lemma C.5 of \cite{han2020}]
	\label{lemma:c5}
	Let $\mathbb{G}_n$ be the empirical measure with respect to $G$.
	Under Assumption 1 and Assumption 2, 
	for some $c_0\geq 1$ and $\bm{l}\in L^{\ast}$, we can find a sequence $u_n\downarrow 0$ such that with probability at least $1-O(n^{-2})$, 
	\begin{multline*}
	    \sup_{\substack{c_0^{-1}\bm{1}\preceq \bu \preceq c_0\bm{1} \\ \bv \succeq \bm{0}}}
	    \left(\omega_n \prod_k(u_k^{l_k}\vee v_k^{l_k}) \right)^{-1} \\ \cdot \Big|\frac{\mathbb{G}_n(\bX-\bx_0)^{\bm{l}}\Ind_{[\bx_0 - \bu\circ \bm{r}_n, \bx_0 + \bv\circ \bm{r}_n]} }{\mathbb{G}_n \Ind_{[\bx_0 - \bu\circ \bm{r}_n, \bx_0 + \bv\circ \bm{r}_n]} }  - \frac{{G}_n(\bX-\bx_0)^{\bm{l}}\Ind_{[\bx_0 - \bu\circ \bm{r}_n, \bx_0 + \bv\circ \bm{r}_n]} }{{G}_n \Ind_{[\bx_0 - \bu\circ \bm{r}_n, \bx_0 + \bv\circ \bm{r}_n]} } \Big|,
	\end{multline*}
	is bounded from above by $u_n$.
\end{lemma}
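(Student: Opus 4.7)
The quantity to control is a scaled uniform deviation between the empirical and population conditional $\bm{l}$-th moments of $\bX-\bx_0$ over the shrinking box $B(\bu,\bv)=[\bx_0-\bu\circ\bm{r}_n,\bx_0+\bv\circ\bm{r}_n]$. Since $\bm{l}\in L^{\ast}$ we have $\sum_{k=1}^{s}l_k/\beta_k=1$, so a Taylor expansion of $g$ around $\bx_0$ shows that the population ratio $G[(\bX-\bx_0)^{\bm{l}}\Ind_{B}]/G\Ind_{B}$ is of order $\omega_n\prod_{k=1}^{s}(u_k^{l_k}\vee v_k^{l_k})$. The normalizer in the lemma is exactly calibrated to turn this into an $O(1)$ target, and the task becomes quantifying the uniform fluctuation of the empirical ratio around this target.

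The plan is to linearize via
\begin{align*}
\frac{\mathbb{G}_n h\Ind_B}{\mathbb{G}_n\Ind_B}-\frac{Gh\Ind_B}{G\Ind_B}=\frac{(\mathbb{G}_n-G)(h\Ind_B)}{\mathbb{G}_n\Ind_B}-\frac{Gh\Ind_B}{G\Ind_B}\cdot\frac{(\mathbb{G}_n-G)\Ind_B}{\mathbb{G}_n\Ind_B},
\end{align*}
with $h(\bx)=(\bx-\bx_0)^{\bm{l}}$. Because $u_k\in[c_0^{-1},c_0]$, the denominator admits $G\Ind_B\gtrsim\prod_{k=1}^{s}r_{n,k}(c_0^{-1}+v_k)\,D_s(\bu,\bv)$, and a Bernstein bound combined with a dyadic peeling in $\bv$ gives $\mathbb{G}_n\Ind_B\ge \tfrac12 G\Ind_B$ uniformly with probability $1-O(n^{-2})$. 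The two remaining stochastic terms are then attacked with Talagrand's inequality applied to the VC-subgraph class $\cF=\{h\Ind_{B(\bu,\bv)}\}$, whose envelope has magnitude $\prod_{k=1}^{s}r_{n,k}^{l_k}(u_k\vee v_k)^{l_k}$ and whose weak variance has magnitude $\prod_{k=1}^{s}r_{n,k}^{2l_k+1}(u_k+v_k)(u_k\vee v_k)^{2l_k}$.

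The heart of the argument is a dyadic peeling in $\bv$: split $\RR_{\ge 0}^{s}$ into shells $\{\bv:2^{j_k-1}<1\vee v_k\le 2^{j_k}\}$ indexed by $\bm{j}\in\ZZ_{\ge 0}^{s}$, and apply Talagrand shell by shell. On the $\bm{j}$-th shell, the standard deviation part of the Talagrand bound for $(\mathbb{G}_n-G)(h\Ind_B)$, divided by $G\Ind_B$ and by $\omega_n\prod_k(u_k^{l_k}\vee v_k^{l_k})$, telescopes to a geometrically decaying sequence in $|\bm{j}|$ after using $\sum_{k=1}^{s}l_k/\beta_k=1$; the envelope part contributes a term of smaller order under the assumed rates on $\bm{r}_n$. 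Summing the resulting sub-Gaussian tails over shells, combined with a polynomial-entropy chaining bound inside each shell, yields a uniform deviation bounded by any $u_n\downarrow 0$ with $nu_n^2/\log n\to\infty$, with exceedance probability $O(n^{-2})$.

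The principal obstacle is precisely the unboundedness of $\bv$: the envelope and variance of the indexing functions both grow with $\bv$, so a single application of Talagrand cannot handle the whole range. The peeling works only because the normalizer $\omega_n\prod_k(u_k^{l_k}\vee v_k^{l_k})$ matches exactly the per-shell scale produced by Talagrand; verifying this matching and checking that the shell-wise tails remain summable to $O(n^{-2})$ uniformly in $c_0$ is the delicate technical step, and is in essence a re-execution of the argument given for Lemma C.5 in Han (2021).
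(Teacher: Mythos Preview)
The paper does not prove this lemma: it is stated in the supplement as a direct quotation of Lemma~C.5 of \cite{han2020} and is invoked without further argument. There is therefore no ``paper's own proof'' to compare your proposal against.

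Your sketch---linearize the ratio, control the empirical denominator uniformly via Bernstein plus a dyadic peeling in $\bv$, then bound the two centered empirical-process terms shell by shell with Talagrand's inequality over the VC-subgraph class of weighted indicators, using that $\sum_{k=1}^s l_k/\beta_k=1$ makes the normalizer $\omega_n\prod_k(u_k^{l_k}\vee v_k^{l_k})$ match the per-shell standard deviation so that the shell sums are geometrically summable---is the standard route and, as you yourself note, is essentially a re-execution of Han's argument. As a high-level outline it is sound; the only point to flag is that you have silently read the population term as $G$ rather than the paper's $G_n$, which appears to be a typographical slip in the statement, so your interpretation is the intended one.
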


\begin{lemma}[Lemma C.6 of \cite{han2020}]\label{lemma:c6}
	Let $(a, b, \gamma)$ be such that $a>1$, $0<b<\gamma<b+(a+1)$. Let $R_{a,b,\gamma}(c)$ be defined as in Lemma 6.7 and $H_l(\bu,\bv)$ as in Theorem 4.1, for $l=1,2$. Then there exists some positive constant $C$ depending on $d,a$ and $\sigma_0$ such that for any $c>1$ and $l=1,2$,
	\begin{align*}
	    \E[\sup_{(\bm{u},\bm{v})\in R_{a,b,\gamma}}\abs{H_l(\bm{u},\bm{v})-H_l(\bu,\bv\Ind_{[s+1:d-1]})}] \leq C\sqrt{c^{as-\gamma-a\Ind\{s=d\}}\log c}.
	\end{align*}
\end{lemma}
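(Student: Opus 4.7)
The plan is to apply Dudley's entropy bound to the centered Gaussian increment process
\begin{equation*}
\Delta_l(\bu, \bv) := H_l(\bu, \bv) - H_l(\bu, \bv\,\Ind_{[s+1:d-1]}),
\end{equation*}
viewed as indexed by the product set $R_{a,b,\gamma}(c) \subset \RR^d_{\geq 0}\times\RR^d_{\geq 0}$. The first task is to read off the intrinsic $L^2$-pseudo-metric $\rho_\Delta$ of $\Delta_l$ from the covariance kernel (4.1): because that kernel factors into a sum-of-minima piece involving only the first $s$ coordinates times $D_s(\bu\wedge\bu',\bv\wedge\bv')$, which depends only on coordinates $\geq s+1$, the increments decouple across these two blocks. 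Using the product structure together with the constraints $u_k\leq c^a$, $v_k\leq c$ for $k\leq s$ and $u_d\leq c^{-b}$, $v_d\leq c^{-\gamma}$, a telescoping expansion should give
\begin{equation*}
\sigma_\Delta^2 := \sup_{(\bu,\bv)\in R_{a,b,\gamma}(c)} \mathrm{Var}(\Delta_l(\bu,\bv)) \lesssim c^{as-\gamma-a\Ind\{s=d\}}.
\end{equation*}
The $c^{as}$ factor arises from the product of the first $s$ coordinates of $\bu$; the $c^{-\gamma}$ from $v_d$ entering either through the size of the integration region in $D_s$ (when $s<d$) or directly through the kernel (when $s=d$); and the $c^{-a}$ correction when $s=d$ reflects that $u_d$ is then restricted to $[0,c^{-b}]$ rather than $[0,c^a]$ inside $\prod_{k=1}^d u_k$.

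Next, I would estimate the covering number $N(R_{a,b,\gamma}(c),\rho_\Delta,\epsilon)$. The modulus bound of Lemma~\ref{lemma:delta} transfers from the kernel to $\rho_\Delta$, and since $R_{a,b,\gamma}(c)$ is a product of intervals with side-lengths polynomial in $c$, a coordinate-wise grid discretization with mesh matched to this modulus yields $\log N(R_{a,b,\gamma}(c),\rho_\Delta,\epsilon) \lesssim \log(c/\epsilon)$, uniformly in $\epsilon\in(0,\sigma_\Delta]$. Dudley's theorem then gives
\begin{equation*}
\E\Bigl[\sup_{(\bu,\bv)\in R_{a,b,\gamma}(c)} |\Delta_l(\bu,\bv)|\Bigr] \lesssim \int_0^{\sigma_\Delta}\sqrt{\log N(R_{a,b,\gamma}(c),\rho_\Delta,\epsilon)}\,d\epsilon \lesssim \sigma_\Delta \sqrt{\log c},
\end{equation*}
which, combined with the variance bound, produces the claimed estimate $C\sqrt{c^{as-\gamma-a\Ind\{s=d\}}\log c}$.

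The main obstacle is the variance estimate itself. A direct computation from the kernel yields
\begin{equation*}
\mathrm{Var}(\Delta_l(\bu,\bv)) = \Bigl[\prod_{k=1}^{s}(u_k+v_k)\Bigr]D_s(\bu,\bv) - \Bigl[\prod_{k=1}^{s}u_k\Bigr]D_s(\bu,\bv\,\Ind_{[s+1:d-1]}),
\end{equation*}
which splits into a clean piece $\prod u_k\cdot|D_s(\bu,\bv)-D_s(\bu,\bv\,\Ind_{[s+1:d-1]})| \lesssim c^{as}\cdot v_d \lesssim c^{as-\gamma}$ plus an error of the form $\{\prod(u_k+v_k)-\prod u_k\}\,D_s(\bu,\bv\,\Ind_{[s+1:d-1]})$; controlling this error against $c^{as-\gamma}$ is precisely what uses the hypothesis $\gamma<b+(a+1)$ together with $D_s(\bu,\bv\,\Ind_{[s+1:d-1]}) \lesssim u_d \leq c^{-b}$. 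The regular case $s=d$ must be treated separately, since there $D_s$ collapses to the constant $g(\bx_0)$ and $\bv\,\Ind_{[s+1:d-1]}$ becomes the zero vector, so the entire variance is carried by $\prod(u_k+v_k)-\prod u_k$, and the additional $-a$ in the exponent must be extracted from the expansion of this product with $u_d\leq c^{-b}$.
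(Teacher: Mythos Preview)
The paper does not give its own proof of this lemma; it simply records the statement and cites Lemma~C.6 of Han (2020). So there is no proof in the paper to compare your approach against.

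Your Dudley/entropy strategy is the right one and the variance formula
\[
\mathrm{Var}(\Delta_l(\bu,\bv))=\prod_{k=1}^{s}(u_k+v_k)\,D_s(\bu,\bv)-\prod_{k=1}^{s}u_k\,D_s(\bu,\bv\,\Ind_{[s+1:d-1]})
\]
is correct, as is the entropy estimate $\log N\lesssim_d \log(c/\epsilon)$ for a product of intervals under the intrinsic metric. The one concrete gap is in the last paragraph. A telescoping bound on $\prod_{k=1}^{s}(u_k+v_k)-\prod_{k=1}^{s}u_k$ gives at most a constant times $c\cdot c^{a(s-1)}=c^{as-(a-1)}$ (the dominant term replaces one factor $u_j\le c^{a}$ by $v_j\le c$), so the ``error'' piece is
\[
\Bigl\{\prod_{k=1}^{s}(u_k+v_k)-\prod_{k=1}^{s}u_k\Bigr\}D_s(\bu,\bv\,\Ind_{[s+1:d-1]})
\ \lesssim\ c^{as-(a-1)}\cdot c^{-b}=c^{as-(a+b-1)}.
\]
For this to be $\le c^{as-\gamma}$ you need $\gamma\le a+b-1$, not merely $\gamma<a+b+1$ as you claim. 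The same obstruction appears when $s=d$: the term $v_j\prod_{k\neq j}u_k$ with $j<d$ is at most $c\cdot c^{a(d-2)}\cdot c^{-b}=c^{ad-2a+1-b}$, and comparing with the target $c^{ad-a-\gamma}$ again forces $\gamma\le a+b-1$.

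In the paper's only use of this lemma (the proof of Lemma~6.7) one takes $a=3$ and $\gamma=b+1$, so $\gamma\le a+b-1$ holds comfortably and your argument goes through there. But as written your decomposition does not cover the full range $a+b-1<\gamma<a+b+1$ allowed by the stated hypothesis. Either the hypothesis $\gamma<b+(a+1)$ is a transcription slip for $\gamma<b+(a-1)$, or Han's original proof uses a sharper chaining than the two-term splitting you propose; you should check the source to see which.
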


\begin{lemma}[Lemma C.8 of \cite{han2020}]\label{lemma:c8}
	Let $H_l(\bu,\bv)$  be as in Theorem 4.1, for $l=1,2$. Then for any $\epsilon>0$, there exists $\rho_{\epsilon}>0$ such that
	\begin{align*}
	    \P \big(\min_{\substack{0\leq v_k \leq \Ind_{s+1\leq k\leq d}\\ v_d =0}} \max_{\substack{0\leq u_k \leq  \Ind_{1\leq k\leq d}}} H_{l}(\bm{u},\bm{v})\ge \rho_{\epsilon} \big)\ge 1-\epsilon.
	\end{align*}
\end{lemma}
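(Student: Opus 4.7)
The plan is to show that $Z := \min_{\bv \in K_v} \max_{\bu \in [0,1]^d} H_l(\bu,\bv)$ is strictly positive almost surely, where $K_v$ denotes the slice of $\bv$'s admitted by the statement (namely, $v_k=0$ for $k \le s$ or $k=d$, and $v_k \in [0,1]$ otherwise); the existence of $\rho_\epsilon > 0$ then follows immediately from continuity at $0$ of the distribution function of $Z$. First I would establish continuity of the sample paths: the covariance kernel \eqref{H covariance} is a product of jointly continuous factors in $(\bu,\bv)$, so a Kolmogorov-Chentsov-type moment bound furnishes a version of $H_l$ with a.s.\ continuous sample paths on the compact set $[0,1]^d \times K_v$. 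Consequently $M(\bv) := \max_{\bu \in [0,1]^d} H_l(\bu,\bv)$ is continuous on $K_v$ and the infimum defining $Z$ is attained.

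Next I would establish pointwise positivity of $M(\bv)$ at each $\bv \in K_v$. Since $v_k = 0$ for $k \le s$, the variance of $H_l(\bm{0}, \bv)$ equals $\prod_{k=1}^s v_k \cdot D_s(\bm{0},\bv) = 0$, so $H_l(\bm{0},\bv) = 0$ a.s.\ and thus $M(\bv) \ge 0$. For strict positivity I would restrict to the diagonal direction $t \mapsto H_l(t\mathbf{1},\bv)$, whose covariance $(t\wedge t')^s D_s((t\wedge t')\mathbf{1}, \bv)$ locally resembles that of an $s$-fold rescaled Brownian sheet issued from $0$; by a Brownian-sheet-type oscillation (law-of-iterated-logarithm) argument around $t=0$, this one-parameter Gaussian process strictly exceeds $0$ in every right-neighborhood of $0$ a.s., giving $M(\bv) > 0$ a.s.

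The main obstacle is upgrading the pointwise a.s.\ positivity to $Z > 0$ a.s., since the $\bv$-dependent exceptional null sets from the previous step could in principle union up to have positive probability. I would handle this by (i) Sudakov minoration to get $\E M(\bv) \ge c > 0$ uniformly in $\bv \in K_v$, using that the $L^2$-diameter of $\{H_l(\bu,\bv):\bu \in [0,1]^d\}$ is uniformly bounded below because $\mathrm{Var}(H_l(\mathbf{1},\bv)) = D_s(\mathbf{1},\bv) \ge D_s(\mathbf{1},\mathbf{0}) > 0$ by continuity and positivity of $g$ near $\bx_0$; (ii) Borell-TIS concentration, which yields $\P(M(\bv) \ge \tfrac12 \E M(\bv)) \ge 1 - e^{-c'}$ for each fixed $\bv$; and (iii) a finite covering of $K_v$ combined with a chaining bound on the modulus of continuity of the induced Gaussian-like process $\bv \mapsto M(\bv)$, to pass from a union bound over grid points up to uniform control on $K_v$. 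Calibrating the grid cardinality against the concentration tail so that the total failure probability stays below $\epsilon$ is the delicate step, but this is the standard chaining route for Gaussian suprema and delivers $\P(Z \ge \rho_\epsilon) \ge 1 - \epsilon$ for $\rho_\epsilon$ chosen small enough.
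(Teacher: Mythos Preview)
The paper itself does not prove this lemma; it is quoted verbatim as Lemma~C.8 of \cite{han2020}, so there is no in-paper argument to compare your sketch against. That said, your outline has a genuine gap at step~(iii).

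Borell--TIS in step~(ii) gives $\P(M(\bv) < \tfrac12\E M(\bv)) \le e^{-c'}$ with $c' = (\E M(\bv))^2/(8\sigma_\bv^2)$, and both $\E M(\bv)$ and $\sigma_\bv^2 = \sup_{\bu}\mathrm{Var}\,H_l(\bu,\bv)$ are \emph{fixed}, bounded quantities determined by the covariance. Hence $e^{-c'}$ is a fixed positive number; it does not shrink as you vary $\rho_\epsilon$ or the grid. Your step~(iii) needs $N_\delta e^{-c'} + \P(\text{modulus}>\rho)\le\epsilon$, but as $\delta\to 0$ you have $N_\delta\to\infty$ while $e^{-c'}$ stays put, and for $\delta$ bounded away from~$0$ the modulus term does not vanish. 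There is no choice of $\delta$ that makes this sum small for arbitrary~$\epsilon$, so the ``calibration'' you invoke cannot close as written. The phrase ``standard chaining route for Gaussian suprema'' is misleading here: chaining gives upper bounds on $\E\sup$, whereas you need a lower-tail bound on $\inf_\bv\sup_\bu$, which is a different object.

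What actually works is to replace (ii) by a \emph{uniform small-ball estimate}: for $\bv\in K_v$ the restriction $u_1\mapsto H_l((u_1,\mathbf{1}_{d-1}),\bv)$ is a time-changed Brownian motion with time change bounded below (since $D_s(\mathbf{1},\bv)\ge D_s(\mathbf{1},\bm{0})>0$), so $\sup_\bv \P(M(\bv)<\rho)\le \P(\sup_{t\in[0,1]}B(t)<\rho/c_0)=2\Phi(\rho/c_0)-1\lesssim\rho$, which \emph{does} go to~$0$. Combining this polynomial small-ball rate with a chaining bound on the modulus of $(\bu,\bv)\mapsto H_l(\bu,\bv)$ (the intrinsic metric is H\"older in $\bv$, so the required grid cardinality grows only polynomially in $1/\rho$) lets the union bound go through. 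Alternatively, one can argue via the Brownian-sheet representation of $H_l$ restricted to suitable slices to obtain $\P(Z>0)=1$ directly; this is closer in spirit to the argument in \cite{han2020}.
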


\end{document}